\numberwithin{equation}{section}
\newtheorem{thma}{Theorem}[section]
\newtheorem{lemma}[thma]{Lemma}
\newtheorem{coro}{Corollary}
\newtheorem{prop}[thma]{Propostion}
\renewcommand{\t}{\theta}
\newcommand{\g}{{\gamma}}
\renewcommand{\a}{{\alpha}}
\newcommand{\df}[1]{\displaystyle{#1}}
\newcommand{\E}{{\mathbb{E}}}
\renewcommand{\P}{{\mathbb{P}}}
\newcommand{\R}{{\mathbb{R}}}
\newcommand{\Z}{{\mathbb{Z}}}
\newcommand{\der}[1]{\frac{\partial}{\partial #1}}
\newcommand{\half}{{\frac{1}{ 2}}}
\newcommand{\e}{{\epsilon}}
\newcommand{\st}{{\mbox{ such that }}}
\renewcommand{\half}{{\frac{1}{2}}}
\newcommand{\critical}{{\frac{1-\sqrt{1-x^2}}{x^2}}}
\newcommand{\criticall}{{\frac{-1+\sqrt{1+x^2}}{x^2}}}
\newcommand{\Pf}{{\rm{Pf}}}
\renewcommand{\(}{\begin{equation}}
\renewcommand{\)}{\end{equation}}
\begin{document}

\title{Particle Systems arising from an anti-ferromagnetic Ising model}
\author{Sunil Chhita}
\address{KTH, Stockholm, Sweden}
\email{chhita@math.kth.se}
\begin{abstract}
We study a low temperature anisotropic anti-ferromagnetic 2D Ising model through the guise of a certain dimer model.  This model has a bijection with a one-dimensional particle system equipped with creation and annihilation.  In the thermodynamic limit, we determine the explicit phase diagrams as functions of temperature and anisotropy.  Two values of the anisotropy are of particular interest - the `critical' value and the `independent' value.  At independence, the particle system has the same distribution as the two colored noisy voter model.  Its limiting measure under a natural scaling window is the Continuum Noisy Voter Model.  At criticality, the distribution of particles on a given horizontal line, is a Pfaffian point process whose kernel in the scaling window can be written explicitly in terms of Bessel functions.  
\end{abstract}

\keywords{Dimer Model, Antiferromagnetic Ising Model, Noisy Voter Model, Continuum Noisy Voter Model, Scaling Windows, Pfaffian Point process}

\maketitle

\section{Introduction}

\subsection{History}

The dimer model is a two-dimensional exactly solvable model, which can be described in the following manner.   A dimer configuration of a planar graph is a subset of edges in which each vertex is covered exactly once by an edge.   The dimer model is a probability measure on the set of all dimer configurations of the underlying planar graph.  Weights can be applied to the edges so that specific edges have higher or lower probabilities of being observed.

The dimer model was initially introduced in \cite{Kas:61}.  \cite{Fis:66} noticed the correspondence between the dimer model and the two-dimensional Ising Model.  Since its introduction, the dimer model has been studied extensively and a vast variety of techniques have been developed using many areas of mathematics.  Much of this work was completed by R. Kenyon and A. Okounkov during the late 90's and 00's (for an excellent survey of their body of results, see \cite{Ken:09}).  The dimer model is a rather unique statistical mechanical model - not only can the partition function be easily computed but by \cite{Ken:97} also local statistics can be computed.  

Dimer models on non-bipartite lattices are fundamentally different from dimer models on bipartite lattices.  For instance, we can define a height function if the underlying lattice is bipartite (see \cite{Ken:01}) which leads to connections with random surfaces (\cite{she:05}).  Recently, there has been renewed interest in the study of dimer coverings on non-bipartite lattices.  In particular, \cite{bou:08,Li:10,Li1:10} have shown that some but not all of the machinery for the dimer model on bipartite lattices transfers to dimer models on certain non-bipartite lattices. 

In this paper, we study a one-dimensional particle system which comes from a dimer model on a certain non-bipartite dimer model.  Studying one-dimensional particle systems using dimer models is not an uncommon idea. Indeed,  \cite{BOU:07} studied non-intersecting lattice paths from a dimer model on the {\it honeycomb lattice} while \cite{Mat:03} studied systems of annihilating random walks using the dimer model.

\subsection{The Dimer model on the Fisher lattice}

The so-called {\it Fisher lattice} is the graph obtained from a honeycomb lattice with a triangle decoration assigned to each vertex as depicted on the left-handside of Figure \ref{dimer:lattice}.   This type of graph and more general graphs  have been used to study the Ising model using dimer model techniques through the so-called {\it Fisher correspondence} which was first introduced in \cite{Fis:66}.

\begin{figure}[h!]
\begin{align*}
\begin{array}{cc}
{\includegraphics[height=8cm,width=8cm]{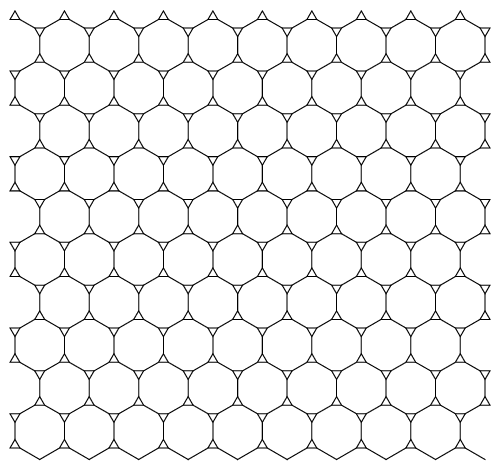}}
&
{\includegraphics[height=8cm,width=5cm]{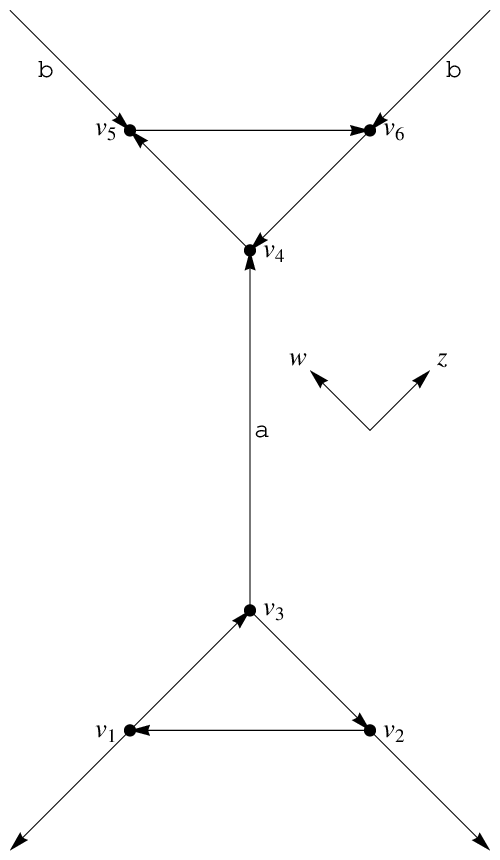}}
\end{array}
\end{align*}
\caption{The left-handside shows the Fisher lattice.  The right-handside shows the fundamental domain of the Fisher lattice studied in this paper, with its edge-weights, Kasteleyn orientation and vertex labels.  The edge from $v_1$ to $v_6$ crosses the curve $\g_1$ and the edge from $v_2$ to $v_5$ crosses the  curve $\g_2$ where $\g_1$ and $\g_2$ are curves on the dual of $G_1$ with $\g_1$ parallel to the vector $(-1,1)$ and $\g_2$ parallel to the vector $(1,1)$. }
\label{dimer:lattice}
\end{figure}

All vertical edges are referred to as  {\bf a} edges.  All edges outside the triangle decoration which are not vertical are referred to as {\bf b} edges.  We give weight $a$ to the {\bf a} edges, weight $b$ to the {\bf b} edges and weight 1 to the remaining edges.

Let $G$ denote the Fisher lattice on the plane.  Define the toroidal graph $G_n$ to be the quotient $G \slash n \mathbb{Z}^2$ where the action $n \mathbb{Z}^2$ are translations by $(n,0)$ and $(0,n)$ (horizontal and vertical translations).  Let $\mathcal{M}(G_n)$ denote the set of all dimer coverings of $G_n$.  We can define a probability measure $\mu_n$ for $M \in \mathcal{M}(G_n)$ so that the probability of $M$ is proportional to the product of all the edge weights; that is,
\(
	\mu_n(M)=\frac{a^{N_a^n} b^{N_b^n}}{Z(G_n)} \nonumber
\)
where $N_a^n=N_a^n(M)$ and $N_b^n=N_b^n(M)$ are the number of dimers covering {\bf a} and  {\bf b} edges for the dimer covering  $M$ and,  $Z(G_n)$ is the partition function: $Z(G_n)=\sum_{M \in \mathcal{M}(G_n)} a^{N_a} b^{N_b}$.  

\subsection{Anti-ferromagnetic Ising model and Setup}

Let $\Lambda_n$ represent the faces of the dodecagons in $G_n$.  Let $\Omega:= \{-1,+1 \}^{\Lambda_n}$ and let $\sigma_x \in \{-1,1 \}$ denote the spin at $x \in \Lambda_n$.   For $\sigma \in \Omega$, define the Hamiltonian by
\(  \label{intro:isinghamiltonian}
	H(\sigma)= -\sum_{v \in \Lambda_n}\sum_{v \sim w}J_{v,w}  \frac{ \sigma_v\sigma_w +1}{2}  
\)
where $v \sim w$ means that $v$ and $w$ are nearest neighbors and $J_{v,w}$ is the {\it interaction} term between the faces $v$ and $w$. For $\sigma \in \Omega$, define the probability measure,
\(\nonumber
		\nu_n (\sigma)= \frac{e^{-H(\sigma)}}{Z(\Lambda_n)}	
\)
where $Z(\Lambda_n)$ is the partition function: $Z(\Lambda_n)= \sum_{\sigma \in \Omega} e^{-H(\sigma)}$.  The above probability measure is called the \emph{Ising model}. 

The correspondence between the Ising model and dimer model can be seen in Figure \ref{intro:isingcorresp} and is as follows.  Faces sharing an edge have the same spin if there is a dimer covering the shared edge.  Otherwise, the faces have opposite spins.  
\(\nonumber
	J_{v,w}= \left\{ \begin{array} {l l}
			\log a & w \sim v \mbox{ and }wv  \mbox{  is an {\bf a} edge}\\
			\log b & w \sim v  \mbox{ and }wv \mbox{ is   a {\bf b} edge},\end{array} \right.
\)
where $wv$ is the shared edge (on the Fisher lattice) between the faces $w$ and $v$.  As there are two possibilities for the spin configurations for each dimer covering, we have
\begin{lemma}
\(\nonumber
Z(G_n)=2Z(\Lambda_n).
\)
\end{lemma}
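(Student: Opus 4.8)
The plan is to make Fisher's one-to-two correspondence explicit as a weight-preserving map $\Phi\colon\Omega\to\mathcal{M}(G_n)$ that is two-to-one and onto, and then sum Boltzmann weights. Given $\sigma\in\Omega$, I would decree that the edge of $G_n$ separating two adjacent dodecahedron faces $v\sim w$ is covered by a dimer exactly when $\sigma_v=\sigma_w$; each such edge is an {\bf a} or {\bf b} edge, and conversely every {\bf a}/{\bf b} edge separates two dodecahedra. The crucial local fact is a parity count: among any three spins in $\{-1,+1\}^3$ the number of agreeing pairs is $1$ or $3$, and the three dodecahedra meeting at a given triangular decoration of $G_n$ are pairwise adjacent, so an odd number of the three incident {\bf a}/{\bf b} edges get declared covered. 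Hence each triangle of $G_n$ has either none or exactly two of its three corners still uncovered, and in each case there is a unique way to extend the partial matching inside the triangle using its weight-$1$ edges. This defines $\Phi(\sigma)\in\mathcal{M}(G_n)$, and plainly $\Phi(\sigma)=\Phi(-\sigma)$.

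Next I would show $\Phi$ is two-to-one onto $\mathcal{M}(G_n)$. Given $M\in\mathcal{M}(G_n)$, one reads off a relative spin for each pair of adjacent faces ($+$ if the shared {\bf a}/{\bf b} edge lies in $M$, $-$ otherwise). Since a perfect matching covers the three corners of each triangular decoration, an odd number of the incident {\bf a}/{\bf b} edges lie in $M$ (the same parity count, run the other way), so these relative spins are consistent around every contractible loop of faces. Once consistency is known, choosing the spin of one reference face determines a $\sigma$ with $\Phi(\sigma)=M$, and the opposite choice gives $-\sigma$; this yields surjectivity and shows each fibre of $\Phi$ is a pair $\{\sigma,-\sigma\}$.

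With the correspondence in hand I would match weights. In $H(\sigma)$ a pair $v\sim w$ contributes only when $\sigma_v=\sigma_w$, i.e. only over the {\bf a}/{\bf b} edges of $M=\Phi(\sigma)$, contributing $\log a$ or $\log b$; the weight-$1$ edges of $M$ lie inside triangles and do not appear in $H$. Hence $e^{-H(\sigma)}=a^{N_a(M)}b^{N_b(M)}$, and summing over $\Omega$ using that $\Phi$ is weight-preserving and two-to-one onto $\mathcal{M}(G_n)$ gives
\[
Z(\Lambda_n)=\sum_{\sigma\in\Omega}e^{-H(\sigma)}=\sum_{M\in\mathcal{M}(G_n)}2\,a^{N_a(M)}b^{N_b(M)}=2Z(G_n).
\]

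The step I expect to require the most care is the consistency claim on the torus $G_n$: reducing ``consistent around every loop'' to the triangular faces disposes of all contractible loops, but one must still control how the relative spins induced by a matching behave around the two non-contractible cycles of $G_n$, which is the only point in the argument that is not purely local. I would handle this exactly as in Fisher's original construction in \cite{Fis:66} --- this is what the ``it is clear'' is standing in for --- and note that in any case it only affects the correspondence through boundary terms that are irrelevant to the thermodynamic quantities used in the sequel.
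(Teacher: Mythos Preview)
Your argument is exactly the Fisher correspondence the paper invokes, spelled out in full; the paper offers no proof beyond ``It is clear.'' Your parity observation at each triangle (an odd number of the three incident {\bf a}/{\bf b} edges must be covered) and the resulting unique extension inside the decoration are correct, as is the weight-matching $e^{-H(\sigma)}=a^{N_a(M)}b^{N_b(M)}$.

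You are right to flag the torus consistency as the delicate step, and in fact the issue is genuine rather than merely technical: on $G_n$ the map $\Phi$ is two-to-one onto its image but \emph{not} onto all of $\mathcal{M}(G_n)$. The domain walls of an arbitrary matching (the {\bf a}/{\bf b} edges not in $M$) form an even subgraph of the underlying honeycomb, but only the null-homologous ones bound a two-coloring of the dodecahedral faces; matchings in the three nontrivial $\mathbb{Z}_2$-homology classes have no preimage under $\Phi$. So the identity $2Z(G_n)=Z(\Lambda_n)$ is not literally exact on the torus --- the paper is being informal here --- but, as your final caveat correctly anticipates, the discrepancy is sub-exponential in the volume and vanishes in the thermodynamic limit, which is all the sequel ever uses.
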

\begin{figure}
\begin{center}
\epsfsize220pt
$$\scalebox{1}{\epsfbox{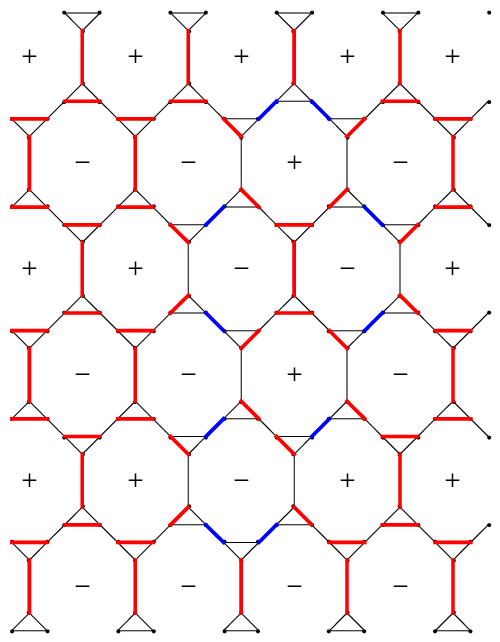}}$$
\caption{The correspondence between the Ising model and dimer coverings on the Fisher lattice. The blue edges represent dimers covering {\bf b} edges. The red edges are dimers covering the decoration and {\bf a} edges.  The `+' and `-' are spins in the corresponding Ising model.}
\label{intro:isingcorresp}
\end{center}
\end{figure}

We choose $a=x$ and $b=u x$ where $x<1$ and $u \in (0,1)$.  As we have $u<1/x$, then $J_{v,w}<0$ for $v \sim w$. This means that  adjacent sites have a high probability of having opposite spins.  This is called an {\it anti-ferromagnetic} interaction.  Furthermore, $J_{v,w}$ depends on the direction of the edge between the sites $v$ and $w$.  This is called an {\it anisotropic} interaction.   The {\it temperature} of the Ising model is equal to $1/\log(1/x)$ and the {\it anisotropy} is equal to $\log(1/u)/\log(1/x)$.  For fixed values of $x$, the parameter $u$ controls the anisotropy. 

Finally, we describe the concept of {\it thermodynamic limit}.  As previously mentioned, dimer coverings on the Fisher lattice with weights $a=x$ and $b=u x$ embedded on the torus $G_n$ give a probability measure $\mu_n (u,x)$.  The {\it thermodynamic limit} is the measure, $\mu^{u,x}=\lim_{n \rightarrow \infty} \mu_n(u,x)$, i.e. the limiting measure obtained when the system size is sent to infinity.  This measure is a {\it Gibbs} measure.  This limiting measure was shown to exist for a certain class of non-bipartite graphs in \cite{bou:08}.  Recently, \cite{Li1:10,Li:10} showed that this measure exists for the dimer model on the Fisher lattice with any choice of weights which is the case required for this paper.

\subsection{The particle model}

By considering the horizontal axis as space and the vertical axis as time, the dimer model, $\mu^{u,x}$, is in bijection the space-time diagrams of a one-dimensional particle system called the {\it particle model}.    This  is a probability measure on the set of all possible loops and rays with the same starting or finishing points on the diagonal grid.    

 An example of the correspondence can be seen in Figure \ref{intro:pathcorr}.  Each realization of the particle model is obtained from a dimer covering of the Fisher lattice by collapsing the {\bf a} edges and the triangle decoration.  This leaves a diagonal grid, where each dimer covering a {\bf b} edge in the dimer model can be thought of as a trajectory of a particle in the particle model.  By the structure of the underlying dimer model, particles can be created or annihilated in pairs.   The above construction shows that  the particle model is in bijection with the dimer model of the Fisher lattice. Figure \ref{intro:pathcorr} gives a pedagogical example of a dimer covering of the Fisher lattice along with the  corresponding particle trajectories.

The particle model has the property that particles can be created and annihilated in pairs.  We use the terminology creations and  annihilations to denote their corresponding dimer configurations which are defined as follows: a \emph{creation} is a local configuration of dimers covering two adjacent {\bf b} edges which are incident to the same inverted triangle decoration (i.e. a decoration of the form $\bigtriangledown$).  Conversely, if two adjacent {\bf b} edges are incident to the same oppositely orientated triangle decoration (i.e. a decoration of the form $\bigtriangleup$), then we call that local configuration of dimers an \emph{annihilation}.  Figure \ref{intro:pathcorr} includes an example of a creation and an annihilation.

\begin{figure}
\begin{align*}
\epsfsize220pt
\begin{array}{cc}
\scalebox{0.8}{\includegraphics{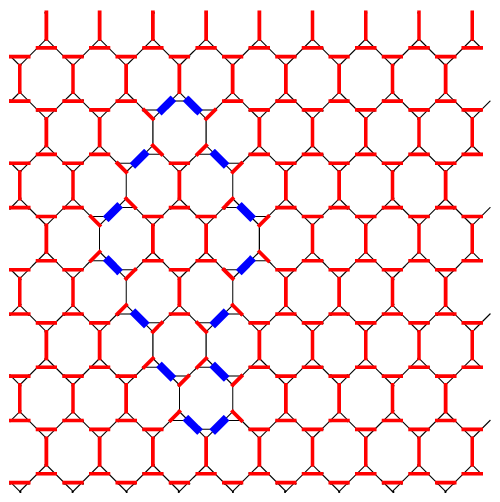}} & \scalebox{0.79}{\includegraphics{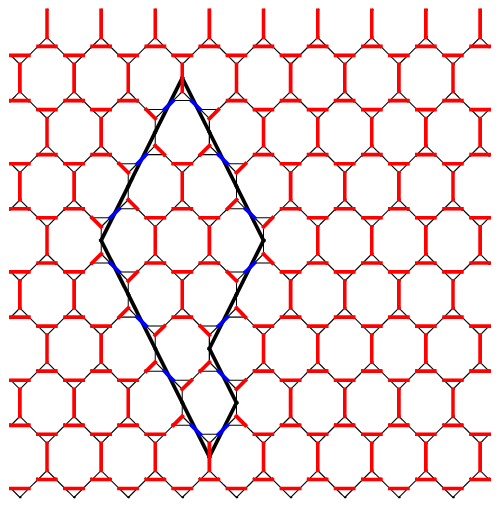}}
\end{array}
\end{align*}
\caption{A dimer configuration with its corresponding particle model.  The bottom two blue edges represent a creation while the top two blue edges represent an annihilation.  \label{intro:pathcorr}}
\end{figure}

\subsection{Scaling Limits and Scaling Windows}

The scaling limit of grid-based models is a subtle concept and we make no attempt to define it rigorously.  Informally, we can think of the {\it scaling limit}  as the limiting measure of some measure when sending the lattice spacing to zero.    For many statistical models, scaling limits are only non-trivial provided that the model is at {\it criticality}. In this context, criticality refers to the correlation length, defined in (\ref{intro:def:corlength}), being infinite. See \cite{fro:92} for discussions regarding criticality and \cite{sch:00} for the underlying philosophy of scaling limits of statistical mechanical models.

The concept of the scaling window is as subtle as the concept of the scaling limit.  In this case, we do provide the probability space for the particle model under the scaling window (see Section \ref{sec:no2}). Roughly speaking, a {\it scaling window} is the limiting measure of some measure when sending the lattice spacing to zero while simultaneously sending a certain parameter to zero.   Contrary to the scaling limit, the scaling window preserves microscopic behavior of the discrete model which means that the scaling window is non-trivial regardless of whether the model is critical.

In this paper, all scaling windows are taken with respect to the parameter $x$.  The lattice is re-scaled \emph{after} the thermodynamic limit has been taken.  We will only consider scalings which give a finite density of particles or creations in the scaling window.  Finally,  we use the following terminology:  for  $\alpha, \beta >0$, the scaling window $(x^\alpha,x^\beta)$ of a measure is the limiting measure obtained when sending a box of lattice points of size $(1/x^\alpha, 1/x^\beta)$ to a box in the continuum of size $(1,1)$, when $x\to 0$.

\subsection{Heuristic Description and Results} \label{sec:intro:state}
 
 \begin{figure}
\begin{center}
\epsfsize220pt
$$\scalebox{1}{\epsfbox{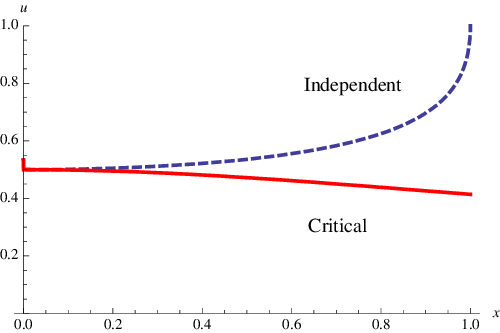}}$$
\caption{The Phase Diagram in terms of $u$ and $x$. The solid line represents criticality, where the partition function is non-analytic.  The dashed line represents independence.}
\label{intro:phase}
\end{center}
\end{figure}
 
Recall that  $\mu^{u,x}$ is the thermodynamic limit measure.   In this paper, we consider $u \in (0,1)$. Define $u_c:=(-1+\sqrt{1+x^2})/x^2$, $u_i:=(1-\sqrt{1-x^2})/x^2$, $\mu^{u,0}=\lim_{x\to 0 }\mu^{u,x}$. For $x \in (0,1)$, we have $0<u_c<1/2<u_i<1$.


Before giving the full statement of results, we first give a heuristic description in terms of the particle model.  Provided that $x>0$ and for any value of $u$, there is a small probability (dependent on $x$) of a creation of two particles at each lattice site.  Each particle performs a random walk which can be dependent on the locations of all other alive particles at that particular time-point.  If two particles meet, they annihilate. The dynamics of each particle depend intrinsically on the chosen values of $u$ and $x$. Under $\mu^{u,x}$, we can think of the value of $u$ controlling `the forces' between the particles: for $u<u_i$, there is an `attraction' between particles, for $u>u_i$, there is a `repulsion' between particles and $u=u_i$, there is no interaction between the particles.   Under $\mu^{u,0}$, there are no creations or annihilations of particles.  It turns out that there are no particles if $u \leq 1/2$ and there are particles performing non-intersecting walks if $u>1/2$ with a density of 
particles dependent on the value of $u$.  We now give a full description of the results of this paper.

 Under $\mu^{u,x}$, Theorem \ref{expect:thm:expectb}, Corollary \ref{expect:prop:Na} and Corollary \ref{expect:coro:NX} give explicit formulas for the expected number of {\bf b}, particles and creations per fundamental domain respectively  for $x \in (0,1)$ and $u \in (0,1)$.  
 By considering a low temperature expansion of the above results, we find that

\begin{thma}\label{expect:thm:behave}
Under $\mu^{u,0}$, we have 
\begin{align}\nonumber
	\mu^{u,0} (\mbox{particle})=\left\{ \begin{array}{ll} 0 & \mbox{if }u \leq \half \\
							2-\frac{2}{\pi} \arccos\left(-\frac{1}{2 u} \right) &									\mbox{if }u >\half \end{array} \right.
\end{align}
and 
\begin{align}\nonumber
	\mu^{u,0}(\mbox{creation})=0.
\end{align}
\end{thma}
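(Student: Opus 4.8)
The plan is to deduce both claims from the exact formulas in Theorem~\ref{expect:thm:expectb} by a low-temperature ($x\to0$) expansion of the elliptic integrals appearing there. Since $\mu_0=\lim_{x\to0}\mu(u,x)$ and the events $\{\mbox{particle}\}$ and $\{\mbox{creation}\}$ are local, it suffices to evaluate $\lim_{x\to0}\P_{\mu(u,x)}(\cdot)$. By translation invariance of the thermodynamic limit $\mu$ (Section~\ref{sec:no2}) together with the convention that a particle sits at a point exactly when the adjacent \textbf{a}-edge is uncovered, one has $\P_\mu(\mbox{particle})=1-\E_\mu[N_a]/m_a$, where $\E_\mu[N_a]$ is the expected number of covered \textbf{a}-edges per fundamental domain furnished by Theorem~\ref{expect:thm:expectb} and $m_a$ is the number of \textbf{a}-edges in a fundamental domain; likewise $\P_\mu(\mbox{creation})$ equals, up to the analogous normalization, the expected number of creations per fundamental domain from that theorem. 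Hence the whole statement reduces to the $x\to0$ asymptotics of those elliptic expressions.

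The first point is that the two phase curves merge as $x\to0$: a Taylor expansion gives $u_c=\frac{-1+\sqrt{1+x^2}}{x^2}=\half-\frac{x^2}{8}+O(x^4)$ and $u_i=\frac{1-\sqrt{1-x^2}}{x^2}=\half+\frac{x^2}{8}+O(x^4)$. Therefore a fixed $u<\half$ satisfies $u<u_c$ for all small $x$, a fixed $u\geq\half$ satisfies $u>u_c$ for all small $x$, and only two of the three branches of Theorem~\ref{expect:thm:expectb} survive the limit. In each surviving branch I would identify the modulus $k=k(x,u)$ of the elliptic integrals in the formula, compute $\lim_{x\to0}k(x,u)$, and substitute the standard asymptotics ($K(k),E(k)\to\pi/2$ as $k\to0$; $E(k)\to1$ and $K(k)=\log\frac{4}{\sqrt{1-k^2}}+o(1)$ as $k\to1$; and $F(\phi,k)\to\phi$ as $k\to0$ for any incomplete integral). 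I expect that for $u<\half$ the modulus degenerates so that the elliptic contributions drop out and $\E_\mu[N_a]/m_a\to1$, i.e.\ $\P_{\mu_0}(\mbox{particle})=0$; and that for $u>\half$ the linear combination of elliptic integrals collapses to $2-\frac{2}{\pi}\arccos(-\frac{1}{2u})$. For the boundary value $u=\half$ one has $u>u_c$ once $x$ is small, so the $u>u_c$ branch applies and its limit is $2-\frac{2}{\pi}\arccos(-1)=0$, consistent with both pieces of the formula.

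For the creation statement, the same expansion applied to the expected number of creations per fundamental domain in Theorem~\ref{expect:thm:expectb} should show that this quantity is $O(x)$ as $x\to0$ in every regime (heuristically, a creation forces a local dimer configuration whose Boltzmann weight carries a factor $a=x$ or $b=ux$ that is not compensated under the relevant normalization), whence $\P_{\mu_0}(\mbox{creation})=0$ uniformly in $u$.

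The main obstacle is the $u>\half$ asymptotics of the previous paragraph: the elliptic integrals in the relevant branch of Theorem~\ref{expect:thm:expectb} have a modulus tending to a degenerate value as $x\to0$, so individual terms either blow up logarithmically or vanish while their combination has a finite nonzero limit. Carrying out the expansion with enough precision to exhibit the exact cancellation — and, in particular, to land on the correct branch of $\arccos$ (note $-\frac{1}{2u}<-1$ precisely when $u<\half$, which is exactly where the formula must saturate at $0$) — is where the real work lies. The reduction step (pinning down $m_a$, $m_{\mathrm{cr}}$ and the fact that $\P_{\mu_0}$ reads off the thermodynamic-limit expectations) and the $u<\half$ and creation limits are comparatively routine.
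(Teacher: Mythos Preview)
Your approach is the paper's approach: the theorem is deduced from the $x\to0$ expansions of the quantities in Theorem~\ref{expect:thm:expectb}, which the paper records as Lemmas~\ref{expect:lem:expand} and~\ref{exp:lem:sing} and then feeds into a one-line Markov argument. Two technical points where your sketch drifts from what is actually needed. First, the integrals in Theorem~\ref{expect:thm:expectb} are complete elliptic integrals of the \emph{first and third} kinds ($\mathcal{K}$ and $\Pi$), not $E$ or incomplete $F$; for fixed $u>\tfrac12$ one eventually has $u>u_i$, the roots $r,1/r$ become complex conjugates, and the effective modulus is purely imaginary ($k=i/k'$ with $k'\to0$), so the real-axis $k\to0,1$ asymptotics you list do not apply --- the paper instead uses the expansions $\mathcal{K}(i/k')=k'\log(4/k')+O(k'^2)$ and $\Pi(ir/k',i/k')-\Pi(i/(rk'),i/k')=k'\log(1/r)+O(k'^2)$, and it is the $\log r$ term that produces $\arccos(-1/(2u))$. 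Second, your handling of $u=\tfrac12$ is circular: $u=\tfrac12$ never leaves the shrinking window $(u_c,u_i)$, so you cannot invoke the $\arccos$ formula (which you only derive for $u>u_i$) to evaluate it. The paper covers this case by monotonicity of $\E_\mu[N_{a^c}]$ in $u$ on $(u_c,u_i)$, sandwiching it between the endpoint values $2x/\pi$ and $x$.
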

In this limit, there is a phase transition at $u=1/2$.  As there are no creations (or annihilations), the particles form non-intersecting lattice paths for $u>1/2$.  

We can now consider the phase behavior for $u<1/2$ chosen independently of $x$, i.e. $u$ is not a function of $x$.  This leads to choosing a natural re-scaling for the scaling window of the particle model:  
\begin{thma} \label{uless:thm:PPP}
For $u<1/2$ chosen independently of $x$, the paths of the particle model  form loops which contract to points in the scaling window $(x,x)$.  In the scaling window $(x,x)$, these points are given by a  Poisson Point process with intensity $(1-2u^2-\sqrt{1-4u^2})/2$.
\end{thma}

Under $\mu^{u_i,x}$ with $x>0$, we find that the locations of particles in the particle model along a horizontal line are distributed according to an i.i.d. Bernoulli distribution with probability $x/(1+x)$.  Each trajectory of a particle is independent of the trajectories of the other particles and along each horizontal line, creations are i.i.d. Bernoulli with probability $c x^2$ (where $c$ is some constant).   This leads to the noisy voter model interpretation of the particle model which is described below. 

The (non-noisy) voter model on $\Z$ with two colors is a time dependent probability measure with state space $\{R,B\}^{\mathbb{Z}}$.  The update of each site is given by randomly choosing the color of a neighboring site (see \cite{Lig:85}).  The noisy voter model of two colors with noise $p$ was introduced in \cite{gra95}. Each site  chooses at random the color of a neighboring site with probability $1-p$ or flips its color with probability $p$.      \cite{Fon:06} constructed the scaling window $(x,x^2)$ of the noisy voter model with noise $c x^2$ ($c$ is a constant) and called it the  Continuum Noisy Voter Model (CNVM).  This construction relies on using the Brownian Web (see \cite{Fon:04}) and  the dual Brownian Web. The correspondence for the particle model at  the special case when $u=u_i$ and the noisy voter model is as follows:  the paths in the particle model are shown, in Section \ref{sec:uindpt}, to represent the boundaries between the two colors in the noisy voter model.  It is not surprising that

\begin{thma} \label{uindpt:thm:main}
Under $\mu^{u_i,x}$, the particle model has the same distribution as the color boundaries of  the two color noisy voter model.   In the scaling window $(x,x^2)$, the particle model converges weakly to the CNVM.
\end{thma}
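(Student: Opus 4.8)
The plan is to prove Theorem \ref{uindpt:thm:main} in two stages: first establish the exact distributional identity between the particle model at $u=u_i$ and the color-boundary process of the two-color noisy voter model at the discrete level, and then upgrade this identity to the scaling-window statement by transporting the known convergence of the noisy voter model to the CNVM through the bijection. For the first stage, I would make precise the assertion already recorded in the discussion preceding the theorem: along any horizontal line the particle locations are i.i.d.\ Bernoulli$(x/(1+x))$, distinct particle trajectories evolve independently, and creations along a line are i.i.d.\ Bernoulli$(cx^2)$. I would derive these facts from the explicit form of the thermodynamic limit $\mu(u_i,x)$, using the special algebraic collapse that occurs precisely at $u=u_i=(1-\sqrt{1-x^2})/x^2$ — this is the value at which the relevant characteristic polynomial (or spectral curve) of the dimer model factors, decoupling the edge correlations. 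The key combinatorial step is the observation (to be carried out in Section \ref{sec:uindpt}) that a particle trajectory in the diagonal-grid picture is exactly the worldline of a boundary between an $R$-block and a $B$-block: a voter update that copies a neighbor's color either leaves a boundary in place or moves it by one step, while a noise flip at an isolated site creates or annihilates a pair of adjacent boundaries — which matches the creation/annihilation dynamics of the particle model with no branching or coalescence. Matching the noise parameter $cx^2$ to the creation probability and the copying probability $1-p$ to the transition weights fixes the correspondence and yields the first sentence of the theorem.

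For the second stage, I would invoke the construction of the CNVM by \cite{Fon:06}: they exhibit the scaling window $(x,x^2)$ of the noisy voter model with noise $cx^2$ as a limit built from the Brownian Web and its dual, with the noise points forming an independent Poisson field that reconnects webs. Since the color boundaries of the noisy voter model are, by the first stage, \emph{the same random object} as the particle model (not merely equal in distribution after some transformation), the scaling window $(x,x^2)$ of the particle model is literally the scaling window $(x,x^2)$ of the color-boundary process, and the latter converges to the CNVM by \cite{Fon:06}. The only genuine work here is checking that the topology in which \cite{Fon:06} establish convergence — convergence of the collection of boundary paths as a compact-set-valued / Brownian-Web-type object — is the same topology in which we are claiming convergence of the particle model, and that the $(1/x, 1/x^2)$-to-$(1,1)$ rescaling in our definition of the scaling window coincides with the $(x,x^2)$ Donsker-type rescaling used there (the horizontal diffusive scaling of boundary locations and the $x^2$ time scaling being exactly matched to keep the particle density and the creation rate finite in the limit). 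Provided these conventions align, the convergence is immediate from the cited result.

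The main obstacle is the first stage — specifically, proving that the particle trajectories are mutually independent and that each is a time-reversible nearest-neighbor walk with the right step distribution, directly from $\mu(u_i,x)$. A clean way to do this is to show that at $u=u_i$ the transfer matrix of the dimer model becomes, after the collapse to the diagonal grid, a product measure over the "boundary" degrees of freedom; equivalently, that the Pfaffian/determinantal correlation kernel of the particle process degenerates at $u=u_i$ so that the $k$-point functions factor into a product of one-particle walk kernels. I would isolate the algebraic identity at $u=u_i$ that forces this degeneration and then argue that a Pfaffian point process whose two-point correlations between trajectories vanish is a superposition of independent path measures. Once independence and the single-path law are in hand, everything else — the Bernoulli$(x/(1+x))$ one-line marginal, the Bernoulli$(cx^2)$ creation law, and hence the noisy-voter identification — follows by direct computation, and the scaling-window statement is then a transcription of \cite{Fon:06}.
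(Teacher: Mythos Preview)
Your two-stage plan is correct and matches the paper's approach: first establish the exact noisy-voter identification at the discrete level via the algebraic degeneration at $u=u_i$, then cite \cite{Fon:06} to pass to the CNVM in the $(x,x^2)$ scaling window. The paper carries out stage one exactly as you outline, by computing the inverse Kasteleyn entries explicitly (Lemma~\ref{cor:lemmacritical2}) and showing that the $n$-point Pfaffians for particle locations and for \textbf{b}-edge occupations factor into products of one-point probabilities; the row-by-row description of the dimer covering then coincides with the noisy-voter update rule.

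One point to sharpen: your proposed route to independence --- ``argue that a Pfaffian point process whose two-point correlations between trajectories vanish is a superposition of independent path measures'' --- is not a valid general principle, and the paper does not argue this way. What actually happens is a concrete matrix identity: at $u=u_i$ one has $K^{-1}(v_3,v_3(n,-n))=-K^{-1}(v_3,v_4(n,-n))$ (and analogues for the \textbf{b}-edge vertices), so after a row/column operation each off-diagonal $2\times 2$ block of the inclusion--exclusion Pfaffian acquires a zero row. The Pfaffian then reduces to the product of the diagonal blocks, giving the full $n$-point factorization directly, not via an inference from two-point data. If you replace your abstract step with this explicit mechanism, your proof is complete and essentially identical to the paper's.
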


We can now consider the behavior for $u_\g:=(1-\sqrt{1-x^2 \g })/(\g x^2)=1/2+\g x^2/8+O(x^4)$ for $\g \in \R \backslash \{0\}$ and $u_0=1/2$.  Notice that we have $u_{-1}$ is equal to the critical value of $u$, i.e. $u_{-1}=u_c$ and that $u_1$ is equal to the independent value of $u$, i.e. $u_1=u_i$.   We can find the  stationary measure for particle locations at time $t \in \R$ in the scaling window $(x,x^2)$:

\begin{thma} \label{intro:mainthm}
Let $u=u_\g$ with $\g \in \R \backslash \{ 1\}$. In the scaling window $(x,x^2)$ the distributions of the particles along  an arbitrary horizontal line converges weakly to a Pfaffian point processes,$\P^{\g}_0$, with  kernel 
\begin{align} \nonumber
M_{\g}^{res}(y,y)= \left( \begin{array}{cc}
	0 & e(\gamma)\\
	-e(\gamma) & 0 \end{array} \right)
\end{align}
for $y \in \R$ and for $y_1 \in \R$ and for $y_2 \in \R$ with $y_1<y_2$
\begin{align} \nonumber
M^{res}_{\g }(y_1, y_2) =\left( \begin{array}{cc}
	-E_1^\g(y_1-y_2|) & -E_2^\g(|y_1-y_2|) \\
	E_2^\g(|y_1-y_2|) & E_1^\g(|y_1-y_2|) \end{array} \right),
\end{align}
where for 
$e_1(\g)=2+\sqrt{2(1-\g)}$, $e_2(\g)=(2-\sqrt{2(1-\g)})\mathbb{I}_{\g>-1}+(-2+\sqrt{2(1-\g)})\mathbb{I}_{\g<-1}$ and $\alpha>0$, we have
\(\nonumber
e(\gamma)=\frac{1}{\pi i} \int_{e_2(\g)}^{e_1(\g)}  \frac{2-2 \g+p^2 }{2\sqrt{4+8\g+4 \g ^2-12p^2+4\g p^2+p^4}} dp,
\)
\(\nonumber
	 E_1^\g (\alpha)= -\frac{1}{\pi i } \int_{e_2(\g)}^{e_1(\g)} \frac{2 p e^{-\alpha p} }{\sqrt{4+8\g+4 \g ^2-12p^2+4\g p^2+p^4}} dp
\)
and
\(\nonumber
	E_2^\g (\alpha)= -\frac{1}{\pi i } \int_{e_2(\g)}^{e_1(\g)} \frac{(-2-2\g-p^2) e^{-\alpha p} }{2\sqrt{4+8\g+4 \g ^2-12p^2+4\g p^2+p^4}} dp.
\)
When $\g>1$, we have $\overline{e_1(\gamma)}=e_2(\gamma)$.  In the special case when $\g=-1$, we have $e(-1)=2/\pi$, 
\(\nonumber
	E_1^{-1} (\alpha)=B_I(0,4\alpha)-S_L(0,4\alpha)=\frac{1}{\pi} \int_0^\pi e^{-4 \alpha \sin \theta}d\theta
\)
and 
\(\nonumber
E_2^{-1} (\alpha)=B_I(0,4\alpha)-S_L(-1,4\alpha)=-\frac{1}{\pi i} \int_0^\pi e^{i\theta-4 \alpha \sin \theta}d\theta,
\)
where $B_I(n,z)$ is the modified Bessel Function of the first kind of order $n$ and $S_L(n,z)$ is the modified Struve function of order $n$.
\end{thma}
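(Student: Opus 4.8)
The argument splits into a finite-temperature Kasteleyn computation, which realizes the particle process on a horizontal line as a Pfaffian point process at every $x$, and an asymptotic analysis of the resulting kernel in the scaling window $(x,x^2)$.

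For the first part I would use the Kasteleyn machinery already underlying Theorem~\ref{expect:thm:expectb}. Fix a Kasteleyn orientation of $G_n$; since the Fisher lattice is non-bipartite one works with the antisymmetric Kasteleyn matrix $K=K_n$ rather than a bipartite operator. Translation invariance block-diagonalizes $K$ under the discrete Fourier transform in the two torus directions, and inverting each (constant-size) Fourier block and resumming gives the entries of $K^{-1}$ as double sums over $(\t,\p)\in(\tfrac{2\pi}{n}\Z)^2$; letting $n\to\infty$, these become double integrals $\frac{1}{(2\pi)^2}\int\!\!\int$ over the torus of a rational function of $z=e^{i\t}$, $w=e^{i\p}$ divided by the characteristic polynomial $P_x(z,w)$ of the Fisher lattice. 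A particle at a site $y$ on a fixed horizontal line is, by definition, the absence of a dimer on the corresponding {\bf a} edge, so the Pfaffian formalism for non-bipartite dimer models (\cite{bou:08}) expresses the $k$-point particle correlation at $y_1<\dots<y_k$ as the Pfaffian of the $2k\times 2k$ antisymmetric matrix built from the $K^{-1}$ entries joining the endpoints of the {\bf a} edges at the $y_i$. Organizing these entries into $2\times 2$ blocks $M_x(y_i,y_j)$ exhibits the particle marginal on any horizontal line as a Pfaffian point process for every $x\in(0,1)$, with an explicit, if unwieldy, kernel.

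For the second part I would first perform the $\p$ (vertical) integral by residues: the poles are the roots $w=w_\ell(z)$ of $P_x(z,w)=0$, only those with $|w_\ell(z)|<1$ contribute, and this collapses $M_x$ to a single integral in $z$ over the unit circle. Substituting $u=u_\g=\tfrac12+\tfrac{\g x^2}{8}+O(x^4)$ and passing to the scaling window $(x,x^2)$, two particles at continuum separation $\a=|y_1-y_2|$ lie at lattice separation $\sim \a/x$, so the relevant $K^{-1}$ entries carry a factor $z^{\lfloor \a/x\rfloor}$. The key structural fact, to be extracted from $P_x$, is that as $x\to0$ the four $z$-roots relevant to the line marginal all approach $z=1$ at rate $x$, and after the substitution $z=e^{-xp}$ the leading part of $P_x$ becomes proportional to $p^4-(12-4\g)p^2+4(1+\g)^2$, whose roots in $p^2$ are exactly $e_1(\g)^2$ and $e_2(\g)^2$; this is the origin of the branch points $e_1(\g),e_2(\g)$ of $\sqrt{4+8\g+4\g^2-12p^2+4\g p^2+p^4}$. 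Deforming the unit circle onto the emerging cut $[e_2(\g),e_1(\g)]$, the factor $z^{\lfloor\a/x\rfloor}$ tends to $e^{-\a p}$, and dominated convergence turns the rescaled single integral into precisely $E_1^\g(\a)$ and $E_2^\g(\a)$; the diagonal data $M_{\g,x}^{res}(y,y)$ governing the particle density comes from the same computation at $\a=0$ and yields the constant $e(\g)$. For $\g=-1$ the branch points degenerate to $e_2(-1)=0$, $e_1(-1)=4$, the quartic factors as $p^2(p^2-16)$, and the substitution $p=4\sin\t$ converts $E_1^{-1},E_2^{-1},e(-1)$ into the stated $\frac1\pi\int_0^\pi$ representations, which are the classical integral formulas for $B_I(0,4\a)-S_L(0,4\a)$ and $S_L(-1,4\a)-B_I(0,4\a)$.

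The main obstacle is the scaling-window asymptotics of the single contour integral. One has to pin down the correct joint rescaling of the Fourier variables, verify which roots of $P_x(z,w)$ are selected by the residue computation and that the remaining contributions vanish in the limit, justify deforming the $z$-contour onto the nascent branch cut uniformly in small $x$, and produce an integrable dominating bound so that the limit may be taken inside the integral, all while keeping the signs and square-root branches in the Pfaffian kernel consistent. By comparison, the finite-$x$ Pfaffian structure is essentially routine given \cite{bou:08} and the computations underlying Theorem~\ref{expect:thm:expectb}.
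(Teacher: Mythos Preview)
Your outline matches the paper's strategy closely: reduce the horizontal-line particle correlations to Pfaffians of inverse Kasteleyn entries, compute those entries as single contour integrals after a residue step, and extract the $x\to 0$ asymptotics by rescaling the remaining contour variable near $1$ so that the quartic $p^4-(12-4\g)p^2+4(1+\g)^2$ emerges. Three points of comparison are worth noting.

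First, the passage from the local statistics formula of \cite{bou:08} to particle correlations is not quite as immediate as you suggest: that formula yields the probability that a collection of edges \emph{is} covered, whereas a particle is the \emph{absence} of a dimer on an {\bf a} edge. The paper handles this via a separate Pfaffian inclusion--exclusion identity (its Lemma~\ref{pfaffexp}), showing that the alternating sum over subsets still collapses to a single Pfaffian, $\Pf(I^{sk}_n-K_n)$. You should flag this step explicitly rather than absorbing it into ``the Pfaffian formalism''.

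Second, rather than integrating in $w$ with $z$ fixed, the paper first substitutes $z=\zeta w$. This converts $z^n w^{-n}$ into $\zeta^n$, so after the $w$-residue the remaining integrand carries a clean factor $\zeta^n$ and the four relevant roots of the square-root term cluster near $\zeta=1$; the rescaling $\zeta=1-x\xi$ (your $z=e^{-xp}$ is asymptotically the same) then goes through without having to track the $w$-root functions $w_\ell(z)$. Your route works but is messier.

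Third, you stop at pointwise kernel convergence; the paper closes with a short generating-function argument (factorial moments expressed through correlation functions, Hadamard's bound for absolute convergence, then dominated convergence on the series) to upgrade kernel convergence to convergence of finite-dimensional distributions of the point process. This is routine but should be mentioned.
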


Recall that a \emph{Pfaffian point process} is the analog of a determinantal point process for an anti-symmetric matrix whose entries are given by some integral kernel with the determinant replaced by a Pfaffian. 
 

 From Theorem \ref{intro:mainthm}, we can determine the covariance between two particles on the same horizontal line which is given by
 \( \label{intro:covariance}
 	C(\alpha,\g):=E_1^\g (\alpha)^2-E_2^\g(\alpha)^2
\)
for $\g \not = 1$, where $\alpha$ is the distance between two particles.  For $\g = 1$, we have $C(\alpha,1)=0$ for all $\alpha \geq 0$.  Figure \ref{intro:pic:covariance} shows a plot of this function for $\alpha=0.2$.  It can be shown that $C(\alpha,\g)$ is positive if $\g<1$, is negative if $\g>1$ and is 0 if $\g=1$.

  \begin{figure}
\begin{center}
\epsfsize220pt
$$\scalebox{1}{\epsfbox{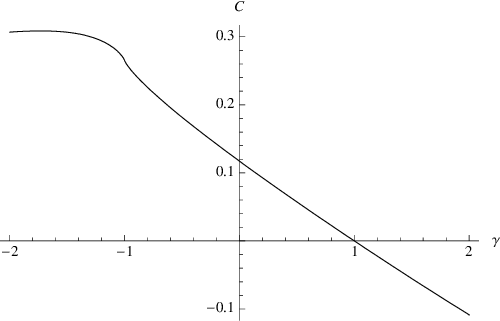}}$$
\caption{A plot of $C(0.2,\g)$, defined in (\ref{intro:covariance}), for $g\in (-2,2)$.  }
\label{intro:pic:covariance}
\end{center}
\end{figure}

As in \cite{cha:88}, define the correlation length to be
\( \label{intro:def:corlength}
	\xi(\g):= -\lim_{\alpha \to \infty} \frac{\alpha}{\log ( |C(\alpha, \gamma)|)}. 
\)
This measures the rate of decay of the covariance.  In Section \ref{sec:inbet}, we prove the following lemma.
\begin{lemma}\label{intro:lem:corlen}
\begin{align} \nonumber
	\xi(\g)=	\left\{ \begin{array}{ll}
				\frac{1}{2 e_2(\g)} & \g<-1\\
				\infty & \g=-1 \\
				\frac{1}{2 e_2(\g)}& -1<\g<1 \\
				0 & \g=1\\
				\frac{1}{4} & \g>1
			\end{array} \right.
\end{align}
where $e_2(\g)$ is defined in Theorem \ref{intro:mainthm}.
\end{lemma}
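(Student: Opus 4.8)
The plan is to reduce the lemma to the $\alpha\to\infty$ asymptotics of the two contour integrals $E_{1}^{\g}(\alpha)$, $E_{2}^{\g}(\alpha)$ of Theorem~\ref{intro:mainthm}, extracted by Laplace's method. Set $e_{1}:=e_{1}(\g)$, $e_{2}:=e_{2}(\g)$. The starting point is algebraic: the polynomial under the radical, $P(p):=p^{4}+(4\g-12)p^{2}+4(\g+1)^{2}$, has its two $p^{2}$-roots equal to $(6-2\g)\pm 4\sqrt{2(1-\g)}$, which are exactly $e_{1}^{2}$ and $e_{2}^{2}$; hence $P(p)=(p-e_{1})(p+e_{1})(p-e_{2})(p+e_{2})$. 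One also has $p^{2}+4p+2\g+2=(p+2)^{2}-2(1-\g)$ and $p^{2}-4p+2\g+2=(p-2)^{2}-2(1-\g)$, which factor through $e_{1},e_{2}$ too; writing $C(\alpha,\g)=(E_{1}^{\g}-E_{2}^{\g})(E_{1}^{\g}+E_{2}^{\g})$, combining numerators, and using $\sqrt{P(p)}=i\sqrt{|P(p)|}$ on the interval $(e_{2},e_{1})$ where $P<0$, one obtains for $-1<\g<1$ the real representations
\begin{align*}
E_{1}^{\g}(\alpha)+E_{2}^{\g}(\alpha)&=\frac{1}{2\pi}\int_{e_{2}}^{e_{1}}\sqrt{\frac{(p+e_{1})(p+e_{2})}{(e_{1}-p)(p-e_{2})}}\;e^{-\alpha p}\,dp,\\
E_{1}^{\g}(\alpha)-E_{2}^{\g}(\alpha)&=\frac{1}{2\pi}\int_{e_{2}}^{e_{1}}\sqrt{\frac{(e_{1}-p)(p-e_{2})}{(p+e_{1})(p+e_{2})}}\;e^{-\alpha p}\,dp
\end{align*}
(for $\g<-1$ there is an analogous pair, obtained by replacing $e_{2}$ by $-e_{2}$ in the factorizations, in which the roles of $E_{1}^{\g}+E_{2}^{\g}$ and $E_{1}^{\g}-E_{2}^{\g}$ are interchanged). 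In particular $C(\alpha,\g)$ is a product of two integrals of the form $e^{-\alpha p}$ against a weight which, on the open interval, is a genuine positive function with an integrable algebraic singularity at each endpoint, so $C(\alpha,\g)>0$ for $\g<1$, in agreement with the remark after Theorem~\ref{intro:mainthm}.

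For the two ``generic'' ranges $-1<\g<1$ and $\g<-1$ the endpoints are real with $0<e_{2}<e_{1}$, so $e^{-\alpha p}$ is largest at the left endpoint $p=e_{2}$; there one of $E_{1}^{\g}\pm E_{2}^{\g}$ has weight $\sim(p-e_{2})^{-1/2}$ and the other $\sim(p-e_{2})^{+1/2}$. Laplace's method then gives $E_{1}^{\g}+E_{2}^{\g}\sim c_{+}\,\alpha^{-1/2}e^{-\alpha e_{2}}$ and $E_{1}^{\g}-E_{2}^{\g}\sim c_{-}\,\alpha^{-3/2}e^{-\alpha e_{2}}$ (the two roles swap when $\g<-1$, but the product is unaffected); the constants $c_{\pm}$ are nonzero, since the prefactor surviving at $p=e_{2}$ vanishes only when $e_{1}=\pm e_{2}$ or $e_{2}=0$, impossible for $\g\neq\pm1$. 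Multiplying, $C(\alpha,\g)\sim c_{+}c_{-}\,\alpha^{-2}e^{-2\alpha e_{2}}$, hence $\log|C(\alpha,\g)|=-2\alpha\,e_{2}(\g)+O(\log\alpha)$ and $\xi(\g)=1/(2e_{2}(\g))$, which is the first, third and fifth lines of the claim.

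It remains to handle the three boundary values. At $\g=1$, $C(\alpha,1)\equiv 0$ (remarked after Theorem~\ref{intro:mainthm}), so $\log|C(\alpha,1)|\equiv-\infty$ and $\xi(1)=0$. At $\g=-1$, $e_{2}(-1)=0$ and the left endpoint degenerates: the factor $p$ in $\sqrt{P(p)}=|p|\,\sqrt{p^{2}-e_{1}^{2}}$ cancels the would-be singularity, so the decay becomes polynomial. Using the explicit formulas from Theorem~\ref{intro:mainthm}, $E_{1}^{-1}(\alpha)=\tfrac1\pi\int_{0}^{\pi}e^{-4\alpha\sin\theta}\,d\theta$ and (integrating the $\cos\theta$ part exactly) $E_{2}^{-1}(\alpha)=\tfrac1\pi\int_{0}^{\pi}\sin\theta\,e^{-4\alpha\sin\theta}\,d\theta$; Laplace's method at $\theta=0,\pi$ gives $E_{1}^{-1}(\alpha)\sim\tfrac{1}{2\pi\alpha}$, $E_{2}^{-1}(\alpha)\sim\tfrac{1}{8\pi\alpha^{2}}$, whence $C(\alpha,-1)\sim\tfrac{1}{4\pi^{2}\alpha^{2}}$; thus $\log|C(\alpha,-1)|\sim-2\log\alpha$, $\alpha/\log|C(\alpha,-1)|\to-\infty$, and $\xi(-1)=\infty$. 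Finally, for $\g>1$ the branch points $e_{1},e_{2}=2\pm i\sqrt{2(\g-1)}$ are complex conjugates with real part $2$, and the contour is the vertical segment joining them, on which $|e^{-\alpha p}|=e^{-2\alpha}$; thus $E_{j}^{\g}(\alpha)=e^{-2\alpha}\int_{-s}^{s}\psi_{j}(t)e^{-i\alpha t}\,dt$ with $s=\sqrt{2(\g-1)}$ and $\psi_{j}$ having an integrable $(s\mp t)^{-1/2}$ singularity at $t=\pm s$, so $E_{j}^{\g}(\alpha)=e^{-2\alpha}\alpha^{-1/2}\times(\text{bounded, oscillatory in }\alpha)$ and $|C(\alpha,\g)|\leq C\,e^{-4\alpha}$. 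Since $C(\alpha,\g)<0$ for every $\alpha$ (also from the remark after the theorem), there are no sign changes, and one concludes $\log|C(\alpha,\g)|=-4\alpha+O(\log\alpha)$, $\xi(\g)=1/4$.

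The delicate step is exactly this last case $\g>1$: the two conjugate endpoints contribute with opposite oscillatory phases $e^{\mp i\alpha s}$, and one must ensure that at near-cancellations of the leading oscillatory coefficient the quantity $|C(\alpha,\g)|$ does not drop below $e^{-4\alpha}$ times a fixed power of $\alpha$. I would argue this from the endpoint asymptotic expansions: $E_{1}^{\g}$ and $E_{2}^{\g}$ are both $e^{-2\alpha}\sum_{k}a_{k}(\alpha)\alpha^{-k/2}$ with the $a_{k}$ trigonometric polynomials in $\alpha$, so every term of $C=(E_{1}^{\g})^{2}-(E_{2}^{\g})^{2}$ carries the common factor $e^{-4\alpha}$; because $C(\alpha,\g)$ extends to an entire function of $\alpha$ that is real and strictly negative on $\R$, the function $e^{4\alpha}|C(\alpha,\g)|$ cannot vanish, and its asymptotic expansion cannot be identically zero, which pins it between two powers of $\alpha$ for all large $\alpha$ and forces $\lim_{\alpha\to\infty}\alpha/\log|C(\alpha,\g)|=-1/4$. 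The remaining routine ingredient---used throughout---is the standard estimate for Laplace/Watson integrals with integrable algebraic endpoint singularities, which I would either record as a one-line lemma or quote.
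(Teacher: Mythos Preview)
Your proof is correct and follows a genuinely different route from the paper's. The paper does not factor $C(\alpha,\g)=(E_{1}^{\g}+E_{2}^{\g})(E_{1}^{\g}-E_{2}^{\g})$ nor exploit the algebraic identities $p^{2}\pm 4p+2\g+2=(p\pm e_{1})(p\pm e_{2})$; instead, for $-1<\g<1$ it bounds $e^{-\alpha p}$ crudely on $[e_{2},e_{1}]$ by $e^{-\alpha e_{2}}$ and $e^{-\alpha e_{1}}$, obtaining $e^{-\alpha e_{1}}E_{j}^{\g}(0)\le E_{j}^{\g}(\alpha)\le e^{-\alpha e_{2}}E_{j}^{\g}(0)$ and then sandwiching $C(\alpha,\g)$. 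For $\g<-1$ it modifies this by splitting $E_{2}^{\g}$ into positive and negative parts. For $\g>1$ it parameterises the vertical contour, writes $E_{1}^{\g}\pm E_{2}^{\g}$ as $e^{-2\alpha}$ times an oscillatory integral, and appeals to integration by parts (integrating $\sqrt{p-r}\,e^{-i\alpha p}$) to get polynomial decay in $\alpha$.

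What your approach buys is sharper information and, in fact, a cleaner lower bound. The paper's sandwich for $-1<\g<1$ reads $e^{-2\alpha e_{1}}E_{1}^{\g}(0)^{2}-e^{-2\alpha e_{2}}E_{2}^{\g}(0)^{2}\le C(\alpha,\g)$, whose left side is \emph{negative} for large $\alpha$ and so does not directly yield the needed lower bound $|C|\ge c\,e^{-2\alpha e_{2}-o(\alpha)}$; the conclusion is correct, but the argument as written is incomplete. Your Laplace/Watson expansion at the left endpoint $p=e_{2}$ gives the precise asymptotic $C(\alpha,\g)\sim c\,\alpha^{-2}e^{-2\alpha e_{2}}$ with an explicitly positive constant (both factors $E_{1}^{\g}\pm E_{2}^{\g}$ are integrals of positive functions), which closes the lower bound cleanly. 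Conversely, the paper's crude bounds avoid any appeal to Laplace's method and are more elementary when they work. For $\g>1$ the two arguments are close in spirit; you are right to flag the lower bound as the delicate point, and your use of the strict sign $C(\alpha,\g)<0$ to exclude anomalous cancellations is the natural extra ingredient (the paper does not discuss this and its ``decays at rate $1/r$'' is really a statement about $\alpha$).
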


The  discontinuity at $\g=1$ is due to the definition of the correlation length not to any explicit change in phase behavior.  One further interesting consequence of the above lemma is the following: at criticality, there is still scale invariance after the scaling window $(x,x^2)$ has been taken. 
 
Finally we make the remark that for $u=u_\g$, the creations (and annihilations) form a dense set regardless of the value of $\g$ in the scaling window $(x,x^2)$.  This is a consequence of 
 \(\nonumber
	 \mu^{u_\g,x} ( \# \mbox{ of Creations per fundamental domain})=\Theta (x^2)
\) 
where $\Theta(x^2)$ means bounded above by $C x^2$ and below by $c x^2$ for small $x$.

\subsection{Overview of the paper}

We begin by giving formulas for the dimer model on the Fisher lattice, the particle model probability space and a inclusion-exclusion formula for Pfaffian point processes.  This is all contained in Section \ref{sec:no2}.  In Section \ref{sec:exp}, we calculate exactly the expected number of dimers covering {\bf a}, {\bf b} edges and creations in the thermodynamic limit.  This leads to proving Theorem \ref{expect:thm:behave}.   In Section \ref{sec:hpart}, we focus on the behavior of the model when $u=u_c$ and provide the proof of Theorem \ref{intro:mainthm} when $\g=-1$.    In Section~\ref{sec:ulessthanuc}, we prove Theorem~\ref{uless:thm:PPP}.   The case when $u=u_i$ is considered in Section \ref{sec:uindpt}, and there, we show the proof that the dimer model when $u=u_i$ is equal in distribution to the noisy voter model.  This leads to the convergence to the Continuum Noisy Voter Model.   In Section \ref{sec:inbet}, 
we prove the rest of Theorem \ref{intro:mainthm} (for $\g \not = 1, -1$) and also find an explicit expression for the correlation lengths for all values of $\g$.  We also provide an Appendix which gives more of the details of the model as well as specific examples of the computations we have used. Finally, we mention that some of the more technical computations have relied on using computer algebra software.

{\bf Acknowledgements:} I am very grateful for the help of Richard Kenyon, who provided his valuable insight through countless hours of discussion.  I'm also very thankful for the aid of C\'{e}dric Boutillier who gave a lot of comments and assistance while writing this paper.  I would like to thank David Brydges for many discussions on statistical mechanical models, Jon Warren for pointing out the Brownian Web references, Zhongyang Li for comments on this paper, Richard Arratia for sending a copy of his PhD thesis, \cite{Arr:79} and Ben Young for some useful comments.   I also very thankful for the anonymous referee whose comments and suggestions dramatically improved this paper.  The author is supported by the grant KAW 2010.0063 from the Knut and Alice Wallenberg Foundation.

\section{Setup}  \label{sec:no2}

\subsection{Partition Function, Local Probabilities and Notation}  \label{dimer:sect:fisher}

Here, we explain some of the theory of dimer models.  
All the results in this subsection were first proved in \cite{bou:08} for a certain class of dimer model on non-bipartite graphs and were extended in \cite{Li1:10, Li:10} to the setting we require.

We refer to the graph $G_1$ as the fundamental domain. Orient the edges of $G_1$  so that the number of counter clockwise edges per face is odd.     This is called the {\it Kasteleyn orientation}. Such an orientation is given in Figure \ref{dimer:lattice} for the fundamental domain.   Assume that $G$ (the infinite planar graph) has the periodic Kasteleyn orientation induced by $G_1$.  As $G$ is $\mathbb{Z}^2$ periodic, we can write any vertex of $G$ as a vertex of $G_1$ plus  a translation of the fundamental domain, where the first co-ordinate of the translation is in the direction of the vector $(1,1)$ and the second co-ordinate is in the direction of the vector $(-1,1)$ (i.e.  $v+(x,y)$ is the vertex $v$ in the fundamental domain translated by  $(x-y,x+y)$).

Let $V(G_1)=\{v_i\}$ be the set of vertices of $G_1$ labeled as per Figure \ref{dimer:lattice}. Let $\g_{1}$ (and resp. $\g_{2}$) be a path in the  dual of $G_1$, winding around the torus in the direction of the vector $(-1,1)$  (and (1,1)).   Write $e_{ij}$ for the weight of the edge $\overrightarrow{v_i v_j}$. For parameters $z$ and $w$, let $w_{ij}=z^{n_1}w^{n_2} e_{i j}$, where 
\begin{align} \nonumber
	n_1 (\mbox{resp. }n_2)=		\left( \begin{array}{ll} 
		1 & \overrightarrow{v_i v_j} \mbox{ crosses } \g_1 (\mbox{resp. } \g_2) \mbox{ from below} \\
		0 & \overrightarrow{v_i v_j} \mbox{ does not cross } \g_1 (\mbox{resp. }\g_2)  \\
		-1 & \overrightarrow{v_i v_j} \mbox{ crosses } \g_1 (\mbox{resp. }\g_2) \mbox{ from above} \\
\end{array} \right.
\end{align}
Let $K(z,w)$ be the $6 \times 6$ anti-symmetric matrix given by
\begin{align} \nonumber
	(K(z,w))_{i,j} = \left\{ \begin{array}{ll}
				w_{ij} & \mbox{if } v_i \sim v_j, \mbox{ and } v_i \to v_j \\
				-w_{ij} & \mbox{if } v_i \sim v_j, \mbox{ and } v_j \to v_i\\
				0 & \mbox{otherwise} \end{array}\right.
\end{align}
where $v_i \sim v_j$ means that $v_i$ and $v_j$ share an edge and $v_i \to v_j$ denotes an arrow from $v_i$ to $v_j$ in the Kasteleyn orientation.  This gives

\begin{align} 
K(z,w)=\left( \begin{array}{cccccc}
0 &  -1 &  1 &  0 &  0 &  b/z \\
1 &  0 &  -1 &  0 &  b/w &  0 \\ 
-1 &  1 &  0 &  a &  0 &   0 \\
0 &  0 &  -a &  0 &  1 &  -1 \\
0 &  -b w &  0 &  -1 &  0 &  1 \\
-b z &  0 &  0 &   1 &  -1 &  0  
\end{array} \right). \label{Kasteleyn}
\end{align}
where $a= x$ and $b=u x$.  

The relevance of the above discussion is that we can write the free energy and probabilities of observing a local configuration  in the thermodynamic limit as expressions involving $K(z,w)$.  The characteristic polynomial for the fundamental domain, $P(z,w)$, is defined to be the determinant of $K(z,w)$. This gives
\begin{align}
	P(z,w)=a^2 +2b^2+a^2b^4 + ab(1-b^2)(z+\frac{1}{z} +w + \frac{1}{w}) +b^2(1-a^2) (\frac{z}{w} + \frac{w}{z}) \label{charpoly}
\end{align}
The free energy can be written in terms of the characteristic polynomial, namely
\begin{align} 
 	-\log Z :=  -\lim_{n \to \infty}\frac{1}{n^2} \log Z(G_n)=-\frac{1}{2(4 \pi ^2)} \int_{|z|=1} \int_{|w|=1} \log P(z,w) \frac{ dw}{w} \frac{dz}{z} \label{setup:logZ}
\end{align}
Let $E=\{e_1=u_1 u_2, \dots , e_m=u_{2m-1} u_m \}$ be a subset of edges, with each $u_i$ representing a distinct vertex.  The probability of observing $E$ in thermodynamic limit is given by 
\( \label{sec2:localstats}
	\P_\mu(e_1, \dots, e_m) = \left( \prod_{i=1}^m K(u_{2i},u_{2i+1}) \right) \Pf \left( (K^{-1} (u_i,u_j) \right)^T)_{1 \leq i ,j \leq 2 m}
\)
where $T$ represents the transpose of a matrix and assuming $v$ and $\tilde{v}$ are in the same fundamental domain, 
\(  \nonumber
	K^{-1} (v,\tilde{v}+(x,y)) = \frac{1}{(2 \pi i)^2} \int_{\mathbb{T}^2} K (z,w)_{v,\tilde{v}}^{-1} w^x z^y \frac{dw}{w} \frac{dz}{z}.
\)
This formula is known as the {\it local statistics formula}, originally formulated for dimer models on bipartite graphs \cite{Ken:97}. 

For the rest of the paper, we use the following notation.  We denote $v_i(n,m)= v_i -(n,m)$ where $i \in \{1, 6\}$. In other words, $v_i(n,m)$ is the vertex $v_i$ in the $(-n,-m)$ fundamental domain.  Note that $v_i=v_i(0,0)=v_i+(0,0)$. We also let
\begin{equation}
	H_u(m,n)=\frac{1}{(2\pi i )^2} \int_{\mathbb{T}^2} \frac{z^m w^n- (-1)^{m+n} \delta_{u_c}(u) }{P(z,w)} \frac{dz}{z} \frac{dw}{w}. \label{definitionofH}
\end{equation}

\subsection{Critical and independent values}

In this subsection, we introduce the critical and independent values.  More explicit explanations of how these are obtained can be found in Appendix \ref{app:secA}.   

A dimer model is said to be {\it critical} if there exists values of $(z,w) \in \mathbb{T}^2$ such that $P(z,w)=0$.  The dimer model studied in this paper, $\mu^{u,x}$ is critical when $u=(-1+\sqrt{1+x^2})/x^2$ and $u=(1+\sqrt{1+x^2})/x^2$.  We define $u_c$, the \emph{critical value},  to be $(-1+\sqrt{1+x^2})/x^2$, which lies in $(0,1)$ for $x<1$.

This particular dimer model, $\mu^{u,x}$, has other values of interest, namely $(1-\sqrt{1-x^2})/x^2$ and $(1+\sqrt{1-x^2})/x^2$.     We define $u_i$, the \emph{independent value}, to be  $(1-\sqrt{1-x^2})/x^2$.

\subsection{Particle Model probability space} \label{dimer:sec:prob}

In this subsection, we define the underlying probability space for the particle model by use of a certain metric.  We use the same space defined in \cite{aiz:99} and \cite{Fed:08}.  Let $\dot{\R}^2$ denote $\R^2 \cup \{ \infty \}$, i.e. the compactification of $\R^2$.  Let $d$ be the spherical metric, which is defined to be $d: \dot{\R}^2 \times \dot{\R}^2 \to \R$ by
\( \nonumber
	d(u,v)=\inf \int (1+|\phi|^2)^{-1} ds
\)
where the infimum is over all smooth curves connecting $u$ and $v$, $\phi$ is parameterized by the arc length $s$ and  $|\cdot|$ is the Euclidean metric. 

Let $\mathcal{H}$ be the space of all collections of curves in $\dot{\R}^2$.  We need to define a metric on $\mathcal{H}$ in order to determine its open sets. Let $\g_1$ and $\g_2$ be two curves with their parametrization at time $t$ given by $\g_1(t)$ and $\g_2(t)$.  Consider a complete separable metric space $S$ of continuous curves in $\dot{\R}^2$, with distance given by
\(\nonumber
	D(\g_1, \g_2)= \inf \sup_{t \in [0,1]} d( \g_1(t), \g_2(t))
\)
where the infimum is over all choices of parametrizations of $\g_1$ and $\g_2$.  The collection of all closed sets of $S$ is exactly $\mathcal{H}$.  Let $\mathcal{F}_1$ and $\mathcal{F}_2$ denote two closed sets of curves.  Then, $D$ induces the Hausdorff metric, $D_{\mathcal{H}}$, defined by
\(\nonumber
	D_{\mathcal{H}}(\mathcal{F}_1, \mathcal{F}_2)) < \e \Rightarrow ( \forall \g_1 \in \mathcal{F}_1, \exists \g_2 \in \mathcal{F}_2 \st D(\g_1,\g_2) <\e \mbox{ and vice versa})
\)
Therefore, $(\mathcal{H}, D_{\mathcal{H}})$ defines a complete separable metric space.  Let $\mathcal{F}_{\mathcal{H}}$ denote the Borel $\sigma$ field associated with the metric $D_\mathcal{H}$.  Let $\mathcal{M}_\mathcal{H}$ be the space of probability measures taking values in $ (\mathcal{H},\mathcal{F}_{\mathcal{H}})$.  The measures of the particle model are in the space $\mathcal{M}_{\mathcal{H}}$ for all values of $u$ and $x$.

\subsection{Distribution of many particles}\label{global:invedge}

This self-contained subsection gives an inclusion-exclusion formula for the Pfaffian.  More details on Pfaffians can be found in \cite{god:93}.  For $1 \leq i \leq n$, let $X_i$ represent the existence of a particle at $x_i$, where $x_i \in \Z$ or $x_i \in \R$.  This is equivalent to the appropriate {\bf a} edge not being covered by a dimer in the underlying dimer configuration.     The complement of  $X_i$ shall be denoted $X_i^c$ which represents  a dimer covering the appropriate {\bf a} edge in the underlying dimer configuration.  Let $\P$ denote the law of observing the particles. Let $K_n(x_1,\dots,x_n)=\{k_{i,j}\}_{i,j=1}^{2n}$ represent the $2n$ by $2n$ anti-symmetric matrix with $k_{2i-1,2i}=v=\P(X_i^c)$ for $1 \leq i \leq n$ and $k_{i,j}$ denote the appropriate inverse Kasteleyn entries multiplied by $x$, the edges weight of an {\bf a} edge,  so that 
\(
	\P(X_1^c,\dots , X_n^c)=\Pf (K_n(x_1, \dots ,x_n)) \label{hp:sc:eq:nop}
\)
The above definition of $K_n$ ensures that the edge weights are encoded into the Pfaffian (by linearity of the Pfaffian).  Define,  $K_n(\hat{x}_1,\dots, \hat{x}_k, x_{k+1},\dots,x_n)=\{ \hat{k}_{i,j} \}_{i,j=1}^{2n}$, with $\hat{k}_{2i-1,2i}=-1+v$ for $1\leq i \leq k$ and $\hat{k}_{i,j}=k_{i,j}$.
Let $I^{sk}_n=\{a_{i,j}\}_{i,j=1}^{2n}$ represent the antisymmetric matrix with $a_{2i-1,2i}=1$ for $i\in\{1,\dots,n\}$ and $a_{i,j}=0$ otherwise. Then

\begin{lemma}\label{pfaffexp}
\( \nonumber
	\P(X_1,\dots , X_n)=\Pf(I^{sk}_n-K_n(x_1,\dots,x_n))=(-1)^n\Pf (K_n(\hat{x}_1, \dots, \hat{x}_n))
\) 
\end{lemma}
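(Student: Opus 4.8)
\emph{Proof proposal.} The plan is to strip the statement down to a single linear–algebraic identity about Pfaffians and then prove that identity by a matching–sum bijection. The second equality is essentially free: since $\Pf(-M)=(-1)^n\Pf(M)$ for a $2n\times 2n$ antisymmetric matrix $M$, and since $-K_n(\hat x_1,\dots,\hat x_n)$ has $(2i-1,2i)$-entry $-(-1+v)=1-v$ and, off the $2\times 2$ diagonal blocks, entry $-k_{i,j}$, one checks that $-K_n(\hat x_1,\dots,\hat x_n)$ \emph{is} the matrix $I^{sk}_n-K_n$ (recall $I^{sk}_n$ is supported on the entries $(2i-1,2i)$, each equal to $1$, so $(I^{sk}_n-K_n)_{2i-1,2i}=1-v$ and $(I^{sk}_n-K_n)_{i,j}=-k_{i,j}$ otherwise). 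Hence $(-1)^n\Pf(K_n(\hat x_1,\dots,\hat x_n))=\Pf(-K_n(\hat x_1,\dots,\hat x_n))=\Pf(I^{sk}_n-K_n)$, and only the first equality $\P(X_1,\dots,X_n)=\Pf(I^{sk}_n-K_n)$ needs work.

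For that I would apply inclusion--exclusion to the complementary events (using $X_i=\overline{X_i^{\,c}}$):
\[
\P(X_1,\dots,X_n)=\sum_{S\subseteq\{1,\dots,n\}}(-1)^{|S|}\,\P\Big(\bigcap_{i\in S}X_i^c\Big),
\]
the term $S=\emptyset$ being $1$. By (\ref{hp:sc:eq:nop}), $\P(\bigcap_{i\in S}X_i^c)=\Pf\big(K_{|S|}(x_i:i\in S)\big)$, which, because the two matrix indices $2i-1,2i$ attached to each particle are consecutive, equals $\Pf(K_n[S])$, the Pfaffian of the principal submatrix of $K_n$ on the rows and columns $\{2i-1,2i:i\in S\}$ (reordering to increasing order is order-preserving on blocks, hence produces no sign). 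So everything reduces to the identity
\[
\Pf(I^{sk}_n-K_n)=\sum_{S\subseteq\{1,\dots,n\}}(-1)^{|S|}\,\Pf(K_n[S]),
\]
which moreover does not use that the entries $k_{2i-1,2i}=v$ are equal --- they may be arbitrary numbers $v_i$.

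To prove this identity I would expand $\Pf(I^{sk}_n-K_n)$ as the signed sum over perfect matchings $\pi$ of $\{1,\dots,2n\}$, write each diagonal-block factor as $1-v_i=1+(-v_i)$, and multiply out. This presents the left side as a sum over pairs $(\pi,T)$, where $T$ is a subset of the ``short'' chords $\{2i-1,2i\}$ used by $\pi$, with weight $\mathrm{sgn}(\pi)(-1)^{|T|}\prod_{i\in T}v_i\prod_{\{a,b\}\text{ non-short in }\pi}(-k_{a,b})$; expanding each $\Pf(K_n[S])$ the same way presents the right side as a sum over pairs $(S,\rho)$, $\rho$ a perfect matching of the index set of $S$, with weight $(-1)^{|S|}\mathrm{sgn}(\rho)\prod_{i\in D(\rho)}v_i\prod_{\text{non-short in }\rho}k_{a,b}$, where $D(\rho)$ is the set of short chords of $\rho$. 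The natural bijection $(\pi,T)\leftrightarrow(S,\rho)$ deletes from $\pi$ the short chords not in $T$, lets $S$ be the set of particles whose indices survive, and lets $\rho$ be what remains; then $T=D(\rho)$ and the non-short chords of $\pi$ and of $\rho$ coincide, so the two weights agree once the signs do. For the signs the key elementary fact is that a chord between consecutive integers $\{2i-1,2i\}$ crosses no other chord, so deleting or inserting such chords changes neither the crossing number of the chord diagram nor the Pfaffian sign; hence $\mathrm{sgn}(\pi)=\mathrm{sgn}(\rho)$, and the remaining factors reconcile because the number of non-short chords of $\rho$ is $|S|-|T|$, so $(-1)^{|T|}(-1)^{|S|-|T|}=(-1)^{|S|}$. (In keeping with the paper's phrasing one could instead induct on $n$, using multiaffinity of the Pfaffian in the $(2i-1,2i)$-entry to peel off one particle; but the ``short chord unused'' term then needs a separate bordered-Pfaffian computation, so I would prefer the bijective route.)

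The step I expect to be the real obstacle is precisely this sign accounting: one must simultaneously track the sign of the permutation underlying a matching, the $(-1)^{|S|}$ from inclusion--exclusion, and the minus signs produced both by rewriting $1-v_i$ and by the off-block entries $(I^{sk}_n-K_n)_{a,b}=-k_{a,b}$. It all collapses because of the consecutive-pairs structure, which is the one feature of the specific matrix $I^{sk}_n$ (rather than a generic antisymmetric perturbation) that the argument exploits.
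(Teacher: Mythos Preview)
Your argument is correct, but it is organized differently from the paper's. The paper proves the identity $\P(X_1,\dots,X_n)=(-1)^n\Pf(K_n(\hat x_1,\dots,\hat x_n))$ directly by induction: it writes $\P(X_1,X_2^c,\dots,X_n^c)=\P(X_2^c,\dots,X_n^c)-\P(X_1^c,\dots,X_n^c)$, expands $\Pf(K_n(\hat x_1,x_2,\dots,x_n))$ along its first row via the recursive definition of the Pfaffian, and observes that the two terms of that expansion are exactly $(-1+v)\Pf(K_{n-1}(x_2,\dots,x_n))$ and $\Pf(K_n(x_1,\dots,x_n))-v\Pf(K_{n-1}(x_2,\dots,x_n))$; subtracting gives $\P(X_1,X_2^c,\dots,X_n^c)=-\Pf(K_n(\hat x_1,x_2,\dots,x_n))$, and iterating finishes. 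In other words, the paper exploits the multiaffinity you mention parenthetically, and the ``separate bordered-Pfaffian computation'' you were worried about is avoided because the row-expansion with the original entry $v$ simply reconstitutes $\Pf(K_n(x_1,\dots,x_n))$.

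Your route instead does full inclusion--exclusion at once and then proves the resulting Pfaffian identity $\Pf(I^{sk}_n-K_n)=\sum_S(-1)^{|S|}\Pf(K_n[S])$ by a chord-diagram bijection, using that chords $\{2i-1,2i\}$ have zero crossing number. This is slightly longer but more structural: it makes transparent why the $I^{sk}_n$ form (equivalently, the consecutive-pair block structure) is exactly what is needed, and it shows without extra comment that the diagonal probabilities $v_i$ need not be equal. The paper's inductive proof is shorter and uses only the recursive Pfaffian expansion; yours gives a cleaner combinatorial picture and a reusable Pfaffian complementation identity.
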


The proof does not require the edges covered to have the same probability of being covered.   The result is stated in this fashion for convenience later in the paper.

\begin{proof} The proof follows by using an iteration, with the key step relying on using the recursive definition of the Pfaffian.  Consider 
\( 
	\P(X_1,X_2^c, \dots , X_n^c)=\P(X_2^c, \dots , X_n^c)-\P (X_1^c, \dots , X_n^c) \label{hp:sc:eq:dec1}
\)
with $\P(X_2^c, \dots , X_n^c)=\Pf ( K_{n-1}(x_2, \dots, x_n))$ and $\P (X_1^c, \dots , X_n^c)$ is given in (\ref{hp:sc:eq:nop}).  On the other hand, by using the recursive definition of the Pfaffian and letting $A=K_n(\hat{x}_1, x_2, \dots, x_n)$
\begin{align}
	\Pf(K_n(\hat{x}_1, x_2, \dots, x_n))= (-1+v) \Pf(A_{\hat{1}, \hat{2}})+\sum_{i=3}^{2n} (-1)^{i} k_{1, i}\Pf (A_{\hat{1},\hat{i}} ) \label{hp:sc:eq:dec2}
\end{align}
where $A_{\hat{1},\hat{i}}$ is the matrix obtained by removing both the 1st and $i^{th}$ row and column.  Notice that $ \Pf(A_{\hat{1}, \hat{2}})$ is exactly $\Pf( K_{n-1}(x_2, \dots, x_n))$ while $v \Pf(A_{\hat{1}, \hat{2}})+\sum_{i=3}^{2n} (-1)^{i} k_{1, i}\Pf (A_{\hat{1},\hat{i}} )=\Pf (K_n(x_1, x_{2},\dots,x_n))$.  Therefore, substituting (\ref{hp:sc:eq:dec2}) into (\ref{hp:sc:eq:dec1}) gives 
\(
	\P(X_1,X_2^c, \dots , X_n^c)=-\Pf(K_n(\hat{x}_1, x_2, \dots, x_n)). \label{hp:sc:eq:iter}
\)
Applying the steps to obtain (\ref{hp:sc:eq:iter}) iteratively gives the result.

\end{proof}

 
 \section{Thermodynamic Limit} 
 \label{sec:exp}
 
By introducing a generic method, we compute the expected number of {\bf b} edges, particles and creations per fundamental domain in the thermodynamic limit.   We use these expressions to prove Theorem \ref{expect:thm:behave}.

 
 \subsection{Expectations for $0<x<1$ and $0<u<1$}
 
   Let $N_b$ denote the number of dimers covering {\bf b} edges per fundamental domain.  By the setup of the model, there are two {\bf b} edges in each fundamental domain. Let $N_{a^c}$ denote the number of {\bf a} edges not covered by a dimer per fundamental domain.   Let $N_X$ denote the number of creation singularities per fundamental domain.  Write $\E[ \cdot]$ for the expected value of each of these quantities with respect to $\mu^{u,x}$.  In this subsection, we find exact results for $\E [N_b]$, $\E[N_{a^c}]$ and $\E[N_X]$.  Figure \ref{sec2:fig:diff} shows a plot of $\E[N_b]$ and $\E[N_{a^c}]$ for $x=0.5$.     

   For $G_n$, define
\( \nonumber
	\overline{N}_b^n=\frac{\sum_{\mbox{{\bf b} edges in }G_n} \mathbb{I}_b}{n^2}=\frac{N_b^n}{n^2},
\)
\( \nonumber
	\overline{N}_{a^c}^n=1-\frac{\sum_{\mbox{{\bf a} edges in }G_n} \mathbb{I}_a}{n^2}=1-\frac{N_{a}^n}{n^2}
\)
and
\( \nonumber
	\overline{N}_{X}^n=\frac{\sum_{X\mbox{ in }G_n} \mathbb{I}_X}{n^2}=\frac{N_X^n}{n^2},
\)
where $N_b^n$ is the number of {\bf b} edges covered by dimers for a dimer covering of $G_n$, $N_{a}^n$ denotes the number {\bf a} edges covered by dimers for a dimer covering of $G_n$ and $N_X^n$ denotes the number of creations for a dimer covering of $G_n$.

  \begin{figure}
\begin{center}
\epsfsize220pt
$$\scalebox{1}{\epsfbox{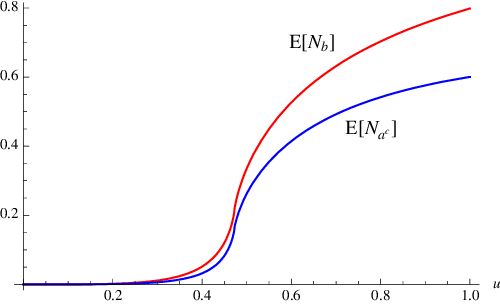}}$$
\caption[Expectations of $a$ and $b$ edges]{A plot of $\E[N_b]$ and $\E[N_{a^c}]$ against $u \in (0,1)$ for $x=0.5$.  Notice that the difference between the two curves is $2 \E[N_X]$.}
\label{sec2:fig:diff}
\end{center}
\end{figure}
 
\begin{thma} \label{expect:thm:expectb} For $0<x<1$ and $0<u<1$
\begin{align} \nonumber
	\E[N_b] 	=  -\frac{2u^2 x^2}{1-u^2 x^2} +	\left\{ \begin{array}{ll} 
					f(x,u,r) & \mbox{if } u<u_c \\
					f(x,u,1) &  \mbox{if } u=u_c \\
					f(x,u,1/r) &   \mbox{if } u>u_c\\ \end{array}\right.
\end{align}
where $f(x,u,r)$ is a continuous function given by
\begin{align} \nonumber
	f(x,u,r)	&=		C(x,u)((u^2 +r^2 u^2) (-1 + x^2)  + r (1 + u^4 x^4 - 2 						u^2 (1 + 2 x^2))\mathcal{K}\left( k \right) \\
		 	&+		C(x,u)(-1 + r^2) u^2 (-1 + x^2)\left(\Pi \left( r k , k   \right) - \Pi\left( \frac{ k }{r} , k \right) \right)
\end{align}
for $r\ne 1$ where 
\(\nonumber
	k=k(r,s)=\frac{s-r}{r s-1},
\)
\(\nonumber
	C(x,u)= \frac{ \sqrt{r s} (1 + u^2 x^2) }{\pi r ( r s-1) u^2 (-1 + x) (1 + x) (-1 + u^2 x^2)},
\)
\(\nonumber
	r=r(u,x)=\frac{1-2u^2 + u^4 x^4 +(1 - u^2 x^2) \sqrt{(1-2u^2+u^2 x^2)(1+2u+u^2 x^2)}}{2 u^2(1+x)^2},
\)
\(\nonumber
	s=s(u,x)=\frac{1-2u^2 + u^4 x^4 +(1 - u^2 x^2) \sqrt{(1-2u^2+u^2 x^2)(1+2u+u^2 x^2)}}{2 u^2 (1-x)^2}, 
\)
$\mathcal{K}(k)$ represents the complete elliptic integral of the first  kind with modulus $k$ and $\Pi(n,k)$ represents the complete elliptic of the   third kind with characteristic $n$ and modulus $k$.
When$u=u_c$, we have  
\( \nonumber
	f(x,u_c,1)= 2 \sqrt{1+x^2} \left( 1- \frac{2}{\pi} \mathrm{arccotan}(x) \right)
\)
Furthermore, when $u=u_i$ we have
\begin{equation*}
      \E[N_b]=1-\sqrt{\frac{1-x}{1+x}}. 
\end{equation*}

\end{thma}
 The elliptic integrals are defined for $u>u_c$ by an analytic continuation argument (see \cite{LAW:89}).

\begin{proof}
As we have defined $\log Z(G_n)$ to be the partition function of the dimer covering on the Fisher lattice embedded in $G_n$, we have that  $-b\frac{\partial}{\partial b} \log Z(G_n)$ is equal to the expected number of {\bf b} edges for the dimer covering on the Fisher lattice on $G_n$.  Therefore, we have
   \begin{equation}
	\E[\overline{N}_b^n]=-\frac{b}{n^2} \frac{\partial}{\partial b} \log Z(G_n).
   \end{equation}
In order to compute the $\E[N_b]$ we have to take limits as $n$ tends to infinity.  By \cite{Li1:10, Li:10}, this limit exists for the dimer model on the Fisher lattice and we have
\begin{equation}
      \E[N_b]=- b\frac{\partial}{\partial b} \log Z
\end{equation}
where $\log Z$ is defined in~\eqref{setup:logZ}.  It remains to compute $- \frac{\partial}{\partial b} \log Z$.  This is done in Lemma  \ref{app:therm:exp}.

\end{proof}

\begin{lemma} \label{expect:prop:Na}
For $0<x<1$ and $0<u<1$,
\begin{align}  \nonumber
	\E[N_{a^c}]	= -\frac{x^2}{1-x^2}	+\left\{ 	\begin{array}{ll}
						g(x,u,r) 	& \mbox{if } u<u_c \\
						g(x,u,1)& \mbox{if } u=u_c\\
						g(x,u,1/r)& \mbox{if } u>u_c
					\end{array} \right.
\end{align}
where 
\begin{align} \nonumber
	g(x,u,r)	&= 		D(x,u) (u^2(1+x^2)+r^2 u^2 (1+x^2)-r(1-2u^2+u^4 x^4)) K 							\left( k  \right) \\
			&+		D(x,u) (-1+r^2) u^2(1+x^2) \left(\Pi \left(r k , k  \right) - 								 \Pi\left(\frac{k }{r} , k \right) \right) \nonumber
\end{align}
for $r \not =1$, where $D(x,u)=C(x,u)(-1+u^2 x^2)/(1+u^2 x^2)$, $k$, $C(x,u)$, $r$, $s$, $K$ and $\Pi$ are defined in Theorem \ref{expect:thm:expectb}.  When $u=u_c$, we have
\begin{align} \nonumber
	g(x,u_c,1)=  \frac{1+ x^2}{1-x^2} \left(1- \frac{2}{\pi } \mathrm{arccotan}(x)) \right).
\end{align}
Furthermore, for $u=u_i$ we have
\begin{equation}
      \E[N_{a^c}]= \frac{x}{1+x}.
\end{equation}
\end{lemma}

\begin{proof}
The proof follows from a similar argument used in Theorem \ref{expect:thm:expectb} and a similar calculation to Lemma \ref{app:therm:exp}.
\end{proof}


The next lemma shows that $\E [N_X]$ can be computed using $\E [N_b]$ and $\E [N_{a^c}]$.

\begin{lemma} \label{expect:lem:sing}
\( \nonumber
	\E[N_X]=\frac{1}{2} \left( \E[N_b]-\E[N_{a^c}] \right).
\)
\end{lemma}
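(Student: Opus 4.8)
The plan is to relate the three expectations through a simple local (per–fundamental–domain) identity involving the dimers at a single triangle decoration, and then integrate this identity against the thermodynamic limit measure $\mu$. The key observation is that at each triangle decoration there are exactly four possible local configurations: either the {\bf a} edge is covered by a dimer (and then the triangle must be matched internally, contributing no {\bf b} edge at that site), or the {\bf a} edge is uncovered, in which case a dimer enters the decoration along a {\bf b} edge and we have either a creation or an annihilation singularity. Counting carefully, one finds that per fundamental domain the number of uncovered {\bf a} edges $N_{a^c}$ decomposes as the number of creation singularities plus the number of annihilation singularities, while the number of covered {\bf b} edges $N_b$ counts each singularity together with the {\bf b} edges that are part of through–going particle trajectories.

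First I would set up the bookkeeping precisely. Writing $N_X$ for the number of creations and $N_{X'}$ for the number of annihilations in a fundamental domain, I would argue that $N_{a^c} = N_X + N_{X'}$, since an uncovered {\bf a} edge forces a singularity of one of the two types and vice versa. Next I would express $N_b$ in terms of the same data: each creation or annihilation uses exactly one {\bf b} edge locally, and every remaining {\bf b} dimer is part of a trajectory that passes through a triangle decoration where the {\bf a} edge is covered; accounting for the fact that each such through–trajectory enters and exits via {\bf b} edges, one gets a relation of the shape $N_b = N_X + N_{X'} + (\text{through-trajectory contribution})$. The precise combinatorial identity I am aiming for is $N_b - N_{a^c} = 2N_X - (N_X - N_{X'})$ or, more to the point, $N_b - N_{a^c} = 2N_X$ once one uses the fact that $N_X$ and $N_{X'}$ are symmetric in a way that the trajectory terms cancel against $N_{X'}$; this is exactly the statement visible in Figure~\ref{sec2:fig:diff}, where the gap between the two curves is labelled $2\E[N_X]$.

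The cleanest route, and the one I would ultimately take, is to avoid the finite-$n$ combinatorics altogether and instead argue at the level of expectations by a symmetry/parity argument combined with the local statistics formula. By the up–down reflection symmetry of the Fisher lattice weights, $\E_\mu[N_X] = \E_\mu[N_{X'}]$, so $\E_\mu[N_{a^c}] = 2\E_\mu[N_X]$ would be false in general — rather, what holds is $\E_\mu[N_{a^c}] = \E_\mu[N_X] + \E_\mu[N_{X'}] = 2\E_\mu[N_X]$ only if every uncovered {\bf a} edge is a singularity, which it is. Then the identity to prove reduces to showing $\E_\mu[N_b] - \E_\mu[N_{a^c}] = 2\E_\mu[N_X]$, i.e. $\E_\mu[N_b] = \E_\mu[N_{a^c}] + 2\E_\mu[N_X] = 2\E_\mu[N_X] + 2\E_\mu[N_X]$? — no: the correct reading is $\E_\mu[N_b] - \E_\mu[N_{a^c}] = 2\E_\mu[N_X]$ directly, because $N_b$ exceeds $N_{a^c}$ by twice the number of creations, the factor two coming from the two extra {\bf b} edges a creation introduces relative to a plain uncovered {\bf a} edge. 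I would make this rigorous by expanding both $N_b$ and $N_{a^c}$ as sums of indicator functions of the relevant edge events at a fixed decoration, taking expectations via the local statistics formula \eqref{sec2:localstats}, and checking that the difference of the resulting Pfaffian expressions equals twice the $12\times 12$ Pfaffian defining $\E_\mu[N_X]$.

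The main obstacle is getting the combinatorial constant exactly right: one must be careful that each singularity is counted once and each through-trajectory {\bf b}-dimer contributes consistently to $N_b$, and that nothing is double counted across the two {\bf b} edges and two decoration slots in a fundamental domain. I expect the bulk of the work to be this careful local enumeration at one triangle decoration — listing the handful of admissible local matchings, tabulating the values of $N_b$, $N_{a^c}$, $N_X$ in each, and reading off the pointwise identity $N_b - N_{a^c} = 2N_X$ which then passes to expectations by linearity. Once the pointwise identity is in hand, taking $\E_\mu$ of both sides is immediate and finishes the proof.
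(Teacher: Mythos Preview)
Your proposal rests on a misreading of the local combinatorics, and the central identities you write down are false. You assert that an uncovered {\bf a} edge forces a singularity (``$N_{a^c}=N_X+N_{X'}$, since an uncovered {\bf a} edge forces a singularity of one of the two types and vice versa''). In fact the opposite holds: an uncovered {\bf a} edge is precisely a \emph{particle} location (this is the convention stated just before the definition of creations/annihilations), whereas at a singularity the triangle decoration carries no dimer, so the triangle's apex vertex must be matched along the {\bf a} edge --- i.e.\ the {\bf a} edge is \emph{covered} at every creation and every annihilation. Likewise your claim that ``each creation or annihilation uses exactly one {\bf b} edge locally'' is incorrect: at a creation the two non-apex vertices of the upper triangle are both matched outward, so \emph{both} {\bf b} edges of that fundamental domain are covered. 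Consequently neither $N_{a^c}=N_X+N_{X'}$ nor $\E_\mu[N_{a^c}]=2\E_\mu[N_X]$ is valid, and the chain of identities you build on them collapses.

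The paper's argument is the clean pointwise bookkeeping you were groping towards, but with the roles corrected. Let $\hat N_b$ be the number of {\bf b}-dimers in the fundamental domain that do not belong to a creation. A short case analysis at the upper triangle $v_4v_5v_6$ shows that exactly one {\bf b} edge there is covered if and only if $v_4$ is matched inside the triangle, i.e.\ if and only if the {\bf a} edge $v_3v_4$ is uncovered; if zero or two {\bf b} edges are covered then $v_4$ is matched to $v_3$. Hence $\hat N_b=N_{a^c}$ pointwise. On the other hand, both {\bf b} edges of the fundamental domain are covered precisely when the upper triangle is a creation, so $N_b-\hat N_b=2N_X$ pointwise. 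Taking expectations of these two pointwise identities gives the lemma immediately. If you want to salvage your write-up, replace the false decomposition $N_{a^c}=N_X+N_{X'}$ by this local case analysis at a single triangle; no Pfaffian computation is needed.
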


\begin{proof}

For $G_n$, the number of dimers covering {\bf b} edges which do not belong to creations, per fundamental domain, is equal to  $\overline{N}_{a^c}^n$.   We have that $\overline{N}_{b}^n -\overline{N}_{a^c}^n$ is equal to the proportion of fundamental domains with dimers covering both {\bf b} edges of that fundamental domain.  Hence, we have $2\overline{N}_{X}^n= \overline{N}_{b}^n-\overline{N}_{a^c}^n$.  Taking expectations gives
\(
	\E[\overline{N}_{X}^n]=\frac{1}{2} \left( \E [\overline{N}_{b}^n]-\E [ \overline{N}_{a^c}^n] \right).
\)
By \cite{Li1:10, Li:10}, taking limits as $n$ tends to infinity, the left-hand-side converges to $\E [ N_X]$ and the right-handside converges to $(\E[N_b]-\E[N_{a^c}])/2$.

\end{proof}

As a direct consequence, we have:
\begin{coro} \label{expect:coro:NX}
For $0<x<1$ and $0<u<1$
\begin{align} \nonumber
\E [N_{X}]	= - \half \frac{x^2(1-2u^2+u^2 x^2)}{(1-x^2)(1- u^2 x ^2)} +	\left\{ 	\begin{array}{ll}
						\half \left( f(x,u,r)-g(x,u,r) \right)	& \mbox{if } u<u_c \\
						\half \left(f(x,u,1)-g(x,u,1)\right)	& \mbox{if } u=u_c\\
						\half \left(f(x,u,1/r)-g(x,u,1/r)\right)& \mbox{if } u>u_c
					\end{array} \right.
\end{align}
where the functions $f$ and $g$ are defined in Theorem \ref{expect:thm:expectb} and Lemma \ref{expect:prop:Na}.
\end{coro}

We conclude this subsection with some remarks about the model and the modulus of the elliptic integrals given in Theorem~\ref{expect:thm:expectb}.  The terms $r(u,x)$ and $s(u,x)$ are two of the roots of a quartic polynomial, $A(v)$ which is defined ~\eqref{app:expect:thmproof:av}.  The other two roots of $A(v)$ are given by $1/r(u,x)$ and $1/s(u,x)$.  More details of the roots and $A(v)$ can be found in Appendix~\ref{app:secA}.  The modulus of the elliptic integrals is real when $u<u_i$ and is complex when $u>u_i$.  This follows from the fact that for $u<u_i$, $r(u,x)$ and $s(u,x)$ are real and for $u>u_i$ we have $1/r(u,x)$ and $s(u,x)$ are complex.  Note that when $u=u_c$, we have $r(u,x)=1$ and we no longer have any elliptic integrals in the expression of $\E[N_b]$.  Furthermore, when $u=u_i$, the modulus of the elliptic integrals is zero.  Therefore, the model has five cases, namely; $u<u_c$, $u=u_c$, $u_c<u<u_i$, $u=u_i$ and $u>u_i$.

\subsection{Expectations in the limit $x\to 0$}

In this subsection, we first state two Lemmas which compute the expansions of  $\E [N_b]$ and  $\E [N_X]$ around $x=0$.  Their proofs are given in Appendix~\ref{app:localprobs}.  We also give the proof of Theorem \ref{expect:thm:behave}.

\begin{lemma} \label{expect:lem:expand}
\begin{align}
\E [N_b] 	= 	\left\{		\begin{array}{ll}
					\frac{2u^2(4u^2-1+\sqrt{1-4u^2})}{1-4u^2} x^2+O(x^3)  & \mbox{if } u<u_c \mbox{ (fixed)} \\
					\frac{2 x}{\pi} +O(x^2)	& \mbox{if } u=u_c \\
					x+O(x^2)	&\mbox{if } u=u_i	\\
					2- 2\frac{\theta}{\pi} +O(x) & \mbox{if } u>u_i  \mbox{ (fixed)} \\
					\end{array} \right.
\end{align}
where $\t=\arccos(-1/(2u))$.  Furthermore, we have $\E [N_b]=\E [N_{a^c}] +O(x^2)$ for $u=u_c$ and $u=u_i$ and $\E [N_b]=\E [N_{a^c}] +O(x)$ for $u>u_i$.
\end{lemma}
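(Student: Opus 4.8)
The strategy is to take the closed-form expressions for $\E_\mu[N_b]$ from Theorem \ref{expect:thm:expectb} and $\E_\mu[N_{a^c}]$ from Corollary \ref{expect:prop:Na}, and carry out a careful asymptotic expansion as $x \to 0$, treating each of the four regimes of $u$ separately. The key preliminary step is to understand the $x \to 0$ behaviour of the auxiliary quantities $r = r(x,u)$ and $s = s(x,u)$ defined in Theorem \ref{expect:thm:expectb}, together with the moduli and characteristics of the elliptic integrals $\mathcal K$ and $\Pi$ that appear: namely $(r-s)/(rs-1)$ and $r(r-s)/(rs-1)$. So the first thing I would do is plug $x = 0$ (or a small-$x$ expansion) into the explicit algebraic formulas for $r$ and $s$ and extract their leading behaviour; for fixed $u < u_c$ one expects $r,s$ to converge to finite limits bounded away from $1$, whereas near $u = u_c$ (where $r \to 1$) and near $u = u_i$ (where $A(v)$ degenerates to a quadratic) the behaviour is singular and must be handled with more care.

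\medskip

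The four cases then go as follows. For \emph{fixed} $u < u_c$: since $r, s$ and the elliptic moduli all have finite nonsingular limits as $x \to 0$, and the prefactor $C(x,u)$ carries an explicit factor that vanishes like $x^2$ (coming from $(-1+x)(1+x)$ in its denominator being bounded away from zero while the numerator $(1+u^2x^2)$ tends to $1$ — one should check the $u$-dependence cancels appropriately), one simply expands $f(x,u,r)$ and finds it is $O(x^2)$; the additive terms $2u^2x^2/(-1+u^2x^2)$ are manifestly $O(x^2)$. For $u = u_c$: here one uses the explicit formula for $f(x,u,1)$, and the leading term comes from expanding $-\frac{1}{2\pi i}\log\frac{x+i}{x-i} + 1$; note $\log\frac{x+i}{x-i} = \log(-1) + O(x) = \pm i\pi + O(x)$, so this bracket is $O(x)$ — wait, more carefully, $\frac{x+i}{x-i} \to -1$ so the log tends to $i\pi$ and the bracket tends to $-\frac{1}{2\pi i}(i\pi) + 1 = \frac{1}{2} + O(x)$, not $O(x)$; so one must also expand the algebraic prefactor $(-1-x^2+\sqrt{1+x^2})(-2-x^2+2\sqrt{1+x^2})/(-1+\sqrt{1+x^2})^3$, whose numerator is $O(x^4)$ while $(-1+\sqrt{1+x^2})^3 = (x^2/2 + O(x^4))^3 = O(x^6)$, giving a ratio of order $x^{-2} \cdot x^4 = x^2$... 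I need to recheck: $(-1-x^2+\sqrt{1+x^2}) = -x^2 + x^2/2 + O(x^4) = -x^2/2 + O(x^4)$ and $(-2-x^2+2\sqrt{1+x^2}) = -x^2 + x^2 + O(x^4) = O(x^4)$, product $= O(x^6)$, over $O(x^6)$ gives $\Theta(1)$; so the product with the $\frac12 + O(x)$ bracket is $\Theta(1)$, contradicting the claimed $\frac{2x}{\pi}$. This means I have the wrong value $r=1$ is not the right substitution, or more likely I must be more careful — at $u = u_c(x)$ which itself depends on $x$, one substitutes $u = u_c = (-1+\sqrt{1+x^2})/x^2$ into $f(x,u,1)$, so $u^2x^2 = (-1+\sqrt{1+x^2})^2/x^2$ is itself $\Theta(x^2)$, and everything must be re-expanded treating $u$ as this specific function of $x$. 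This regime-by-regime bookkeeping, with $u$ sometimes a function of $x$, is the crux.

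\medskip

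For $u = u_i$: one uses the special reduction where $A(v)$ factors into a quadratic, so $\E_\mu[N_b]$ is computed by pure residue calculus (as noted in the proof of Theorem \ref{expect:thm:expectb}); substituting $u = u_i = (1-\sqrt{1-x^2})/x^2 = \frac12 + O(x^2)$ and expanding gives $x + O(x^2)$. For \emph{fixed} $u > u_i$: one expands $f(x,u,1/r)$ as $x \to 0$; the elliptic integrals degenerate as $x \to 0$ (the modulus tending to $0$ or $1$), producing an elementary expression whose leading term, after simplification, is $2 - 2\arccos(-1/(2u))/\pi$. The final claim — that the first-order term of $\E_\mu[N_{a^c}]$ matches that of $\E_\mu[N_b]$ — follows since by Lemma \ref{expect:lem:sing} their difference is $2\E_\mu[N_X]$, and the explicit additive/algebraic corrections distinguishing $f$ from $g$ in Theorem \ref{expect:thm:expectb} and Corollary \ref{expect:prop:Na} are each $O(x^2)$ (the leading elliptic parts of $f(x,u,r)$ and $g(x,u,r)$ must therefore be shown to agree to first order, which is a direct comparison of the two integral formulas).

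\medskip

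The main obstacle will be the interchange of limits in the cases $u = u_c$ and $u = u_i$: as the excerpt itself warns, ``taking the limit through the underlying integrals cannot always be done,'' because the elliptic integrals become singular precisely in these regimes (modulus $\to 1$, logarithmic blow-up of $\mathcal K$). The safe route is to \emph{not} pass to the limit inside the integral but instead to use the exact closed forms already established — $f(x,u,1)$, $g(x,u,1)$, and the quadratic-reduction residue formula at $u = u_i$ — and expand those scalar expressions directly, which sidesteps the interchange problem entirely. I would double-check the leading-order computation at $u_c$ with a numerical evaluation at several small $x$ to make sure the coefficient $2/\pi$ comes out right, since the algebra there involves a delicate cancellation between the algebraic prefactor and the $1 - \frac{1}{2\pi i}\log\frac{x+i}{x-i}$ factor once $u = u_c(x)$ is substituted.
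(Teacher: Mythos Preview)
Your overall strategy --- expand the closed forms of Theorem \ref{expect:thm:expectb} case by case --- matches the paper's approach, and your treatment of $u=u_c$ and $u=u_i$ (expand $f(x,u_c(x),1)$ directly; use the quadratic reduction of $A(v)$ at $u_i$) is essentially what the paper does. But the two ``fixed $u$'' cases contain genuine gaps.

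\medskip
\textbf{The case $u<u_c$ fixed.} Your claim that the prefactor $C(x,u)$ vanishes like $x^2$ is incorrect: as $x\to 0$ the factor $(-1+x)(1+x)(-1+u^2x^2)$ in the denominator tends to $+1$, and the numerator to $\sqrt{rs}$, so $C(x,u)$ has a \emph{nonzero} finite limit. The actual mechanism is different. The modulus of the elliptic integrals satisfies
\[
k=\frac{s-r}{rs-1}=\frac{4u^2}{\sqrt{1-4u^2}}\,x+O(x^2),
\]
so $k=O(x)$, and one then feeds in the small-modulus expansions $\mathcal K(k)=\tfrac\pi2+O(k^2)$ and $\Pi(rk,k)-\Pi(k/r,k)=O(k)$. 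Only after also expanding $r$ and $s$ themselves in $x$ and collecting terms do you see the $O(1)$ contribution cancel, leaving $O(x^2)$. Your ``finite nonsingular limits'' remark is not wrong (the limit is $k=0$), but you have not identified that $k\to 0$ is what drives the result, and the cancellation is not in $C$ but in the bracketed combination of elliptic integrals.

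\medskip
\textbf{The case $u>u_i$ fixed.} Here your description ``the modulus tending to $0$ or $1$'' is wrong. For fixed $u>1/2$, the roots $r,\bar r$ of $A(v)$ are complex conjugates, and under the analytic continuation implicit in Theorem \ref{expect:thm:expectb} the effective modulus is purely imaginary and \emph{diverges}: one writes it as $i/k'$ with
\[
k'=\frac{|s/r-1|}{|s-1/r|}=\frac{4u^2}{\sqrt{4u^2-1}}\,x+O(x^2).
\]
The required expansions are therefore not the textbook $k\to 0$ or $k\to 1$ asymptotics but the large-imaginary-modulus expansions
\[
\mathcal K(i/k')=k'\log(4/k')+O(k'^2),\qquad \Pi(ir/k',i/k')-\Pi(i/(rk'),i/k')=k'\log(1/r)+O(k'^2),
\]
which the paper establishes separately. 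It is the $\log(1/r)$ factor, combined with the $x\to 0$ limit of $r$, that produces the $\arccos(-1/(2u))$ term. Without recognising this imaginary-modulus regime you cannot recover the $2-2\theta/\pi$ answer.

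\medskip
Your plan for the final sentence (compare $f$ and $g$ directly via Lemma \ref{expect:lem:sing}) would work but is slightly different from the paper, which simply repeats the same elliptic-integral expansions for $g(x,u,\cdot)$ and observes the leading terms coincide.
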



The proof is given in Appendix~\ref{app:localprobs}.  It is based on the analysis of the roots of $A(v)$, defined in~\eqref{app:expect:thmproof:av}, and the expansions of the elliptic integrals in Theorem~\ref{expect:thm:expectb}.   The expansions of the elliptic integrals we used do not hold when $u$ tends to $1/2$ simultaneously as $x$ tends to 0. 
The expansion for $\E [N_X]$ can be also computed for certain values of $u$.  The proof of the following lemma is given in Appendix~\ref{app:localprobs}.

\begin{lemma} \label{exp:lem:sing}
\begin{align} \nonumber
	\E [N_X]	= \left\{	\begin{array}{ll}
				\frac{x^2}{2}(1-2u^2-\sqrt{1-4u^2})+O(x^3) & \mbox{if } u<u_c \mbox{ (fixed)}\\
				\frac{x^2}{4}+O(x^3) 	& \mbox{if } u=u_c \\
				\frac{x^2}{4} +O(x^3)& \mbox{if } u=u_i	 \\
				(-\frac{1}{\pi} \sqrt{4u^2-1}) x^2 \log x +O(x^2). &\mbox{if } u>u_i \mbox{ (fixed)}
				\end{array} 	\right.
\end{align}
\end{lemma}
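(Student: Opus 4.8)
The starting point is the exact closed form for $\E_\mu[N_X]$ from Corollary \ref{expect:coro:NX}, equivalently the identity $\E_\mu[N_X]=\half(\E_\mu[N_b]-\E_\mu[N_{a^c}])$ of Lemma \ref{expect:lem:sing} together with Theorem \ref{expect:thm:expectb} and Corollary \ref{expect:prop:Na}. Since Lemma \ref{expect:lem:expand} already records that the leading-order terms of $\E_\mu[N_b]$ and $\E_\mu[N_{a^c}]$ agree, those contributions cancel in the difference, so in each of the four regimes the asymptotics of $\E_\mu[N_X]$ are governed entirely by the second-order behaviour of the two expectations. The whole proof is thus a careful second-order expansion of the formulas in Theorem \ref{expect:thm:expectb} and Corollary \ref{expect:prop:Na}; as in the proof of Lemma \ref{expect:lem:expand}, one cannot pass $x\to 0$ through the defining double integrals, so the expansions are carried out at the level of the elliptic integrals.

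For $u<u_c$ fixed I would substitute the expansions already used in the proof of Lemma \ref{expect:lem:expand}, namely $\mathcal{K}(k)=\frac{\pi}{2}+\frac{k^2\pi}{4}+O(k^4)$ and $\Pi(rk,k)-\Pi(k/r,k)=\frac{k\pi(-1+r^2)}{4r}+\frac{3k^2\pi(-1+r^4)}{16r^2}+O(k^3)$ with modulus $k=\frac{4u^2x}{\sqrt{1-4u^2}}+O(x^2)$, into $f(x,u,r)$ and $g(x,u,r)$, simultaneously expanding $r$, $s$, $C(x,u)$ and $D(x,u)$ to the needed order in $x$, and then add the explicit rational remainders $\frac{2u^2x^2}{-1+u^2x^2}$ and $-\frac{x^2}{1-x^2}$. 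Collecting the coefficient of $x^2$ in $\half((f-g)+(\text{remainders}))$ and simplifying with the $x=0$ values of $r$ and $s$ (explicit algebraic functions of $u$) should yield $\half(1-2u^2-\sqrt{1-4u^2})x^2$, the $O(x^3)$ error being controlled by the $O(k^3)$ and $O(k^4)$ remainders above. For $u>u_i$ fixed, the same scheme applies with the degenerate, logarithmic expansions of $\mathcal{K}$ and $\Pi$ from the proof of Lemma \ref{expect:lem:expand} (with $k'=\frac{4u^2x}{\sqrt{4u^2-1}}+O(x^2)$); the $k'\log(2/k')$ term is precisely what forces the surviving order to be $x^2\log x$ rather than $x^2$, and its coefficient reads off as $-\frac{1}{\pi}\sqrt{4u^2-1}$.

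For the two distinguished values of $u$ I would use the closed forms directly. At $u=u_c$, Theorem \ref{expect:thm:expectb} and Corollary \ref{expect:prop:Na} give $f(x,u_c,1)$ and $g(x,u_c,1)$ explicitly in terms of $\log\frac{x+i}{x-i}$ and $\sqrt{1+x^2}$; Taylor expanding these (using $\log\frac{x+i}{x-i}=i\pi-2ix+O(x^3)$ and $\sqrt{1+x^2}=1+\frac{x^2}{2}+O(x^4)$) and adding the rational remainder, the leading terms cancel by Lemma \ref{expect:lem:expand} and the $x^2$ coefficient of $\half(f(x,u_c,1)-g(x,u_c,1)+\text{remainder})$ comes out to $\frac14$. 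At $u=u_i$ the algebraic function $A(v)$ from the proof of Theorem \ref{expect:thm:expectb} degenerates to a quadratic (the branch cuts collapse to points), so $\E_\mu[N_b]$ and $\E_\mu[N_{a^c}]$ are rational functions of $x$ obtained purely by residue calculus, cf.\ the $u=u_i$ case of Lemma \ref{expect:lem:expand}; forming $\half(\E_\mu[N_b]-\E_\mu[N_{a^c}])$ and Taylor expanding gives $\frac{x^2}{4}+O(x^3)$.

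The main obstacle is the first-order cancellation: because the leading terms of $\E_\mu[N_b]$ and $\E_\mu[N_{a^c}]$ coincide, the answer is sensitive to second-order data from several sources simultaneously --- the $k^2$ terms of $\mathcal{K}$ and $\Pi$, the $O(x^2)$ corrections to $r$ and $s$, and the $O(x^2)$ corrections to the prefactors $C$ and $D$ --- and these must be tracked together without error; moreover, for $u>u_i$ one must correctly recognise that the nearly-cancelling logarithmic parts of $\mathcal{K}$ leave a genuine $x^2\log x$ term. Everything past that is bookkeeping.
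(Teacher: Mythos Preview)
Your approach is correct and leads to the same result, but the paper's argument is organized more economically and sidesteps precisely the obstacle you flag at the end.

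Rather than expanding $f(x,u,r)$ and $g(x,u,r)$ to second order separately and then subtracting, the paper first forms the difference $\half(f-g)$ and observes that it can be written as
\[
c_1(x,u)\,\mathcal{K}(k)+c_2(x,u)\bigl(\Pi(rk,k)-\Pi(k/r,k)\bigr)+O(x^2),
\]
where the crucial point is that the coefficients $c_1,c_2$ are themselves small: $O(x^2)$ for fixed $u<u_c$ and $O(x)$ for fixed $u>u_i$. This happens because the $\mathcal{K}$- and $\Pi$-coefficients in $f$ and $g$ agree at $x=0$ (a direct algebraic check using $D=C\,(-1+u^2x^2)/(1+u^2x^2)$), so the leading-order cancellation between $\E_\mu[N_b]$ and $\E_\mu[N_{a^c}]$ is absorbed at the coefficient level \emph{before} the elliptic integrals are expanded. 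Consequently the \emph{first-order} expansions of $\mathcal{K}$ and $\Pi$ already recorded in the proof of Lemma~\ref{expect:lem:expand} suffice, with no need to track the $k^2$ terms of the elliptic integrals or the $O(x^2)$ corrections to $r,s,C,D$ against one another. Your scheme of expanding $f$ and $g$ to second order independently works, but it generates many second-order contributions destined to cancel and is correspondingly more error-prone. For $u=u_c$ and $u=u_i$ your treatment matches the paper's.
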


We can now prove Theorem \ref{expect:thm:behave}.

\begin{proof}[Proof of Theorem \ref{expect:thm:behave}]
The probability of any cylinder set under $\mu^{u,x}$ can be computed, using the Kasteleyn method.  Taking the limit as $x \to 0$ gives the probability of the cylinder set under the measure $\mu^{u,0}$.  The family of measures $\mu^{u,x}$ are automatically tight by constructing the measures $\mu^{u,x}$ through the thermodynamic limit.  It remains to establish the behavior under $\mu^{u,0}$, 

For $u \in (0,1)$, using Markov's inequality we obtain
\( \label{exp:main:cheby}
	\lim_{x \to 0} \mu^{u,x}(N_X=0)= \mu^{u,0}(N_X=0) =1.
\)
As the distribution of the creations is the same as the distribution of the annihilations, 
(\ref{exp:main:cheby}) implies with probability 1, there are no annihilations or creations for $u \in (0,1)$.   By Lemma \ref{expect:lem:expand}, in the limit $x \to0$ and $u > 1/2$, particles have density $2- 2 \t/ \pi$.  For $u \leq 1/2$, we can apply Markov's inequality to give
\( \nonumber
	\mu_{u,0}(N_{a_c}=0) =1
\)
\end{proof}


We end this section with a `duality statement' which has important implications in understanding the underlying Ising model. For the following lemma, we  drop the assumption that $u \in (0,1)$.  Let $\mathcal{A}$ denote any configuration of {\bf a} edges not covered by dimers in the dimer configuration
\begin{lemma} \label{exp:lem:dual}
For $u<1/x$ and $x>0$, we have
\( \nonumber
	\mu^{u,x}(\mathcal{A})= \mu^{1/(u x^2)}(\mathcal{A}).
\)
\end{lemma}

From this lemma, one can immediately determine that the two critical points $(-1+\sqrt{1+x^2})/x^2$ and $(1+\sqrt{1+x^2})/x^2$ are equivalent (in distribution) in terms of the particle model because  $(1+\sqrt{1+x^2})/x^2=1/(u_c x^2)$. 

Furthermore, if we choose $x=1$ and $u=u_c$, the particle model is equivalent to the two dimensional critical ferromagnetic Ising model on the square grid.  Indeed, choosing $x=1$, there is no interaction from the {\bf a} edges because the spins on each horizontal line have no contribution to the Hamiltonian (\ref{intro:isinghamiltonian}).   By Lemma \ref{exp:lem:dual}, when $x=1$ the particle model with $u=u_c$ is equivalent to the particle model with $1/u_c$.  When we take $u=1/u_c=\sqrt{2}+1$, we obtain the ferromagnetic Ising model on the square grid  at critical temperature (due to the anti-ferromagnetic and ferromagnetic Ising models are dual to each other on the square grid).

\begin{proof}[Proof of Lemma \ref{exp:lem:dual}] 

For this proof of this lemma, we keep track of the chosen parameters by writing $[\alpha,\beta]$ after each expression, where $\log \alpha^{-1}/\log \beta^{-1}$ is the anisotropy and $\log 1/\beta$ is the temperature of the Ising model, i.e. the weight of the {\bf a} edge is $\alpha$ and the weight of the {\bf b} edge is $b= \alpha \beta$ in this case.  Let $C(z,w)_{i,j}[u,x]$ denote the cofactor matrix of $K(z,w)[u,x]$.  The relevant entries of $C(z,w) [u,x]$ are listed in Appendix \ref{app:cofactor}.  Finally, let $K^{-1}(v_i,v_j(n,m))[u,x]$ for $i, j \in \{3,4 \}$ be the inverse Kasteleyn entry between vertex $v_i$ and $v_j(n,m)$.   The weight of the {\bf a} edges is the same in each regime. It remains to check that Pfaffian for computing local probabilities remains unchanged too.  

By a direct substitution we have
\( \label{eqn:charpolyduality}
	P(z,w)[u,x]=u^4 x^4 P(-z,-w)[1/(u x^2),x]. 
\)
For $i,j \in \{3,4\}$, by direct substitution, we also have 
\(\label{eqn:minorduality}
	C(z,w)_{i,j}[u,x]= u^4 x^4 C(-z,-w)_{i,j}[1/(u x^2),x]. 
\)
As we have 
\( \nonumber
K^{-1}(v_i,v_j(n,m)) = \frac{1}{(2\pi i)^2} \int_{|z|=1} \int_{|w|=1} \frac{C(z,w)_{i,j}}{P(z,w)} z^n w^m \frac{dw}{w} \frac{dz}{z},
\)
by using (\ref{eqn:charpolyduality}) and (\ref{eqn:minorduality}), we have
\( \nonumber
	K^{-1}(v_i,v_j(n,m))(1/(u x^2), x)=(-1)^{n+m}K^{-1}(v_i,v_j(n,m))(u,x).
\)	
In other words, the Fourier representations of these inverse Kasteleyn entries differ by a factor of $(-1)^{n+m}$.  The factor $(-1)^{n+m}$ always affects an even number of rows and columns and so the Pfaffian remains unchanged by this factor.
\end{proof}




 \section{The particle model for $u=u_c$} \label{sec:hpart}

This section focuses on the critical case in which $u=u_c=\criticall$. A realization of the particle model at this choice of parameterization is depicted in Figure \ref{uc:fig:sim} for $x=0.1$.  

Although the results in this paper for $u=u_c$ do not rely on the choice of vertical scaling, we will impose that the scaling window for $u=u_c$ satisfies Donsker's scaling (i.e. scaling space by the square root of time) and gives \emph{non-trivial paths}, that is, paths with length  bigger than $\e>0$ under the rescaling.  A heuristic argument for our choice of re-scaling is as follows:  suppose we choose the scaling window $(x^\alpha, x^{2 \alpha})$.  It suffices to compute  the expected number of dimers covering {\bf b} edges in a box of size $(x^{-\alpha}, x^{-2 \alpha})$.  Using Lemma \ref{expect:lem:expand}, this is $\Theta(x^{1-3\alpha})$ because the number of fundamental domains in a box of size $(x^{-\alpha}, x^{-2 \alpha})$ is $x^{-3\alpha}$.  We see non-trivial paths in the scaling window $(x^{-\alpha},x^{-2\alpha})$ if the expected number of dimers covering {\bf b} edges is of order $x^{-2 \alpha}$.   Therefore, we choose $\alpha=1$.

This section is organized as follows: first, we introduce local dynamics in the thermodynamic limit which gives some intuition for the model at $u=u_c$. 
We then focus on finding the appropriate correlation kernel in the scaling window $(x,x^2)$ and prove Theorem \ref{intro:mainthm} for $u=u_c$ (or $\g=-1$).  

  \begin{figure}
\begin{center}
\epsfsize220pt
$$\scalebox{1}{\epsfbox{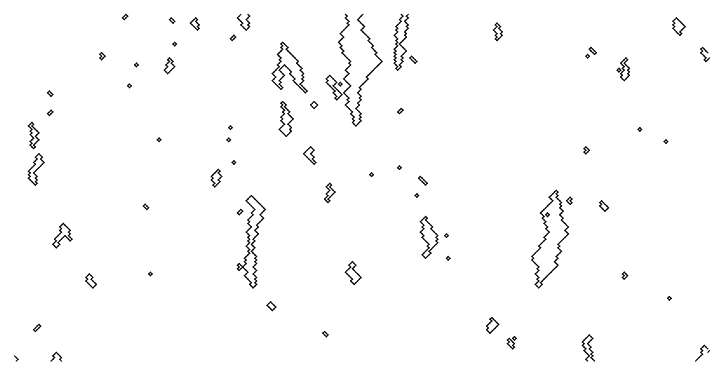}}$$
\caption[A simulation of the particle model: $u=u_c$ and $x=0.1$]{A simulation of the particle model on a 100 by 100 grid with $u=u_c$ and $x=0.1$ made using Glauber dynamics.}
\label{uc:fig:sim}
\end{center}
\end{figure}

\subsection{Local Dynamics}\label{sec:uc:localdynamics}

This subsection concentrates on the behavior of the model at $u=u_c$ before the scaling window is taken. We compute random walk estimates for the trajectories of a pair of particles separated by distance $n$, where $n$ is considered to be smaller than order $1/x$, to provide some intuition of the model before the scaling window is taken.

The entries of the inverse Kasteleyn matrix for $u=u_c$ are linear combinations of $H_u$ defined in~\ref{definitionofH}. We have
\begin{lemma}\label{hp:uc1lemma}
For $m\geq n$ and $m=o(1/x)$ 
\begin{align}
H_{u_c} (m,-n)= \sum_{i=0}^3 x^{i-2} F_i(m,n)+O(x^2)
\end{align}
where 
\(
	F_0(m,n)=(-1)^{m-n} (m+n),
\)
 
\(
	F_1(m,n)=\frac{2 (-1)^{m-n} (m+n)^2 }{\pi }
\)
\(
 F_2(m,n)=-\frac{1}{6} (-1)^{m-n} (m+n) \left(1+2 m^2+4 m n+2 n^2\right)  
\) 
and
\begin{align}
	F_3(m,n)&= \frac{(-1)^{m-n}}{9 \pi} (m + n) \nonumber \\
	&\times (-7 - 20 m^2 + 12 m^4 + 200 m n + 48 m^3 n - 20 n^2 + 
   72 m^2 n^2 + 48 m n^3 + 12 n^4).
\end{align}

\end{lemma}

The proof can be found in Appendix \ref{app:uc:local}.  The expansion also holds for $n \geq m$.   Recall that $v_i (n,m)$ denotes the vertex $v_i$ in the $(-n,-m)$ fundamental domain.  For $(y,t) \in( \mathbb{Z} \times \mathbb{Z}) \cup ( (2\mathbb{Z}+1) \times (2\mathbb{Z}+1))$, let  $U(y,t)$ denote the event that there is a particle at $(-y,t)$ where we think of $y$ as space and $t$ as time. Let $\P$ denote the law of the particles under $\mu_{u_c,x}$.

\begin{prop} \label{hpart:dis:jtprob}
The joint probability of two particles separated by distance $n$ at time $t$ with $n=o(1/x)$, is given by
\(
	\P (U (0,t), U(n,t))=x^2-\frac{4+8n}{ \pi } x^3+O(x^4).
\)
\end{prop}

\begin{proof}
The proof of this proposition is a calculation using the inclusion-exclusion formula given in Lemma \ref{pfaffexp}.   Using Lemma \ref{hp:uc1lemma} and the cofactor matrix given in Appendix \ref{app:cofactor}, we have that
\begin{align}
	K^{-1} (v_3,v_3(n,-n)) &= u^2 x^2 ( H_{u_c} (n+1,-n-1)- H_{u_c}(n-1,-n+1) ) \\
	&=1-\frac{8 n x}{\pi} +(1+4n^2) x^2-\frac{4(-3+34 n +32n^3)x^3}{9\pi}+O(x^4). \label{hpart:lemproof:forma}
\end{align}
We have that $K^{-1} (v_3,v_3(n,-n))=-K^{-1} (v_4,v_4(n,-n))$ by the cofactor matrix given in Appendix \ref{app:cofactor}.  Similarly, we can compute $K^{-1} (v_3,v_4(n,-n))$ and it is given by
\(
	K^{-1}(v_3,v_4(n,-n))=\frac{2}{\pi} -2 n x+\frac{(10+8n) x^2}{3\pi}-3 n x^3+O(x^4). \label{hpart:lemproof:formb}
\)
By the antisymmetry of $K(z,w)$ and the cofactor expansion given Appendix \ref{app:cofactor}, we have $K^{-1}(v_3,v_4(n,-n))=-K^{-1}(v_4,v_3(n,-n))$.

From Lemma~\ref{expect:prop:Na}, we have that 
\begin{equation}
\begin{split}
      \P(U(n,t))=\P(U(0,t))=1-\mu^{u_c,x} (\mbox{{\bf a} edge})=-\frac{x^2}{1-x^2} +\frac{1+x^2}{1-x^2}\left(1-\frac{2}{\pi} \mathrm{arccotan}(x) \right)
\end{split}
\end{equation}
because a particle is present if there is no dimer covering the corresponding {\bf a} edge.  By taking a series expansion of the right hand side of the above equation, we have
\begin{equation} \label{u=u_c:seriesexpprob}
       \P(U(n,t))=\P(U(0,t))= \frac{2}{\pi}x-x^2+O(x^3).
\end{equation}
From the inclusion-exclusion formula (Lemma \ref{pfaffexp}), we can expand the Pfaffian and we obtain
\(  \label{hpart:lemproof:formc}
	\P(U(0,t),U(n,t))=x^2K^{-1}(v_3,v_3(n,-n))^2-x^2 K^{-1}(v_3,v_4(n,-n))^2+ \P(U(0,t)) \P(U(n,t)).
\)
Substituting (\ref{hpart:lemproof:forma}), (\ref{hpart:lemproof:formb}) and the expressions for $\P(U(0,t))$ and $\P(U(n,t))$ into  (\ref{hpart:lemproof:formc}) gives the result.
\end{proof}

Let $U(n,t) \to U(n+1,t+1)$ denote the event that a particle enters the site $(-n-1, t+1)$ from the site $(-n,t)$.   For the dimer model, this corresponds to a local configuration of dimers covering the edges $(v_4,v_5)$ and $(v_6, v_1(-1,0))$ for the appropriate fundamental domain.  However, if the edge $(v_4,v_5)$ is covered, then the edge $(v_6, v_1(-1,0))$ must be covered by a dimer. Therefore, the event $U(n,t) \to U(n+1,t+1)$ in the particle model is equivalent to a dimer covering the edge $(v_4,v_5)$ for the corresponding fundamental domain in the dimer model.  Analogously, we can define $U(n,t) \to U(n-1,t+1)$.  We now compute the local dynamics of a pair of particles after one time step.

\begin{prop} \label{hpart:ld:rwapprox}
For two particles separated by a distance $n=o(1/x)$, we have
\begin{itemize}
\item $\df{ \P\left( U(n,t) \to U(n-1,t+1), U(0,t) \to U(1,t+1)| U(0,t),U(n,t)\right)=\frac{1}{4} +\frac{x}{\pi}+O(x^2)}$
\item $\df{ \P\left(U(n,t) \to U(n+1,t+1), U(0,t) \to U(-1,t+1)|U(0,t),U(n,t)\right)=\frac{1}{4} -\frac{x}{\pi}+O(x^2)}$
\item $\df{ \P\left(U(n,t) \to U(n-1,t+1), U(0,t) \to U(-1,t+1)|U(0,t),U(n,t)\right)=\frac{1}{4}+O(x^2)}$
\item $\df{ \P\left(U(n,t) \to U(n+1,t+1), U(0,t) \to U(1,t+1)|U(0,t),U(n,t)\right)=\frac{1}{4}+O(x^2)}$.
\end{itemize}
\end{prop}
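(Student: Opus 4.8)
The strategy is to express each of the four conditional probabilities as a ratio of unnormalized correlation functions and to evaluate numerator and denominator by the inclusion--exclusion Pfaffian formula of Lemma \ref{pfaffexp}. With $(a,b)$ running over $(1,n-1)$, $(-1,n+1)$, $(-1,n-1)$, $(1,n+1)$,
\[
\P\!\left[X(a,t+1),X(b,t+1)\,\middle|\,X(0,t),X(n,t)\right]=\frac{\P\!\left[X(0,t),X(n,t),X(a,t+1),X(b,t+1)\right]}{\P\!\left[X(0,t),X(n,t)\right]} .
\]
A particle present at $(0,t)$ (resp.\ $(n,t)$) occupies exactly one of its two diagonal neighbours at time $t+1$, so $\{X(0,t),X(n,t)\}$ is the disjoint union of the four events in the numerators; hence the four numerators sum to the denominator, which is already known from the previous proposition. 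This both provides a consistency check and reduces the number of four-point functions one must actually evaluate.

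I would then assemble the ingredients. Each four-point function is the Pfaffian of an $8\times 8$ antisymmetric matrix built from the single-edge probability $\P(X(y,t))=1-xK^{-1}(v_3,v_4)$, with the expansion of $K^{-1}(v_3,v_4)$ ($=2/\pi+O(x)$) from Section \ref{sec:exp}, and from inverse Kasteleyn entries linking pairs among the lattice points $(0,t),(n,t),(a,t+1),(b,t+1)$. Since all horizontal separations between these points are at most $n+2=o(1/x)$ and the vertical separations are $0$ or $1$, each such entry is supplied by the expansion of Lemma \ref{hp:uc1lemma} through the terms $F_0,\dots,F_3$. The key structural observation is that, although the denominator is $\Theta(x^2)$, each four-point function is also $\Theta(x^2)$ rather than the naive $\Theta(x^4)$: an inverse Kasteleyn entry joining a site at time $t$ to the same-parity site one step later is $O(1)$ (the particle almost surely persists), so the Pfaffian expansion produces the reduced order only after cancellations among these $O(1)$ cross terms, and the ratio is $\Theta(1)$.

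I would then expand the distinct numerators to order $x^3$: the leading $\Theta(x^2)$ parts give $\tfrac14$ in every ratio, while the $\Theta(x^3)$ parts give the $\pm x/\pi$ corrections for the two cases in which the particles both step towards, resp.\ both step away from, each other, and $O(x^2)$ for the two ``one in, one out'' cases. The reflection symmetry of the model about the vertical line midway between the two particles forces the third and fourth probabilities to be equal, and the fact that all four sum to $1$ fixes the last one; both serve as checks on the algebra.

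The main obstacle is the bookkeeping of the Pfaffian expansions together with the cancellation that lowers each four-point function from $\Theta(x^4)$ to $\Theta(x^2)$: the $x/\pi$ corrections sit precisely at the first order beyond that cancellation, so they are only obtained by carrying the subleading terms ($F_1$, and in places $F_2$) of Lemma \ref{hp:uc1lemma} and the linear-in-$x$ term of $K^{-1}(v_3,v_4)$ in tandem, and by matching the fundamental-domain offsets $(m,n)$ and their accompanying $(-1)^{m-n}$ signs to the geometric positions $(0,t),(n,t),(a,t+1),(b,t+1)$ without error.
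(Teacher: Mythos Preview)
Your approach is correct in principle and will produce the stated answers, but the paper takes a shorter route that sidesteps precisely the ``main obstacle'' you flag.

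You propose to compute each numerator $\P[X(0,t),X(n,t),X(a,t+1),X(b,t+1)]$ as a four--edge event, i.e.\ as an $8\times 8$ Pfaffian via Lemma~\ref{pfaffexp}, and then to track the cancellations that bring it from the naive $\Theta(x^4)$ down to $\Theta(x^2)$ so that the $\pm x/\pi$ correction survives in the ratio. The paper instead observes that on the Fisher lattice the compound event ``particle at $(0,t)$ and it steps right (resp.\ left)'' is \emph{identical} to the single dimer event ``$v_4v_6$ (resp.\ $v_4v_5$) is covered'' on the upper triangle: covering $v_4v_6$ forces the {\bf a} edge $v_3v_4$ to be empty (so the particle is present) and forces the {\bf b} edge out of $v_5$ to be occupied (so the step direction is fixed). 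Hence each numerator is literally a two--edge probability, a $4\times 4$ Pfaffian such as
\[
\Pf\begin{pmatrix}
0 & K^{-1}(v_4,v_6) & K^{-1}(v_4,v_4(n,-n)) & K^{-1}(v_4,v_5(n,-n))\\
  & 0 & K^{-1}(v_6,v_4(n,-n)) & K^{-1}(v_6,v_5(n,-n))\\
  &   & 0 & K^{-1}(v_4,v_5)\\
  &   &   & 0
\end{pmatrix}=\frac{x^2}{4}-\frac{2nx^3}{\pi}+O(x^4),
\]
which one divides by the two--point function from the previous proposition. No inclusion--exclusion, no $8\times8$ bookkeeping, and the $\Theta(x^2)$ order is manifest rather than emerging from cancellations. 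Your symmetry and summation checks remain valid and are useful, but the paper's identification of the step with a single decoration edge is the simplification worth noting.
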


\begin{proof}
We give the proof of the lemma for 
\( \nonumber
\P\left(U(n,t) \to U(n+1,t+1), U(0,t) \to U(-1,t+1)|U(0,t),U(n,t)\right),
\)
i.e. the probability that two particles are further apart after one time step.  The other quantities can be found analogously.

The event $\{U(n,t) \to U(n+1,t+1), U(0,t) \to U(-1,t+1)\}$ and the event that there are particles at $(0,t)$ and $(-n,t)$ is the event that a dimer covers the edge $(v_4 (0,0),v_6 (0,0))$ and a dimer covers the edge $(v_4 (n,-n),v_5 (n,-n))$, i.e. the particle on the left moves to the left and the particle on the right moves to the right.  By the local statistics formula~\eqref{sec2:localstats}, the probability of this event is given by
\begin{align}
\Pf \left( \begin{array}{llll}
0 & K^{-1}(v_4,v_6) & K^{-1}(v_4,v_4(n,-n)) &K^{-1}(v_4,v_5(n,-n)) \\
\dots & 0 &  K^{-1}(v_6,v_4(n,-n)) &K^{-1}(v_6,v_5(n,-n))  \\
\dots & \dots & 0 &  K^{-1}(v_4,v_5)  \\
\dots & \dots & \dots & 0
\end{array} \right).  \label{hp:eq:tog}
\end{align}
We can compute $K^{-1}_{n,-n}(v_*,v_*(n,-n))$ using Lemma \ref{hp:uc1lemma} and the cofactor matrix given in Appendix \ref{app:cofactor}.  These are given by
\( \nonumber
	K^{-1}(v_4,v_4(n,-n))=1 - \frac{8 n x}{\pi} + (1 + 4 n^2) x^2+O(x^3),
\)
\( \nonumber
	K^{-1}(v_4,v_5(n,-n))=1 +\frac{1-8n}{\pi} x + (3/4 - n + 4 n^2) x^2+O(x^3),
\)
\( \nonumber
	 K^{-1}(v_6,v_4(n,-n))=1 +\frac{1-8n}{\pi}x +(3/4 - n + 4 n^2) x^2+O(x^3),
\)
and
\( \nonumber
	K^{-1}(v_6,v_5(n,-n))=1 + \frac{(2 - 8 n) x}{\pi} + (3/4 - 2 n + 4 n^2) x^2+O(x^3).
\)
The expressions for $K^{-1}(v_4,v_5)$ and $K^{-1}(v_4, v_6)$ can be found in Lemma \ref{app:uc:exactcomputation2}.  Computing the Pfaffian given in (\ref{hp:eq:tog}) gives
\( 
	\frac{x^2}{4} -\frac{2n x^3}{\pi} + O(x^4).   \label{hp:eq:tog2}
\)
To compute $\P\left(U(n,t) \to U(n+1,t+1), U(0,t) \to U(-1,t+1)|U(0,t),U(n,t)\right)$, it suffices to divide (\ref{hp:eq:tog2}) by the joint probability of seeing two particles separated by distance $n$ which is given in Proposition \ref{hpart:dis:jtprob}.   This  gives  the second equation in the statement of the proposition.  
\end{proof}

The above propositions provide some intuition about the model before the scaling window is taken.  Using Proposition~\ref{hpart:dis:jtprob},  we can compute the covariance between two particles at distance $n$.  This is given by $x^2(1-4/\pi^2)+O(x^3)$ because $P(U(y,t))=2/\pi x+O(x^2)$ which means that there is an attraction between a pair of particles.   Proposition~\ref{hpart:ld:rwapprox} describes the one-step transitions of a pair of particles:  given two particles at time $t$, the particles at time $t+1$ are closer together with probability $1/4+x/\pi$ ,  further apart  with probability $1/4-x/\pi$ or  to the left or to the right of their locations at time $t$ with probability $1/2$.

\subsection{Location of Particles in the Scaling Window} \label{sec:uc:locs}

This subsection concentrates on finding the measure for the locations of particles along a horizontal line in the scaling window $(x,x^2)$ for $u=u_c$.  In particular, we prove Theorem \ref{intro:mainthm} in the special case when $u=u_c$ or $\g=-1$. 

Under the scaling window $(x,x^2)$, the computations from Section~\ref{sec:uc:localdynamics} do not hold.  In order to prove Theorem~\ref{intro:mainthm}, we first find entries of $K^{-1}(v_i,v_j(n,n))$ for $i,j\in\{3,4\}$ under the scaling window $(x,x^2)$ for $1/n$ of order $x$.

\begin{lemma} \label{uc:lem:longrangeh}
Suppose that $u=u_c$ and $n x= \alpha$ where $\alpha \in [0,\infty)$.  We have
\(
	K^{-1}(v_3, v_3(n,-n))= E_1(\a)+O(x^2)
\)	
Furthermore,
\(
	K^{-1}(v_3,v_4(n,-n))=E_2(\a)+E_3(\a) x+O(x^2)
\)	
where  $E_1(\alpha), E_2(\alpha)$ are defined in Theorem \ref{intro:mainthm} and 
\(
	E_3(\a)=\frac{2}{\a \pi} - 4B_I(2,4\a)+4 S_L(-2,4a)
\)
where $B_I(n,z)$ stands for the modified Bessel function of the first kind and $S_L(n,z)$ denotes the modified Struve Function. 

\end{lemma}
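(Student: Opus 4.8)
The plan is to obtain the claimed asymptotics by evaluating the Fourier-coefficient integrals for the inverse Kasteleyn matrix along the diagonal direction $(n,-n)$ in the scaling regime $nx = \alpha$ fixed, and extracting the leading-order contributions in $x$. I would start from the integral representation
\[
	K^{-1}(v_3, v_3(n,-n)) = \frac{1}{(2\pi i)^2} \int_{|z|=1}\int_{|w|=1} \frac{(\text{cofactor of }K(z,w))_{3,3}}{P(z,w)}\, z^{n-1} w^{-n-1}\, dz\, dw,
\]
and similarly for $K^{-1}(v_3,v_4(n,-n))$ with the appropriate $(3,4)$ cofactor. At $u = u_c$ the characteristic polynomial $P(z,w)$ vanishes at $(z,w)=(-1,-1)$, so the integrand has a genuine singularity there; as in Lemma \ref{hp:uc1lemma}, the combination that actually appears (numerator minus its value forced by the $(-1)^{m-n}$ parity) renders the integral convergent, and the dominant behavior as $x\to 0$ with $nx=\alpha$ comes precisely from a neighborhood of $(z,w)=(-1,-1)$.

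The key technical step is a local (Airy/Bessel-type) analysis near the double root. I would perform a residue calculation in $z$ first — exactly as in the proof of Lemma \ref{hp:uc1lemma}, picking up the root $r_1(w)$ inside $|z|=1$ — to reduce to a single contour integral in $w$ with a branch cut along the negative real axis running between $1-2x^2-2x\sqrt{1+x^2}$ and $r = -1 - 2x^2 + 2x\sqrt{1+x^2}$. The length of this cut is $\Theta(x)$, so after the substitution $w = -1 + x\,\zeta$ (which matches the scaling $n x = \alpha$, since $w^{-n} = (-1)^n e^{-\alpha \zeta + O(x)}$) the cut integral converges to a fixed integral over a finite $\zeta$-interval. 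Collecting the leading term and recognizing the resulting integrals as the integral representations
\[
	\frac{1}{\pi}\int_0^\pi e^{-4\alpha \sin\theta}\, d\theta = B_I(0,4\alpha) - S_L(0,4\alpha), \qquad \frac{1}{\pi i}\int_0^\pi e^{i\theta - 4\alpha \sin\theta}\, d\theta = S_L(-1,4\alpha) - B_I(0,4\alpha)
\]
gives $E_1(\alpha)$ and $E_2(\alpha)$ as defined in Theorem \ref{intro:mainthm}. For $K^{-1}(v_3,v_4(n,-n))$ I must go one order further: the subleading $O(x)$ term in the $\zeta$-expansion produces the coefficient $E_3(\alpha) = 2/(\alpha\pi) - 4B_I(2,4\alpha) + 4S_L(-2,4\alpha)$, which requires tracking the next term in the Taylor expansion of the numerator and of $w^{-n}$ around $w=-1$ and using the Bessel/Struve recurrences to put it in closed form. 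The contour integral around the simple pole at $w = \tfrac12(x^2-1)$, which contributed the bulk of the expansion in Lemma \ref{hp:uc1lemma}, now sits at a fixed distance $\Theta(1)$ from $w=-1$ and contributes only to the $O(x^2)$ error, since $w^{-n}=(1-x^2/2+\cdots)^{-n} = 1 + O(nx^2) = 1 + O(\alpha x)$ there and the overall prefactor $x^4/(2+x^2-2\sqrt{1+x^2}) = \Theta(1)$ — a bookkeeping check I would include.

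The main obstacle is the uniformity of the saddle-point/branch-cut expansion in $\alpha \in [0,\infty)$: near $\alpha = 0$ one recovers the discrete short-range behavior (consistent with $K^{-1}(v_3,v_3) \to 1$, $K^{-1}(v_3,v_4)\to 2/\pi$ from Proposition \ref{hpart:ld:rwapprox} and the expansions there), while for $\alpha$ large the exponential decay $e^{-4\alpha\sin\theta}$ controls everything; I need the error bounds to hold uniformly on the whole ray, which forces some care in estimating the tail of the cut integral and the contribution of the far pole for large $n$ (where $m = o(1/x)$ in Lemma \ref{hp:uc1lemma} is no longer the relevant regime — here $n = \Theta(1/x)$). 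Matching the two expansions — term-by-term agreement of the $\alpha\to 0$ limit of $E_1, E_2, E_3$ with the polynomial-in-$x$ expansions $1 - 8nx/\pi + \cdots$ and $2/\pi - 2nx + \cdots$ of Lemma \ref{hp:uc1lemma} and Proposition \ref{hpart:ld:rwapprox} — provides both a consistency check and, if done carefully, essentially the proof that the limit is attained with the stated $O(x^2)$ (resp.\ $O(x)$) rate.
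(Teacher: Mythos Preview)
Your proposal is essentially correct, but the paper takes a genuinely different computational route after the first $z$-residue. You propose the direct local branch-cut analysis: deform to the cut of $\sqrt{(1+w)^2+4wx^2}$, substitute $w=-1+x\zeta$, and take the limit. The paper instead \emph{rationalizes} the integrand by a chain of Möbius-type substitutions: write $x=\tfrac12\sqrt{-2-1/r-r}$ and set $w=(r-u^2)/(1-u^2 r)$, then (with $R^2=r$) set $u=(-t+R)/(-1+tR)$. This kills the square root entirely and produces a rational integrand in $t$ over a fixed semicircular contour, after which one simply sets $n=\alpha/x$, Taylor-expands the rational integrand in $x$, invokes bounded convergence, and substitutes $t=e^{i\theta}$ to land directly on $\frac{1}{\pi}\int_0^\pi e^{-4\alpha\sin\theta}\,d\theta$.

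Your approach is exactly the one the paper itself uses later in Lemma~\ref{inbet:lem:res} for general $\gamma$, and the paper even remarks that that lemma subsumes this one. What the rationalization buys, as the paper notes explicitly, is that it extends cleanly when the exponents of $z$ and $w$ differ (needed for the two-dimensional correlations in Lemma~\ref{hpart:lem:creatint}); your local $w=-1+x\zeta$ substitution is tied to the diagonal $z^n w^{-n}$ structure. Conversely, your route is more transparent and avoids finding the somewhat opaque substitutions. One caution: your discussion of the pole at $w=\tfrac12(x^2-1)$ is inherited from the $H(m,n)$ template in Lemma~\ref{hp:uc1lemma}, but for the specific $(3,3)$ cofactor that pole is not present after the $z$-residue (the paper reports only the square root appearing in numerator and denominator), so that paragraph is unnecessary rather than wrong.
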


\begin{proof} 
The calculation is only given for $K^{-1}(v_3, v_3(n,-n))$ as the other term is analogous and can be completed with exactly the same steps.  The main idea behind the computation is to re-arrange the underlying integral for $K^{-1}(v_3, v_3(n,-n))$ so that the only terms involving $n$ in the integrand are of the form $(1+ x\beta+O(x^2))^n$ where $\beta$ is independent of $x$ and $n$ while ensuring that the integrals are well-defined, then use the fact that $(1+x \beta+O(x^2))^n = e^{\alpha \beta} +O(x)$ for $n x = \alpha$ and finally re-arrange the expression to obtain a standard integral. 

Using the cofactor expansion given in Appendix~\ref{app:cofactor} we have
\begin{equation}
	  K^{-1}(v_3,v_3(n,-n)) = \frac{1}{(2 \pi i)^2}\int_{\mathbb{T}^2} \frac{u_c^2 x^2 w^{-1-n} z^{n-1} \left(-w^2+z^2\right)}{P(z,w)} \frac{dz}{z} \, \frac{dw}{w}
\end{equation}
A computation shows that
\begin{equation*}
      zwP(z,w)= u_c^2 x^2 (1+2w -x^2) (z-y_-(w))(z-y_+(w))
\end{equation*}
where
\( \label{hpart:uc:ypm}
y_{\pm}(w)= \frac{-1 - 3 w - w^2 - w x^2 \pm \sqrt{
 1 + 4 w + 6 w^2 + 4 w^3 + w^4 + 4 w x^2 + 8 w^2 x^2 + 4 w^3 x^2}}{1+2w -x^2}.
\)
This means that we can write
\(
	K^{-1}(v_3,v_3(n,-n)) = \frac{1}{(2 \pi i)^2}\int_{\mathbb{T}^2} \frac{w^{-1-n} z^{n-1} \left(-w^2+z^2\right)}{(z-y_-(w))(z-y_+(w))(1 + 2 w - x^2)}dz\, dw. \label{hpart:kastforuc}
\)

For equation (\ref{hpart:kastforuc}), we can take a residue with respect to $z$ at $y_-(w)$ because $|y_-(w)|<1$ for all $|w|=1$.   This results in an integrand containing the term $\sqrt{(1+w)^2+4 w x^2}$ in both the numerator and the denominator, that is, we obtain
\begin{equation} \label{hpart:kastforuc2}
      \frac{1}{2\pi i} \int_{|w|=1} -\frac{ w^{-1-n} y_-(w)^{n-1} \left(-w^2 +y_-(w)^2 \right)}{2 (w+1)(1 + 2 w - x^2) \sqrt{(1+w)^2+4 w x^2}} dw
\end{equation}
because 
\begin{equation}
 1 + 4 w + 6 w^2 + 4 w^3 + w^4 + 4 w x^2 + 8 w^2 x^2 + 4 w^3 x^2=(1 + w)^2 ((1+w)^2 + 4 w x^2)
\end{equation}
We first want to rewrite the integrand in~\eqref{hpart:kastforuc2} without a square root term involving the term of  integration.  To do so, set $x=1/2 \sqrt{-2-1/s-s}$ and take the change of variables $w=(s-y^2)/(1-y^2 s)$.  This means that $(1+w)^2 + 4 w x^2$ becomes
\begin{equation}
 \frac{(-1 + s^2)^2 y^2}{s (-1 + s y^2)^2}
\end{equation}
under the change of variables.  This transformation, $w=(s-y^2)/(1-y^2 s)$,  changes the contour of integration to a unit semi-circle  from $-1$ to $1$ passing through the point$-i$ which shall be denoted $C_1$.     Finally, we can set $-\sqrt{s}=R$ which removes all the square root terms from the integrand.  After some simplification, we obtain
\( \label{hpart:kastforuc3}
	K^{-1}(v_3,v_3(n,-n)) = \frac{1}{2\pi i }\int_{C_1} -\frac{8  R^2 \left(-1+R^2\right) (R-y) (-1+R y) }{(R+y) (1+R y) \left(-1-3 R^2+R \left(3+R^2\right) y\right)} f(y) dy
\)
where 
\(
	f(y)=\left(\frac{(R- y) (3 R + R^3 - y - 
    3 R^2 y)}{(-1 +R  y) (-1 - 3 R^2 + 3 R y + R^3 y)} \right)^n \frac{1}{ \left(-y+R \left(3+R^2-3 R y\right)\right)}.
\)
We now rewrite the integrand of~\eqref{hpart:kastforuc3} involving terms of the form $(1+\beta x+O(x^2))^n$ where $\beta$ is independent of $x$ and $n$.  To do so, we use the transformation $y=\frac{-t+R}{-1+t R}$ which moves the contour $C_1$ to a unit semi-circle  from $1$ to $-1$ passing through $i$.  Denote this new contour by $C_2$.  We also set $c_x= -(4R+4R^3)/(1+6 R^2+R^4)$. After simplification, we obtain 
 \( \label{uc:lemproof:tform}
	K^{-1}(v_3,v_3(n,-n)) = \frac{8R^2}{2 \pi i} \int_{C_2}  \frac{  \left(1+R^2-2 R t\right) (t+R (-2+R t))(c_x+t)^{n-1} }{(1+6 R^2+R^4)^2 t^{n-1}\left(c_xt+1\right)^{1+n}}dt.
\)
By undoing the previous transformations, we have that $R=-\sqrt{-1 - 2 x^2 + 2 \sqrt{x^2 + x^4}}$.  We find that $R=-i+i x+O(x^2)$ and that $c_x=-2 i x+O(x^3)$.  Setting $n=\alpha/x$ we can take a series expansion of the integrand in (\ref{uc:lemproof:tform}). This gives 
\(
	\left(\frac{2 e^{-(2 i\a)/t + 2 i\a t}}{t} \right) \left(1+x\frac{(1+t^2)(-2\a-i t+2 \a t^2)}{t^3} \right)+O(x^2).
\)
Applying the bounded convergence theorem to (\ref{uc:lemproof:tform}) along with the change of variables $t=e^{i \t}$ gives
\begin{align}
	K^{-1}(v_3,v_3(n,-n))&=\frac{1}{\pi} \int_0^{\pi} e^{-4 \a \sin \t}\left(1+ 2 i x \cos \t (-1+4\a \sin \t )\right) d\t+O(x^2) \\
	&= \frac{1}{\pi} \int_{0}^{\pi}  e^{-4 \a \sin \t} d\t +O(x^2).
\end{align}
The above integral can be written exactly in terms of Bessel and Struve functions. 

\end{proof}

In the proof of Lemma \ref{uc:lem:longrangeh}, we find an argument for the `correct' horizontal scaling.  This can be shown by the series expansion of integrand in (\ref{uc:lemproof:tform}).  Taking a larger scaling, i.e. for $\e>0$ set $n=\a/ x^{1+\e}$, leads to (\ref{uc:lemproof:tform}) converging to zero. Taking a smaller scaling, i.e. set $n=\a/x^{1-\e}$, (\ref{uc:lemproof:tform}) converges to 1 as $x \to 0$.  In essence, $n=\a/x$ provides the right horizontal scaling to ensure appropriate decay of the inverse Kasteleyn entry.   

Lemma~\ref{uc:lem:longrangeh} can also be generalized to $K^{-1}(v_i,v_j(m,n))$ for $i,j\in\{1,\dots, 6\}$ and $(m-n) x=\alpha_1$ and $(m+n) x^2 = \alpha_2$ for $\alpha_1,\alpha_2>0$. In other words, it is possible to generalize the above proof to compute  all of the entries of $K^{-1}$ in the scaling window $(x,x^2)$.

For $1\leq i \leq m$, let $x_i \in x \Z \subset \R$ denote possible particle locations on the lattice $x \mathbb{Z}$.   Let $\rho^m_x (x_1,\dots ,x_m)$ denote the $m$ point correlation function of seeing particles at $(x_1,\dots , x_m)$ with the corresponding measure on $\R$ denoted by $\P_x^{u_c}$, then

\begin{prop} \label{uc:prop:pfafflimit}
The $m$-point correlation function for the re-scaled particle locations, \\$\rho_m (y_1,\dots , y_m)$ is given by
\( \label{uc:eq:ncorx}
	\rho^x_m (x_1, \dots, x_m)= \Pf \left( M^{res}_{u_c,x} (x_i,x_j) \right)_{i,j=1}^m +O(x^{2m+2})
\)
where $M^{res}_{u_c,x}$ is antisymmetric matrix with 
\begin{align}
M^{res}_{u_c,x} (x_i,x_i)= \left( \begin{array}{cc}
	0 & \frac{2}{\pi}x-x^2 \\
	-\frac{2}{\pi}x +x^2 & 0 \end{array} \right)
\end{align}
and
\begin{align}
M^{res}_{u_c,x}(x_i, x_j) =\left( \begin{array}{cc}
	-xE_1(|x_j-x_i|) & -xE_2(|x_j-x_i|)-xE_3(|x_j-x_i|) \\
	xE_2(|x_j-x_i|)+xE_3(|x_j-x_i|) & xE_1(|x_j-x_i|) \end{array} \right).
\end{align}
where $E_1(\a),E_2(\a)$ and $E_3(\a)$ are defined in Lemma \ref{uc:lem:longrangeh}
\end{prop}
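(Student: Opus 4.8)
The plan is to start from the inclusion--exclusion identity of Lemma \ref{pfaffexp} and then feed in the entrywise expansions of the inverse Kasteleyn matrix supplied by Lemma \ref{uc:lem:longrangeh}. Since a particle at a lattice site is exactly the event that the neighbouring {\bf a} edge is uncovered, for the $m$ rescaled locations $x_1,\dots,x_m\in x\Z$ Lemma \ref{pfaffexp} gives
\[
	\rho^x_m(x_1,\dots,x_m)=\P^{u_c}_x(X_1,\dots,X_m)=\Pf\bigl(I^{sk}_m-K_m(x_1,\dots,x_m)\bigr),
\]
where $I^{sk}_m-K_m$ is the $2m\times2m$ antisymmetric matrix assembled from $2\times2$ blocks: the block attached to $x_i$ has off-diagonal entry $1-v=\P(\text{particle})$, and the block attached to the ordered pair $(x_i,x_j)$ is $-x$ times the $2\times2$ array of entries $K^{-1}(v_a,v_b(n,-n))$, $a,b\in\{3,4\}$, with $(n,-n)$ the fundamental--domain shift between the two sites. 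The goal is to show that this matrix agrees, block by block, with $M^{res}_{u_c,x}$ up to a remainder that vanishes at the claimed order once the Pfaffian is expanded.

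The first step is to record the entry expansions. For a diagonal block, $\P(\text{particle})=1-xK^{-1}(v_3,v_4)=\E_\mu[N_{a^c}]$, and carrying the computation behind Lemma \ref{expect:lem:expand} one order further at $u=u_c$ gives $\P(\text{particle})=\tfrac{2}{\pi} x-x^2+O(x^3)$, which is the $(1,2)$ entry of $M^{res}_{u_c,x}(x_i,x_i)$. For an off-diagonal block the two sites sit at macroscopic separation, so I would write the lattice distance as $n$ with $nx=\alpha=|x_i-x_j|$ held fixed and apply Lemma \ref{uc:lem:longrangeh}: $K^{-1}(v_3,v_3(n,-n))=E_1(\alpha)+O(x^2)$ and $K^{-1}(v_3,v_4(n,-n))=E_2(\alpha)+E_3(\alpha)x+O(x^2)$. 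The remaining two entries of the block follow from the antisymmetry of $K$ together with the translation and reflection symmetries of the Fisher lattice, which give $K^{-1}(v_4,v_4(n,-n))=-K^{-1}(v_3,v_3(n,-n))$ and, to the order tracked, $K^{-1}(v_4,v_3(n,-n))=-K^{-1}(v_3,v_4(n,-n))$. Multiplying by $-x$ and comparing with $M^{res}_{u_c,x}(x_i,x_j)$ then identifies the block up to an entrywise remainder.

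Writing $I^{sk}_m-K_m=M^{res}_{u_c,x}+E$, the proof concludes by expanding $\Pf(M^{res}_{u_c,x}+E)$ over perfect matchings. Each term is a product of $m$ block entries; $\Pf(M^{res}_{u_c,x})$ collects the leading contributions and every term involving at least one entry of $E$ is of strictly higher order, so a bookkeeping of these corrections, using also that off-diagonal blocks can only be entered by a matching in a pair of entries, yields the remainder $O(x^{2m+2})$ stated in the proposition.

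The step I expect to be the real obstacle is the block-by-block identification, specifically the handling of the two sublattices on which same-time particle positions live. A macroscopic displacement $\alpha$ does not single out one domain shift $n$; one is forced to interpolate between $n$ and $n\pm1$, and it is the first-order part of this interpolation, together with the genuine $O(x)$ correction already present in Lemma \ref{uc:lem:longrangeh}, that reassembles into the $E_3(\alpha)$ term of $M^{res}_{u_c,x}$. Getting the signs in this interpolation right, and checking that what remains is of the order the Pfaffian bound needs, is the delicate point; once the block structure is pinned down, propagating the error through the Pfaffian is routine.
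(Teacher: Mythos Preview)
Your approach is correct and is exactly the one the paper takes: the paper's own proof is a single sentence, ``This is a direct consequence of Lemma \ref{uc:lem:longrangeh} and the underlying Pfaffian structure of computing the probability of the location of $m$ particles on this particular dimer model.'' Your write-up simply unpacks that sentence, invoking Lemma \ref{pfaffexp} for the Pfaffian inclusion--exclusion and Lemma \ref{uc:lem:longrangeh} for the block entries, which is precisely what is meant.

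One comment: the ``real obstacle'' you flag in the last paragraph is not actually there. On a fixed horizontal line the particle sites all lie on a \emph{single} sublattice (the fundamental-domain shift between two same-time sites is always of the form $(n,-n)$), so there is no interpolation between $n$ and $n\pm 1$ to perform. The $E_3$ term in the off-diagonal block is exactly the order-$x$ correction already recorded in Lemma \ref{uc:lem:longrangeh} for $K^{-1}(v_3,v_4(n,-n))$, multiplied by the prefactor $x$ coming from the edge weight; nothing needs to be reassembled. With that worry removed, the block identification is immediate and the Pfaffian error bookkeeping is, as you say, routine.
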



\begin{proof}[Proof of Proposition \ref{uc:prop:pfafflimit}]
This is a direct consequence of Lemma \ref{uc:lem:longrangeh} and the underlying Pfaffian structure of computing the probability of the location of $m$ particles for this particular dimer model given in Lemma~\ref{pfaffexp}.
\end{proof}

The above calculations provide a proof for Theorem \ref{intro:mainthm} for the case $u=u_c$.  The proof follows an argument from \cite{BOU:07}.


\begin{proof}[Proof of Theorem \ref{intro:mainthm} for $u=u_c$]  

As $\P_x^{u_c}$ and $\P_0^{u_c}$ are both defined in the same space  of measures, it is enough to prove that $\P_x^{u_c}$ converges weakly to $\P_0^{u_c}$.  Due to considering point processes,  tightness of the measures is guaranteed (see \cite{DVJ:89}).  It remains to show convergence of finite dimensional distributions. Let $N(\cdot)$ denote number of particles in a set, $\{ A_1,\dots , A_k\}$ denote a family of disjoint Borel sets in $\R$ and $n_1, \dots,n_k \in \mathbb{N}_0$,  we need to show
\( \label{hpart:thmproof:fidi}
	\lim_{x\to 0}\P_x^{u_c}(N(A_1)=n_1,\dots , N(A_k)=n_k)= \P_0^{u_c}(N(A_1)=n_1,\dots , N(A_k)=n_k).
\)

In order to prove such an equality, we need to write the measures $\P_x^{u_c}$ and $\P_0^{u_c}$ in terms of their $m$-point correlation functions, which can be achieved by writing their respective generating functions. Denote $n!=\prod_{j=1}^k n_j!$, $|n|=\sum_{j=1}^k n_j$,  $z=(z_1,\dots, z_k)$ and  $z^n=z_1^{n_1} \dots z_{k}^{n_k}$.   The Taylor series of the generating function of $\P_x^{u_c}$ about the point $z=0$ is
\begin{align} \nonumber
	Q_x(z)	&=	\E_x\left[ \prod_{j=1}^k (1-z_j)^{N(A_j)!} \right] \\
			&=	\sum_{p \in \mathbb{N}_0^k} \frac{(-z)^p}{p!} \E_x \left[ \prod_{j=1}^k\frac{N(A_j)!}{(N(A_j)-p_j)!} \right]. \nonumber
\end{align}
$Q_x(z)$ can also be expressed as a power series with coefficients $z^n$ given by  $\P_x^{u_c}(N(A_1)=n_1, \dots , N(A_k)=n_n)$. The two representations of the generating function give
\( \nonumber
	\P_x^{u_c}(N(A_1)=n_1, \dots , N(A_k)=n_n)=\left. \frac{(-1)^n}{n!} \frac{\partial^n}{\partial z^n} Q_x(z)\right|_{z=(1,\dots,1)}.
\)
Similarly, the generating function of $\P_x^{u_c}$ is given by
\( \nonumber
	Q_0(z) =	\sum_{p \in \mathbb{N}_0^k} \frac{(-z)^p}{p!} \E_0 \left[ \prod_{j=1}^k\frac{N(A_j)!}{(N(A_j)-p_j)!} \right].
\)
Therefore, (\ref{hpart:thmproof:fidi}) is equivalent to showing
\( \nonumber
	\lim_{x \rightarrow 0} \left. \frac{\partial^n}{\partial z^n} \frac{(-1)^n}{n!} Q_x(z)\right|_{z=(1,\dots,1)} =\left. \frac{\partial^n}{\partial z^n}\frac{(-1)^n}{n!} Q_0(z)\right|_{z=(1,\dots,1)}.
\)

Without loss of generality, suppose that the family of Borel sets, $\{A_1, \dots , A_k\}$ is open.   Let $\tilde{A}=\{y \in \mathbb{Z}: y x \in A\}$ for any set open set $A$. Using Proposition \ref{uc:prop:pfafflimit} gives
\( \nonumber
	 \E_x \left[ \prod_{j=1}^k\frac{N(A_j)!}{(N(A_j)-n_j)!} \right] = \sum_{x_1 \in x \tilde{A}_1, \dots , x_{x_{|n|}} \in x \tilde{A}_k } \Pf \left(M^{res}_{u_c,x}(x_i, x_j) \right)_{i,j=1}^{{|n|}}
\)
where the sum is over distinct edges. Taking the limit in $x \to 0$, the Riemann sums converge to 
\( \nonumber
	\int_{A_1^{n_1}} \dots  \int_{A_k^{n_k}} \Pf \left(M^{res}_{-1}(y_i, y_j) \right)_{i,j=1}^{{|n|}} dy_1^{n_1} \dots dy_{k}^{n_k}
\)
where $M^{res}_{-1}(y_i, y_j)$ is defined in Theorem \ref{intro:mainthm}. This is exactly the $m$ point correlation function for $\P_0^{u_c}$.  By noting that all entries of  $\left(M^{res}_{u_c,x}(x_i, x_j) \right)_{i,j=1}^{{|n|}}$ are uniformly bounded by 1 (using Proposition \ref{uc:prop:pfafflimit}), we can use Hardamard's inequality,  
\( \nonumber
	\left| \E_x \left[ \prod_{j=1}^k\frac{N(A_j)!}{(N(A_j)-n_j)!} \right] \right| \leq \prod_{j=1}^k |A_j| (2|n|)^{|n|/2}.
\)
This implies that $Q_x(z)$ is an absolutely convergent series.  For $z$ in a compact set,  by the Lebesgue Dominated Convergence the derivatives of $Q_x(z)$ converge uniformly to $Q_0(z)$ in the limit $x \to 0$.  This completes the proof of convergence of finite dimensional distributions. 
\end{proof}


\section{$u<1/2$ (not dependent on $x$):} \label{sec:ulessthanuc}
In this section, we prove Theorem \ref{uless:thm:PPP}.
\begin{figure}[h!]
\begin{center}
\epsfsize220pt
\scalebox{1}{\epsfbox{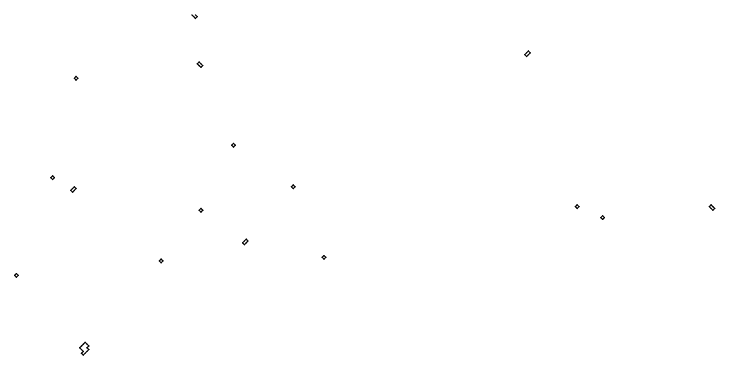}}
\caption{A simulation of the particle model on a grid of 100 by 100, with $u=0.4$ and $x=0.1$ using Glauber dynamics}
\label{ulessthanuc:fig:sim}
\end{center}
\end{figure}
Define $R(m,n)$ to be the event of observing a dimer covering the edge $(v_6(m,n) , v_1(m-1,n))$.  In order to compute any probabilities, we need the following lemma.

\begin{lemma}\label{uless:lem:H}
For $u<1/2$ (fixed), we have $\lim_{x \to 0}x^2 H_u(m,n)$ exists and is given by
\begin{equation} \label{uless:H0}
    H^0_u(m,n)=\lim_{x \to 0} x^2 H_u(m,n)=\frac{1}{(2\pi i)^2} \int_{|w|=1} \int_{|z|=1} \frac{ z^m w^n}{(u w+uz+w z)(1+u z+uw)} dz \, dw
\end{equation}

Let $r_1$ and $r_2$ be the roots of the polynomial $u + w+ uw^2$ with $|r_1|<1$ and $|r_2|>1$ and set $s_1=u/(u+r_1)$ and $s_2=u/(u+r_2)$. Then, for $n,m \geq 1$,
\(
	H_u^0(m,n)= \frac{ (-u)^{m+n}(m+n)!}{u(r_1-r_2)} \left(  
\frac{{_2 F_1 \left(1,-m,1+n,s_1^{-1}\right)}}{m! n!} 
 -s_2 \sum_{k=0}^{n-1}  
\frac{\left( -s_2 \right)^{n-1-k}  }{k! (m+n-k)!}  \right) \label{uless:lem:Hfirstterm}
\)
where ${_2F_1}$ is the confluent hypergeometric function of the second kind.

For $m,n \geq 1$, we also have
\( \nonumber
	H_u^0(-m,n)=(-1)^{n+m} \left( \frac{1-\sqrt{1-4u^2}}{2u} \right)^{n+m}.
\)
\end{lemma}

The proof of Lemma \ref{uless:lem:H}  can be found in Appendix \ref{app:c:uless}. From the above lemma, as $u<1/2$, we have that both $H_u^0(m,n)$ and $H_u^0(-m,n)$ decay exponentially.  In fact, we can compute the asymptotic expansion of $H_u^0(m,n)$: we can write $s_1=1-1/(2u^2)-\sqrt{1-4u^2}/(2u^2)$ and $s_2=1/s_1$ with $|s_1|>1$. As $1/s_1 = -u^2+O(u^4)$, we have
\(
	{_2 F_1 \left(1,-m,1+n,s_1^{-1}\right)}=u^2+O(u^3).
\)
The second term in (\ref{uless:lem:Hfirstterm}) is $u^2/(m+n)!+O(u^4)$. Setting $m=n+k$, we have 
\(
	H_u^0(n+k,n)= C \frac{(-u)^{2n+k}(2n+k)!}{(n+k)! n!} +O(u^{2n+k+1}). 
\)
where $C$ is a constant.  

From the series expansion of $H_u$, we have $H_u(m,n)=1/x^2 H_u^0(m,n) +O(1)$. The next lemma compute the joint probability of observing two rightward leaving {\bf b} edges.  

\begin{prop} \label{uless:prop:jointprob}
For $u<1/2$ (fixed) and for $m,n \in \Z$ and  $|m|\geq |n|$ with $k=||m|-|n||$.
\(
	\P(R(0,0) , R(m,n))= u^4x^4\left(1-\frac{1}{\sqrt{1-4u^2}} \right)^2+ C x^2(-u)^{4n+2k}+O(u^{4n+2k+1})
\)
where $C$ is a constant.
\end{prop}

\begin{proof}
We only compute the result for $m\geq n>1$ as the other cases follow from very similar calculations.  Using the local statistics formula  (\ref{sec2:localstats}), $\P(X(0,0) , X(m,n)) $ is given by
\begin{align} \label{uless:jtprobs}
u^2 x^2 \Pf  \left( \begin{array}{llll}
		0& K^{-1}(v_6,v_1{(-1,0)}) & K^{-1}(v_6,v_6{(m,n)}) & K^{-1}(v_6,v_1{(m-1,n)})\\
		\dots  & 0 & K^{-1}(v_1,v_6{(m+1,n)}) &K^{-1}(v_1,v_1{(m,n)}) \\
		\dots  &  \dots  & 0& K^{-1}(v_6,v_1{(-1,0)}) \\
		\dots & \dots  & \dots & 0 \end{array} \right) 
\end{align}
Two of the interaction terms can be computed  using Lemma \ref{uless:lem:H} and the cofactor matrix given in Appendix \ref{app:cofactor}.  We have
\begin{equation}
\label{uless:eq:int}	
  \begin{split}
K^{-1}(v_6,v_6{(m,n)}) &=-K^{-1}(v_1,v_1{(m,n)}) \\
			&=ux^2 H_u(m-1,n)-ux^2 H_u(m+1,n) \\
			&= uH_u^0(m-1,n)-u H_u^0(m+1,n)+O(x^2).
\end{split}
\end{equation}
It still remains to find $K^{-1}(v_1,v_6{(m+1,n)})$ and $ K^{-1}(v_6,v_1{(m-1,n)})$.  We have
\begin{align}
	K^{-1}(v_6,v_1(m-1,n))&=\frac{1}{x(2\pi i)^2} \int_{|z|=1} \int_{|w|=1} \frac{ z^{m-1} w^{n}}{u w+uz+w z} dw \, dz +O(1)\\
	&=\frac{(m+n-1)!}{x(m-1)!( n)!} (-u)^{m+n-1}+O(1).
\end{align}
The remaining interaction entry $ K^{-1}(v_1,v_6{(m+1,n)})$ is given by
\begin{equation}\nonumber
\begin{split}
  K^{-1}(v_1,v_6{(m+1,n)})&=\frac{1}{(2\pi i)^2 x} \int_{\mathbb{T}^2} \frac{z^{m+1} w^{n}}{(1+uw+uz) w z} dw \,dz\\
   &+\frac{x}{(2\pi i)^2} \int_{\mathbb{T}^2} \frac{(u^2 + u^3 w^3 + u z + 2 u^2 w z + u^3 w^2 z)z^{m+1} w^n}{ (1 + u w + u z)^2 (u w + u z + w z)} \frac{dw}{w} \frac{dz}{z} +O(x^3).
\end{split}
\end{equation}
For the right hand side of the above equation, we have the first term is zero for $u<1/2$ while the second term is a linear combination of
\( \label{uless:eqn:intIterm}
	I_u(m,n)= \frac{x}{(2\pi i)^2}  \int_{|z|=1} \int_{|w|=1} \frac{ z^n w^m}{(u w+uz+w z)(1+uz+u w)^2} dw \,dz.
\)
$I_u(m,n)$ can be computed by differentiating $xH_u^0(m,n)$ by $r_1$ and $r_2$.  This can be seen by comparing the single integral formulas of $I_u(m,n)$ and $x H_u^0(m,n)$ after taking the residue at $z=-uw/(u+w)$ in both double integral formulas where the double integral formula of $xH_u^0$ can be found using~\eqref{uless:H0}.  We can therefore find, by using Lemma~\ref{uless:lem:H}, that $I_u(m,n)$ is of order $xu^{2n+k+1}$.  In the expansion of the Pfaffian given in~\eqref{uless:jtprobs}, we have that $u^2 x^2 K^{-1}(v_1,v_6{(m+1,n)})$ is multiplied to $ K^{-1}(v_6,v_1{(m-1,n)})$ and hence this term contributes a maximum of $O(u^{4n+2k+1})$.

To compute $K^{-1}(v_6,v_1{(-1,0)})$, we can take a series expansion of the integrand in terms of $x$ and use residue calculus to find the integral following the method given in Appendix~\ref{app:uless:examples}.  We obtain
\begin{equation}
\label{uless:eq:nonint}
\begin{split}
K^{-1}(v_6,v_1{(-1,0)}) &= \frac{1}{x(2\pi i)^2} \int_{|z|=1} \int_{|w|=1} \frac{1}{z(u w+uz+w z)} dw \, dz \\
&-\frac{x}{(2\pi i)^2} \int_{|z|=1} \int_{|w|=1} \frac{u^3 w + 2 u^2 w^2 + u w^3 + u^3 z + u^2 w^3 z}{w (1 + u w + u z) (u w + u z + w z)^2} dw\, dz+O(x^3)\\
&=u x\left(1-\frac{1}{\sqrt{1-4u^2}} \right) +O(x^2).
\end{split}
\end{equation}
We can use all the expansions computed  above and substitute them back into the matrix given~\eqref{uless:jtprobs} and take its Pfaffian which gives the result.

\end{proof}

Proposition \ref{uless:prop:jointprob} can be generalized to show  that the difference between the joint probability of $k$ edges and the product of the probabilities of each edge is an exponentially decaying term.  This can be achieved using a complete graph representation of the Pfaffian, that is, the Pfaffian of a matrix $A$ represents counting the weighted number of perfect matchings of a complete graph with the edge weights given by $|A_{i,j}|$ (with the signs given by the  \emph{Pfaffian orientation}), see \cite{god:93} for more details.  By the complete graph representation, the joint probability of $k$ edges is equal to the product of the probabilities of each edge plus an exponentially decaying term.    However, we are interested in generalizing Proposition \ref{uless:prop:jointprob}  to consider the joint distributions of creations as opposed to {\bf b} edges.  Let $Y(a,b)$ denote a creation at $(a,b)$ with $a,b \in \Z$ and $(a+b) \mod 2=0$.

\begin{lemma} \label{uless:creat:decay}
Suppose that for each $k$ and $l$, there exists a constant $c_{k,l}$ such that $|(i_k,j_k)-(i_l,j_l)|> c_{k,l}/\sqrt{x}$.  We have
\( \nonumber
\P (Y(i_1,j_1),\dots, Y (i_m,j_m))=\prod_{k=1}^m \P(Y (i_k,j_k)) +O(x^{2m+1})
\)
\end{lemma}

The result also holds when the distance between particles is $1/x^\e$ for $\e>0$.

\begin{proof}
The proof is very similar to the proof of Proposition \ref{uless:prop:jointprob}.  There is a creation if there are dimers covering the edges $(v_6, v_1(-1,0))$ and $(v_5,v_2(0,-1))$.  By~\eqref{sec2:localstats}, the underlying matrix for $\P (Y(i_1,j_1),\dots, Y (i_m,j_m))$ is a $4m \times 4m$ matrix with diagonal $4 \times 4$ blocks given by
\begin{equation} \nonumber 
\left(  \nonumber
\begin{array}{llll} 
  0 & K^{-1}(v_6,v_1(-1,0)) & K^{-1} (v_6,v_5) & K^{-1}(v_6,v_2(0,-1)) \\
  -K^{-1}(v_6,v_1(-1,0)) & 0 &  K^{-1} (v_1,v_5(1,0)) & K^{-1}(v_1,v_2(1,-1)) \\
   -K^{-1}(v_6,v_5) & -K^{-1} (v_1,v_5(1,0)) &0 &  K^{-1}(v_5,v_2(0,-1)) \\
   -K^{-1}(v_6,v_2(0,-1)) & -K^{-1} (v_1,v_2(1,-1)) & -K^{-1}(v_5,v_2(0,-1))&0 \\ \end{array} \right)
\end{equation}
and off-diagonal $4 \times 4$ blocks of the form
\begin{align}
\left(  \nonumber
\begin{array}{llll}
K^{-1}(v_6,v_6(m,n)) & K^{-1}(v_6,v_1(m-1,n)) & K^{-1}(v_6,v_5(m,n))  & K^{-1}(v_6,v_2(m,n-1)) \\
 K^{-1}(v_1,v_6(m+1,n)) & \dots &\dots & \dots \\
K^{-1}(v_5,v_6(m,n)) & \dots & \dots &\dots \\
K^{-1}(v_2,v_6(m,n+1))  & \dots & \dots & \dots \\
\end{array}
\right)
\end{align}
where `$\dots$' denotes the appropriate entry.  By the cofactor matrix given in Appendix \ref{app:cofactor} and a series expansion around $x=0$, each entry of the above  off-diagonal block matrix is some linear combination of $H_u^0(m,n)$ or $I_u(m,n)$ which is defined in (\ref{uless:eqn:intIterm}).  By using Lemma~\ref{uless:lem:H} and the argument for approximating $I_u(m,n)$ given in Proposition~\ref{uless:prop:jointprob},  each entry of the off-diagonal block matrix decays exponentially with distance.  We can use the complete graph representation of the Pfaffian and ignore the matchings of the complete graph which contain no exponentially decaying edge weights as they are absorbed into the error term.  Therefore, the largest contribution of $\P (Y(i_1,j_1),\dots, Y (i_m,j_m))$ comes from the subset of the set of matchings which do not contain any exponentially decaying edge weights.  This is exactly $\prod_{k=1}^m \P(Y (i_k,j_k))$.
\end{proof}

Let $A_n$ be the event of seeing a string of {\bf b} edges crossing an $n$ by $n$ square.  This corresponds to seeing a particle traverse an $n$ by $n$ square in the particle model. We have

\begin{prop}\label{uless:prop:decay}
For $n=x^{\e-1}$ and for all $0<\e<1$, we have
\( \nonumber
	\lim_{x \to 0} \P(A_n) =0.
\)
\end{prop}

The above proposition means that the probability of seeing non-trivial paths in the scaling window $(x,x)$ is zero.  In other words, the paths of the particles from each creation contract to points in the scaling window $(x,x)$.  

\begin{proof}
The event $A_n$ is contained in the event of having a {\bf b} edge at both the top and bottom of the $n$ by $n$ square at any position. Therefore
\( \nonumber
	\P(A_n) \leq n^2 \sup_{k} \P(X(0,0) \cap X(n-k,k)).
\)
By a symmetrical consideration, the maximum probability occurs when the $L^1$ distance between the two {\bf b} edges is minimized, i.e. in Proposition  \ref{uless:prop:jointprob} the exponential term is smallest when $k=n/2$.   Using Proposition \ref{uless:prop:jointprob} gives
\( \nonumber
	\P(A_n) \leq n^2 \left(u^4x^4\left(1-\frac{1}{\sqrt{1-4u^2}} \right)^2+ C x^2(-u)^{2n} \right).
\)
By taking $n=x^{\e-1}$ with $0<\e<1$,   $\P(A_n)$ tends to zero as $x$ tends to zero. 
\end{proof}

Finally, we need to check that the creation points are singletons in the scaling window. Let $B_n((a,b))$ represent a ball of radius $n \in \Z_+$ around $(a,b)$ where $a, b \in \Z$ with $(a+b) \mod 2=0$.   Let $Y_1$ and $Y_2$ be the locations of two distinct creations. 

\begin{lemma} \label{uless:lem:singletons}
For $n=x^{\e-1}$ and for all $0<\e <1$, we have
\( \nonumber
	\lim_{x \to 0} \P ( Y_2 \in B_n ((a,b))|Y_1=(a,b)) =0.
\)

\end{lemma}

This lemma implies that given the creation $Y_1$, the probability that there is another creation which converges to $Y_1$ in the scaling window $(x,x)$ is zero. In other words, given a creation, the probability that it is a doubleton is zero in the scaling window $(x,x)$.   As the expected number of creations is locally finite in the scaling window $(x,x)$, the probability that any creation is a doubleton in the scaling window is zero. 

\begin{proof}
We can write 
\( \nonumber
	 \P ( Y_2 \in B_n ((a,b))|Y_1=(a,b))= \sum_{(c,d) \in B_n(a,b) } \P(Y_2=(c,d)|Y_1=(a,b)).
\)
$ \P(Y_2=(c,d),Y_1=(a,b))$ can be computed explicitly and it is given by a Pfaffian of an 8 by 8 matrix whose 4 by 4 diagonal and off-diagonal blocks are the same as those given in Lemma~\ref{uless:creat:decay}.   In fact, we have for a constant $C>0$,
\( \nonumber
	 \P ( Y_2 =(c,d)|Y_1=(a,b)) \leq C x^2 +O(x^3)
\)
because the maximum of $ \P(Y_2=(c,d),Y_1=(a,b))$ is order $x^4$.   Therefore, we can bound the probability of $Y_2 \in B_n((a,b))$ given $Y_1=(a,b)$ which gives 
\( \nonumber
	 \P ( Y_2 \in B_n ((a,b))|Y_1=(a,b)) \leq C x^2 n^2.
\)
where the factor of $n^2$ comes from multiplying by the area of $B_n((a,b))$.   As we have taken $n=x^{\e-1}$, we get the result.

\end{proof}

Using the above lemmas and propositions, we can now prove Theorem \ref{uless:thm:PPP}.  

\begin{proof}[Proof of Theorem \ref{uless:thm:PPP}]
By Proposition \ref{uless:prop:decay}, each loop generated by a string of {\bf b} edges is finite and does not appear in the scaling window $(x,x)$.  Therefore, each loop of {\bf b} edges will be contracted to a single point under re-scaling.  As the density of creations is finite in the scaling window, by Lemma \ref{uless:lem:singletons} all creations are singletons.  As tightness is guaranteed for point processes (see \cite{DVJ:89}),  it is enough to consider the locations of creations and show that the finite dimensional distributions of the corresponding measures converge in the scaling window $(x,x)$.  We shall follow the approach developed in the proof of Theorem \ref{intro:mainthm} for the case $u=u_c$ (see Section \ref{sec:uc:locs}).

Let $\P_x$ denote the point process in $\R^2$ arising from the locations of creations in a loop.  Let $A_i$, for $i \in \{1,\dots, k\}$, be disjoint, non-empty simply connected domains and let  $\tilde{A_i}$ be the union of lattice points in  $((2 \Z \times 2 \Z) \cup ((2\Z +1)\times ((2\Z +1))$ so that $x \tilde{A}_i \to A_i$ as $x \to 0$.   Let $N(A_i)$ denote the number of creations in a set $A_i$ for $1 \leq i \leq k$.   Denote $n!=\prod_{j=1}^k n_j!$, $|n|=\sum_{j=1}^k n_j$,  $z=(z_1,\dots, z_k)$ and  $z^n=z_1^{n_1} \dots z_{k}^{n_k}$.  For $n \in \mathbb{N}^k_0$, we have
\( \nonumber
	\E_x \left[ \prod_{j=1}^k\frac{N(A_j)!}{(N(A_j)-n_j)!} \right]= \sum_{(i_1,t_1)\in \tilde{A}_1}  \dots \sum_{ (i_{|n|}, t_{|n|})  \in \tilde{A}_{k}}  \P(Y(i_1,t_1),\dots Y(i_{|n|},t_{|n|}))
\)
where the sum is over distinct lattice sites.  We can split the sum into two with the first sum having the creations separated by at least $1/x^{1/2}$  and the second sum containing the remaining terms.  In other words, for $a_k=(i_k,t_k)$ we have for
\( \nonumber
	A=\{(a_1,\dots, a_{|n|}):  |a_i-a_j|>x^{-1/2} \mbox{ for all  } 1\leq i,j \leq |n| \mbox{ with } a_1 \in \tilde{A}_1,\dots, a_{|n|} \in \tilde{A}_k \}
\)
and for $A^c=(\tilde{A}_1 \times \dots \times \tilde{A}_k )\backslash A$, we can write
\( \nonumber
	\E_x \left[ \prod_{j=1}^k\frac{N(A_j)!}{(N(A_j)-n_j)!} \right]=\left(\sum_A + \sum_{A^c} \right) \P(Y(a_1),\dots, Y(a_{|n|}))
\)
The summand is of order $x^{-2|n|}$.  For the second sum of the above equation, we are summing over less than $x^{-2|n|+1}$ lattice points which means that in the limit as $x$ tends to zero, the second sum goes to zero.   For the first sum, as each edge is separated by a distance of at least $1/\sqrt{x}$, using Lemma \ref{uless:creat:decay} and taking the limit as $x \to 0$  gives 
\( \nonumber
	 \int_{(A_1)^{n_1}} \dots \int_{(A_k)^{n_k} } \left( \frac{1}{2}-u^2 -\frac{1}{2} \sqrt{1-4u^2} \right)^{|n|} dx_1^{n_1} \dots dx_k^{n_k}.
\)
Let
\( \nonumber
	Q_x(z)	=	\sum_{p \in \mathbb{N}_0^k} \frac{(-z)^p}{p!} \E_x \left[ \prod_{j=1}^k\frac{N(A_j)!}{(N(A_j)-p_j)!} \right]
\)
and set
\( \nonumber
	Q_0(z)	=	\sum_{p \in \mathbb{N}_0^k} \frac{(-z)^p}{p!} \int_{(A_1)^{n_1}} \dots \int_{(A_k)^{n_k} } \left( \frac{1}{2}-u^2 -\frac{1}{2} \sqrt{1-4u^2} \right)^{|n|} dx_1^{n_1} \dots dx_k^{n_k}
\) 
i.e. a generating function for a Poisson point process.   Notice that each generating function can also be written as a power series whose coefficients are given by the finite dimensional distribution probabilities for the corresponding measure on the sets $A_1, \dots, A_m$.    Following the same convergence argument given in the proof of Theorem \ref{intro:mainthm} for $u=u_c$, it is clear that the derivatives of $Q_x(z)$ (with respect to $z$) converge to $Q_0(z)$.  This proves the convergence of finite dimensional distributions.
\end{proof}




\section{The particle model for $u=u_i$} \label{sec:uindpt}

In this section, we concentrate on the particle model for $u=u_i$ and prove Theorem \ref{uindpt:thm:main} by showing that the particles in the particle model have the same distribution as the boundaries of colors for the two colored noisy voter model, i.e. at $u=u_i$ the particle model is a Markov process.  Figure \ref{uc:fig:sim} shows a simulation of the particle model for $x=0.1$ and $u=u_i$.

  \begin{figure}
\begin{center}
\epsfsize220pt
$$\scalebox{1}{\epsfbox{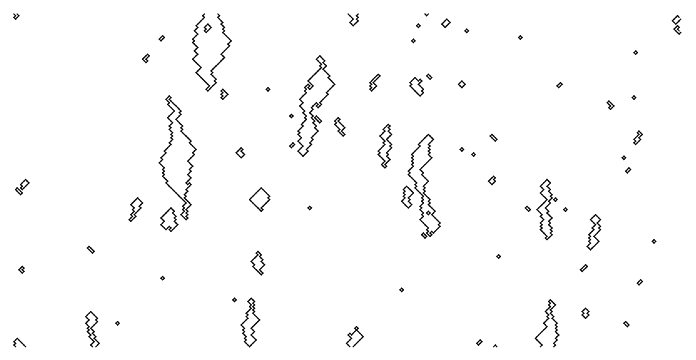}}$$
\caption[A simulation of the Particle Model: $u=u_i$ and $x=0.1$]{A simulation of the particle model on a 100 by 100 grid with $u=u_i$ and $x=0.1$ made using Glauber dynamics.}
\label{ui:fig:sim}
\end{center}
\end{figure}

We can compute $H_{u_i}(m,n)$, defined in~\eqref{definitionofH}, and it is given by 
\begin{lemma}\label{cor:lemmacritical2}
For $m, n \geq 0$ and $0<x<1$, we have
\(
	H_{u_i}(m,-n)= \frac{(-1)^{m+n} (1+\sqrt{1-x^2})^2}{4x^3} \left( \frac{\sqrt{1-x^2}}{1+x} \right)^{m+n}.	\nonumber
\)
\end{lemma}
This is an exact result and the proof can be found in Appendix \ref{app:ui:local}.  As a consequence, we can compute the entries of $K^{-1} (v_i, v_j(n,-n))$ exactly.  

\begin{lemma}
	For $u=u_i$ and for $0<x<1$, 
\( \nonumber
	K^{-1} (v_3, v_3(n,-n))=-K^{-1} (v_3, v_4(n,-n))
\)	
\end{lemma}
\begin{proof}
	We can compute $K^{-1}(v_3,v_3(n,-n))$ using Lemma \ref{cor:lemmacritical2} and the cofactor matrix found in Appendix \ref{app:cofactor} which gives
\begin{align}
		K^{-1} (v_3, v_3(n,-n))&= -u^2 x^2 H_{u_i} (n-1,1-n)+ u^2 x^2 H_{u_i}(n+1,-n-1)  \nonumber \\
		&= -\frac{1}{1-x^2} \left( \frac{\sqrt{1-x^2}}{1+x} \right)^{2n}. \nonumber
\end{align}
Similarly, we can compute $K^{-1}(v_3,v_3(n,-n))$ using Lemma \ref{cor:lemmacritical2} and the cofactor matrix found in Appendix \ref{app:cofactor} which gives
\begin{align}
	K^{-1} (v_3, v_4(n,-n))&=\frac{1}{1-x^2} \left( \frac{\sqrt{1-x^2}}{1+x} \right)^{2n}. \nonumber
\end{align}

\end{proof}

As in Section~\ref{sec:hpart}, define $U(i,t)$ to be the event that there is a particle at $-i$ at time $t$. 
\begin{prop} \label{uc2}
For $n\geq1$, we have
\( \nonumber
	\P (U(0,t),U(n,t))=\frac{x^2}{(1+x)^2}.
\)
\end{prop}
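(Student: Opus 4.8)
The plan is to apply the inclusion--exclusion Pfaffian identity of Lemma \ref{pfaffexp} with $n=2$ to the two {\bf a} edges located at $(0,t)$ and $(n,t)$. Writing $v=\P(X^c)=xK^{-1}(v_3,v_4)$ for the probability that an {\bf a} edge is covered (the same for both edges, by translation invariance), Lemma \ref{pfaffexp} gives $\P(X(0,t),X(n,t))=\Pf(I^{sk}_2-K_2)$, the Pfaffian of a $4\times4$ antisymmetric matrix whose two diagonal $2\times2$ blocks each have off-diagonal entry $1-v$, and whose off-diagonal $2\times2$ block $B$ has as its four entries (up to a common factor $x$ and an overall sign) the inverse Kasteleyn entries $K^{-1}(v_3,v_3(n,-n))$, $K^{-1}(v_3,v_4(n,-n))$, $K^{-1}(v_4,v_3(n,-n))$, $K^{-1}(v_4,v_4(n,-n))$ linking the two edges. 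Expanding the $4\times4$ Pfaffian,
\[
\P(X(0,t),X(n,t))=(1-v)^2-\det B .
\]
So the proposition reduces to two claims: that $1-v=x/(1+x)$, and that $\det B=0$ for all $n\ge 1$.

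For the first claim I would expand $K^{-1}(z,w)_{v_3,v_4}$ as the relevant cofactor of the explicit Kasteleyn matrix (\ref{kastelyn}) divided by the characteristic polynomial $P(z,w)$, which turns $K^{-1}(v_3,v_4)$ into a finite linear combination of the Fourier coefficients $H(m,n)$ with small $|m|,|n|$; substituting the exact closed form of Lemma \ref{cor:lemmacritical2} and simplifying with $u=u_i=(1-\sqrt{1-x^2})/x^2$ should give $K^{-1}(v_3,v_4)=1/(x(1+x))$, hence $1-v=\P(X)=x/(1+x)$.

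For the second claim, the identity recorded just before the proposition, $K^{-1}(v_3,v_3(n,-n))=K^{-1}(v_3,v_4(n,-n))$ at $u=u_i$ (itself deduced from Lemma \ref{cor:lemmacritical2}), says that the first row of the $K^{-1}$-block of $B$ has equal entries. The same computation with $v_4$ in place of $v_3$ in the first slot --- equivalently, the lattice reflection across the {\bf a} edge, which interchanges $v_3$ and $v_4$ --- gives $K^{-1}(v_4,v_3(n,-n))=K^{-1}(v_4,v_4(n,-n))$, so the second row of $B$ has equal entries as well. Hence the two columns of $B$ coincide, $\det B=0$, and combined with the first claim, $\P(X(0,t),X(n,t))=(1-v)^2=x^2/(1+x)^2$.

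The main obstacle is the bookkeeping behind $\det B=0$: one has to pin down precisely which vertices, which translation vector, and which signs --- arising both from the Kasteleyn orientation of Figure \ref{dimer:lattice} and from the periodicity weights $w_{ij}$ --- enter the four entries of $B$, and then verify that the exact evaluation of $H(m,n)$ in Lemma \ref{cor:lemmacritical2} really does collapse $B$ to rank one \emph{exactly} at $u=u_i$. This rank deficiency is the algebraic shadow of the independence that singles out $u_i$ as the ``independent'' value of the anisotropy; at other values of $u$ the off-diagonal blocks have full rank and the particle locations on a horizontal line are correlated, as Theorem \ref{intro:mainthm} shows.
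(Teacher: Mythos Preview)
Your proposal is correct and follows essentially the same route as the paper. Both arguments apply Lemma \ref{pfaffexp} to the two {\bf a} edges, reduce the $4\times 4$ Pfaffian to $(1-v)^2$ plus a contribution from the off-diagonal $2\times 2$ block, compute $1-v=x/(1+x)$ from $K^{-1}(v_3,v_4)=1/(x(1+x))$, and then kill the off-diagonal contribution using the coincidence of inverse Kasteleyn entries at $u=u_i$ coming from Lemma \ref{cor:lemmacritical2}. The paper phrases the last step via simultaneous row/column operations and the perfect-matching interpretation of the Pfaffian, whereas you phrase it as $\det B=0$ by rank deficiency; these are the same argument. Your caution about signs is well placed: the paper itself writes the key identity once as $K^{-1}(v_3,v_3(n,-n))=K^{-1}(v_3,v_4(n,-n))$ and once with a minus sign, and either version forces the two columns (or rows) of $B$ to be proportional, hence $\det B=0$.
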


\begin{proof}
We can calculate the joint probability via the inclusion-exclusion formula given in Lemma \ref{pfaffexp}.  Hence, we require
\begin{align}
 \Pf\left( \begin{array} {cccc}
0 & \P(U(0,t)) & -x K^{-1} (v_3, v_3(n,-n))& -xK^{-1} (v_3, v_4(n,-n)) \\
\dots & 0  &-xK^{-1} (v_4, v_3(n,-n))& -xK^{-1} (v_4, v_4(n,-n)) \\
\dots &  \dots & 0 & \P(U(n,t))\\
\dots & \dots& \dots &0\end{array}\right)\label{hpart:critical}
 \end{align}
 where $v_i$ for $i \in \{1, \dots, 6\}$ represents a vertex $i$ in the fundamental domain and $v_i(m,n)$ represents vertex $i$ a distance $(m,n)$ from the fundamental domain.
 
From Lemma~\ref{expect:prop:Na}, $\P(U(0,t))=\P(U(n,t))=x/(1+x)$.  It remains to calculate the interaction between the two edges, which can be found by invoking Lemma \ref{cor:lemmacritical2}. Due to $K^{-1} (v_3, v_3(n,-n))=-K^{-1} (v_3, v_4(n,-n))$ for $u=\critical$, by symmetry  (\ref{hpart:critical}) becomes  
\( \nonumber
 \Pf\left( \begin{array} {cccc}
0 & \P(U(0,t)) & 0 & 0 \\
\dots & 0  & -xK^{-1}(v_4,v_3(n,-n)) & xK^{-1}(v_4,v_3(n,-n))\\
\dots & \dots & 0 & \P(U(n,t))\\
\dots & \dots & \dots &0\end{array}\right).
\)
The Pfaffian of the above matrix represents counting the number of weighted perfect matchings of a complete graph with four vertices $w_1, \dots,w_4$, with the edges weighted given by the absolute value of the entry of the above matrix, that is, the absolute value of the entry $(i,j)$ is the weight of the edge $(w_i,w_j)$.  We can only have one possible contribution (the edges $w_1w_2$ and $w_3 w_4$) which gives the desired result.
\end{proof} 

The above method can be generalized to consider the locations of an arbitrary number of particles.  

\begin{prop}
For $i_1<\dots <i_n$, 
\( \nonumber
	\P (U(i_1,t),\dots,U(i_n,t))= \frac{x^{n}}{(1+x)^n}.
\)
\end{prop}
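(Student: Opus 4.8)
The plan is to generalize the Pfaffian computation of Proposition~\ref{uc2}, combining the inclusion–exclusion formula of Lemma~\ref{pfaffexp} with the exact cancellation provided by Lemma~\ref{cor:lemmacritical2} at $u=u_i$. By Lemma~\ref{pfaffexp}, $\P(X(i_1,t),\dots,X(i_n,t))=\Pf(M)$ where $M=I^{sk}_n-K_n$ is a $2n\times 2n$ antisymmetric matrix assembled from inverse Kasteleyn entries: its $a$-th diagonal $2\times 2$ block is $\left(\begin{smallmatrix}0&p\\-p&0\end{smallmatrix}\right)$ with $p=1-xK^{-1}(v_3,v_4)=x/(1+x)$ (computed by residues exactly as in Proposition~\ref{uc2}, independent of $a$ by translation invariance), and its $(a,b)$ off-diagonal block, for $i_a<i_b$ at horizontal separation $m=i_b-i_a$, is
\[
\left(\begin{array}{cc} -K^{-1}(v_3,v_3(m,-m)) & -K^{-1}(v_3,v_4(m,-m)) \\ -K^{-1}(v_4,v_3(m,-m)) & -K^{-1}(v_4,v_4(m,-m)) \end{array}\right).
\]
First I would record the two consequences of Lemma~\ref{cor:lemmacritical2} that are the heart of the matter: at $u=u_i$, for every $m\ne 0$,
\[
K^{-1}(v_3,v_4(m,-m))=-K^{-1}(v_3,v_3(m,-m)),\qquad K^{-1}(v_4,v_4(m,-m))=-K^{-1}(v_4,v_3(m,-m)),
\]
the second identity following from the same residue computation as the first. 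Consequently every off-diagonal block has the rank-one form $w_{ab}\,(-1,\,1)$, with a \emph{universal} right factor $(-1,\,1)$ independent of the pair $(a,b)$ and of the separation.

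Next I would exploit this uniform structure by a congruence. Let $B=I_n\otimes T$ with $T=\left(\begin{smallmatrix}1&0\\1&1\end{smallmatrix}\right)$; since $\det T=1$ we have $\Pf(B^{\top}MB)=\det(B)\,\Pf(M)=\Pf(M)$. A direct $2\times 2$ computation gives $T^{\top}JT=J$ for $J=\left(\begin{smallmatrix}0&1\\-1&0\end{smallmatrix}\right)$ (so the diagonal blocks are unchanged), while $(-1,\,1)T=(0,\,1)$, so each block with $a<b$ becomes supported only in its second column, and each block with $a>b$, being minus the transpose of its mirror, becomes supported only in its second row. Thus, in $\tilde M:=B^{\top}MB$, the odd-indexed vertex $2a-1$ has nonzero entries only in column $2a$ (value $p$) and in columns $2b$ with $b>a$; in particular it is joined neither to another odd vertex nor to a lower even vertex.

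Finally I would read off $\Pf(\tilde M)$ combinatorially. In any perfect matching contributing to the Pfaffian, vertex $2n-1$ can only be matched to $2n$; then vertex $2n-3$ can only be matched to $2n-2$; proceeding downward, $2a-1\leftrightarrow 2a$ is forced for every $a$. Hence the canonical matching is the unique contributing one, its sign is $+1$, and its weight is $\prod_{a=1}^{n}p=\bigl(x/(1+x)\bigr)^{n}$, which gives the claim since $\Pf(M)=\Pf(\tilde M)$. The only substantive step is the first one — checking the two exact identities from Lemma~\ref{cor:lemmacritical2} and verifying that the resulting rank-one blocks all carry the \emph{same} right factor regardless of separation; once that is in hand the congruence and the triangular forcing argument are short and mechanical. (One could instead argue directly from the rank-one structure, pairing up non-canonical matchings, but introducing $T$ makes their cancellation transparent.)
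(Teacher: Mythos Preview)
Your proof is correct and follows essentially the same approach as the paper: both use Lemma~\ref{pfaffexp} together with the exact identities from Lemma~\ref{cor:lemmacritical2} (making the two columns of each off-diagonal block negatives of one another) to reduce the Pfaffian to the product of diagonal entries $(1-xK^{-1}(v_3,v_4))^n$. The paper phrases the reduction as ``applying the graphical representation of the Pfaffian inductively,'' whereas your congruence by $B=I_n\otimes T$ with $T=\left(\begin{smallmatrix}1&0\\1&1\end{smallmatrix}\right)$ makes the triangular forcing transparent in one stroke; these are the same row/column operation in different clothing.
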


\begin{proof}

Using the inclusion-exclusion formula given in Lemma \ref{pfaffexp},  $\P (U(i_1,t),\dots,U(i_n,t))$  can be written as a Pfaffian of a matrix with block diagonals given by
\( \nonumber
	\left( \begin{array}{ll}
		0 &\P (U(i_j,t)) \\
		-\P (U(i_j,t))  & 0 \end{array} \right)
\)
for $1 \leq j \leq n$ and off-diagonal blocks given by
\(\nonumber
	\left( \begin{array}{ll}
		-x K^{-1} (v_3,v_3(i_k,-i_k)) &-x K^{-1} (v_3,v_4(i_k,-i_k)) \\
		-x K^{-1} (v_4,v_3(i_k,-i_k)) &-x K^{-1} (v_4,v_4(i_k,-i_k))\end{array} \right)
\)
for $k \in \{1, \dots, n\}$.  Using  $-K^{-1} (v_3,v_3(i_k,-i_k))=K^{-1} (v_4,v_3(i_k,-i_k))$ and applying the complete graph representation of the Pfaffian inductively described in Proposition \ref{uc2}, we can write
\( \nonumber
	\P (U(i_1,t),\dots,U(i_n,t))= \prod_{j=1}^n \P(U(i_j,t))=\frac{x^n}{(1+x)^n}.
\) 
\end{proof}

The locations of the particles are independent and Bernoulli($\frac{x}{1+x}$).  It now remains to look at the dynamics.

\begin{prop} \label{indpt:prop:bedges}

For an arbitrary horizontal line, dimers covering {\bf b} edges are iid with distribution Bernoulli($1/2 - \frac{\sqrt{1 - x}}{2 \sqrt{1 + x}}$).
\end{prop}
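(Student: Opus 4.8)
The plan is to follow, essentially verbatim, the route used above for the {\bf a} edges in Proposition \ref{uc2} and the proposition immediately after it. Fix a horizontal line and list the {\bf b} edges it crosses; each of them is a translate of one of the two {\bf b} edges of the fundamental domain (those joining $v_2$ to $v_5$ and joining $v_1$ to $v_6$), so its two endpoints are translates of fixed vertices. For edges $e_1,\dots,e_n$ along the line, the local statistics formula (\ref{sec2:localstats}) then expresses $\P(e_1,\dots,e_n)$ as a product of Kasteleyn weights times the Pfaffian of the $2n\times 2n$ matrix whose $2\times 2$ diagonal blocks encode single edges and whose off-diagonal blocks are interaction entries $K^{-1}(\cdot,\cdot(k,-k))$.

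First I would evaluate the single-edge probability. Just as $K^{-1}(v_3,v_4)=1/(x(x+1))$ was obtained by a residue computation in Proposition \ref{uc2}, a residue computation at $u=u_i=\critical$ gives the inverse-Kasteleyn entry belonging to a {\bf b} edge, and multiplying by the weight $b=u_i x$ produces $p:=\half-\sqrt{1-x}/(2\sqrt{1+x})$. As a sanity check $p=\tfrac{x}{2}-\tfrac{x^2}{4}+O(x^3)$, so $2p$ matches the expansion $\E_\mu[N_b]=x+O(x^2)$ of Lemma \ref{expect:lem:expand}.

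The core of the argument is the factorization $\P(e_1,\dots,e_n)=p^{\,n}$. Using the exact formula for $H(m,n)$ in Lemma \ref{cor:lemmacritical2}, I would compute the interaction entries between the {\bf b}-edge vertices on the line and verify, at $u=u_i$, the analogue of the identity $K^{-1}(v_3,v_3(n,-n))=-K^{-1}(v_3,v_4(n,-n))$ that was the crux of Proposition \ref{uc2}; this makes the two rows (and the two columns) attached to any one {\bf b} edge coincide up to sign outside its diagonal block, so every off-diagonal $2\times 2$ block of the Pfaffian matrix has rank one with columns that are negatives of one another. In the signed perfect-matching expansion of the Pfaffian (see \cite{god:93}), every matching using an inter-edge entry is then cancelled, and the only surviving term pairs the two endpoints of each {\bf b} edge with each other, giving $\prod_{i=1}^n p = p^{\,n}$. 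The same induction used in the proposition following Proposition \ref{uc2} upgrades this to i.i.d.\ Bernoulli$(p)$, which is the assertion.

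The main obstacle is the bookkeeping in the previous step: a horizontal line meets the two distinct {\bf b}-edge types in alternation, so one must track precisely which translates of $v_1,v_2,v_5,v_6$ occur and check that the Kasteleyn signs make the cancellation exact — this is exactly where the special value $u=u_i$ is used, as in the {\bf a}-edge case. Everything else is a routine repetition of the residue and Pfaffian manipulations already performed in this section; the independence obtained here is also what justifies the ``each trajectory is independent'' clause of Theorem \ref{uindpt:thm:main}.
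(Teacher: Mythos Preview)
Your proposal is correct and follows essentially the same approach as the paper: compute the single-edge probability by a residue, use Lemma \ref{cor:lemmacritical2} to obtain exact identities among the interaction entries (the paper's versions are $K^{-1}(v_5,v_5(n,-n))=K^{-1}(v_5,v_2(n,1-n))$ and $K^{-1}(v_6,v_6(n,-n))=K^{-1}(v_6,v_1(n+1,-n))$), and use these to collapse the Pfaffian to a product via row/column operations. The paper treats left- and right-going {\bf b} edges separately before combining, exactly the bookkeeping you anticipated; your sign in the ``analogue'' identity is not quite the paper's (equality rather than negation), but the cancellation mechanism is identical.
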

\begin{proof}
We consider an arbitrary horizontal line.  Suppose that $L_i$ represents the event that a dimer covers a {\bf b} edge going left (i.e. a dimer covers the edge $(v_5,v_2(0,-1))$) from the fundamental domain corresponding to $U(i,t)$ for $t$ fixed. Let $R_i$  denote the event that a dimer covers a {\bf b} edge going right (i.e. a dimer covers the edge $(v_6,v_1(-1,0))$)  from the fundamental domain corresponding to $U(i,t)$ for $t$ fixed.   We first consider the probability of two left going {\bf b} edges a distance $n$ away.  Without loss of generality, we can consider one of these edges to be located at the origin. This is given by
\begin{align}
&\P( L_0, L_n)=u^2 x^2 \nonumber \\
&\Pf \left( \begin{array}{cccc}
0& K^{-1} (v_5,v_2(0,-1)) & K^{-1}(v_5,v_5(n,-n)) & K^{-1}(v_5, v_2(n,-n-1)) \\
\dots & 0 & K^{-1}(v_2,v_5(n,-n+1)) & K^{-1}(v_2,v_2(n,-n)) \\
\dots & \dots & 0& K^{-1}(v_5,v_2(0,-1))\\ 
\dots & \dots & \dots & 0 \end{array}
\right) \nonumber
\end{align}
where we have used `$\dots$' to denote the lower triangle entries (the matrix is anti-symmetric).  Then, explicit calculation using Lemma \ref{cor:lemmacritical2} gives
\(	\nonumber
	K^{-1}(v_5,v_5(n,-n)) = -K^{-1}(v_2, v_5(n,-n+1))
\)
and 
\( \nonumber
	K^{-1}(v_2,v_2(n,-n)) = -K^{-1}(v_5, v_2(n,-n-1)).
\)
Therefore, using row and column operations on the above matrix gives
\begin{eqnarray*}
\P (L_0, L_n)& =& u^2 x^2 (K^{-1}(v_5,v_2(0,-1)))^2\\
&=& \frac{1-\sqrt{1-x^2}}{2(1+x)}.
\end{eqnarray*}
which can be seen by Lemma~\ref{expect:thm:expectb} because $\E[N_b]/2 = u x K^{-1}(v_5,v_2(0,-1))$ by~\eqref{sec2:localstats}. 

A similar result for $\P(R_0,R_n)$ is true, but is reliant on 
\( \nonumber
	K^{-1}(v_6,v_6(n,-n)) 	= 	K^{-1}(v_1, v_6(n+1,-n)).
\) 
and
\( \nonumber
	K^{-1}(v_1,v_1(n,-n)) 	= 	K^{-1}(v_6, v_1(n-1,-n)).
\)

The above calculation can be generalized to find the joint probabilities of observing dimers covering any collection of rightward and leftward {\bf b} edges located on the same horizontal line.  Again, this joint probability is the product of the probabilities of observing a dimer covering each edge. Furthermore, we can show that the probability of observing $n$ creations along a horizontal line, is given by the product of the probabilities of a dimer covering each edge.  For example, 

\( \nonumber
	\P(L_0,R_0, L_n,R_n)=u^4 x^4 (K^{-1}(v_5,v_2(0,-1)) )^2(K^{-1} (v_6,v_1(-1,0)))^2.
\)
\end{proof}

A further calculation shows that the trajectories of particles are independent.  This relies on computing the possible trajectories of the particles given their locations.  This calculation follows in the same vein as the previous propositions.

\begin{lemma}
Given an {\bf a} edge (i.e no particle), the probability that there is a creation directly above (or below) is $(1-\sqrt{1-x^2})/2$.  
\end{lemma}

\begin{proof}
The probability of a creation is equal to $u^2 x^2 (K^{-1}(v_5,v_2(0,-1)) )(K^{-1} (v_6,v_1(-1,0)))$ which is equal to 
$(1-\sqrt{1-x^2})/(2+2x)$.  The above event contains the event that there is an {\bf a} edge covered by a dimer on the previous level.  To find the probability of a creation given there is no particle on the previous level, it suffices to divide the probability of a creation by the probability that there is an {\bf a} edge covered by a dimer.  This gives $(1-\sqrt{1-x^2})/2$.

\end{proof}

\begin{proof}[Proof of Theorem \ref{uindpt:thm:main}]

For $u=\critical$, the dimer covering of the Fisher lattice can be thought as follows:
\begin{itemize}
\item Take an arbitrary horizontal line along the {\bf a} edges.  With product measure, we can place dimers on {\bf a} edges with an iid  Bernoulli ($\frac{1}{1+x})$ distribution. 
\item If there is no dimer covering an {\bf a} edge on the row below (or above), then choose a dimer to cover either the left or right {\bf b} edge directly above (or below) with probability $\half$.  This is a direct consequence of seeing one {\bf b} edge conditioned on there not being a {\bf b} edge on the row below (above).  The surrounding dimers are automatically selected from this probabilistic choice.  
\item If there is a dimer covering an {\bf a} edge on the row below (above), then with probability $(1 - \sqrt{1 - x^2})/2$, cover both {\bf b} edges directly above (below)  {\bf b} edges. 
\end{itemize}

Due to the independence of the entries,  the model can be viewed as a one dimensional noisy voter model with two colors, say red and blue.  This description of this noisy voter model differs by the one above in the following sense:
for time $t$, the state space is given by $\{R,B\}^{2\Z}$ for $t\in 2\Z$ and $\{R,B\}^{2\Z+1}$ for $t \in 2 \Z+1$. If the two neighboring sites on the previous level have different colors, the site chooses its color randomly between the two colors.    If the two neighboring sites on the previous level sites have the same color, the site flips its color with probability $(1 - \sqrt{1 - x^2})/2$ to the other color.  

The particles in the dimer model represent the boundary between the two colors.  A site contained within a connected region of the same color can flip color with Bernoulli$(1 - \sqrt{1 - x^2})/2$.  This represents the noise. Hence, the particle model is in direct correspondence with the noisy voter model. The scaling window $(x,x^2)$ of the noisy voter model was constructed in \cite{Fon:06} and named the Continuum Noisy Voter model.

\end{proof}

\section{The scaling around $u_c$ and $u_i$}\label{sec:inbet}

This section focuses on the behavior exhibited when $u=u_\g=(1- \sqrt{1- \g x^2})/( \g x^2)$ with $\g  \in \R \backslash \{-1,0,1\}$, $u_{0}=1/2$  and  $|\g|=o(1/x)$(i.e. $u=1/2-\g x^2/8$).  We choose the same scaling window used for $u=u_c$ and $u=u_i$ and find the stationary measure for the locations of particles on a horizontal line in the scaling window $(x,x^2)$.

\begin{lemma} \label{inbet:lem:res} 

Suppose that $n x= \alpha$ where $\alpha \in [0,\infty)$,  $\g \in \R \backslash \{1\}$ and $u=u_\g$. We have
\( \nonumber
	K^{-1}(v_3,v_3(n,-n))=  E_1^\g (\alpha) +O(x)
\) 
and 
\( \nonumber
	K^{-1}(v_3,v_4(n,-n))= -E_2^\g (\alpha)+O(x)
\) 
where
\( \nonumber
	 E_1^\g (\alpha)= \frac{1}{\pi  } \int_{e_2(\g)}^{e_1(\g)} \frac{2 p e^{-\alpha p} }{\sqrt{-4-8\g-4 \g ^2+12p^2-4\g p^2-p^4}} dp
\)
and 
\( \nonumber
	E_2^\g (\alpha)=  \frac{1}{\pi  } \int_{e_2(\g)}^{e_1(\g)} \frac{(-2-2\g-p^2) e^{-\alpha p} }{2\sqrt{-4-8\g-4 \g ^2+12p^2-4\g p^2-p^4}} dp
\)
where $e_1(\g)=2+\sqrt{2(1-\g)}$ and $e_2(\g)=(2-\sqrt{2(1-\g)})\mathbb{I}_{\g>-1}+(-2+\sqrt{2(1-\g)})\mathbb{I}_{\g<-1}$.

\end{lemma}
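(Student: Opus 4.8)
The plan is to follow the argument used for Lemma~\ref{uc:lem:longrangeh} (the case $\g=-1$), but carrying the anisotropy $u=u_\g$ through the computation instead of specialising it to $u_c$. First I would write $K^{-1}(v_3,v_3(n,-n))$ and $K^{-1}(v_3,v_4(n,-n))$ as the double Fourier-type integrals over $\mathbb{T}^2$ supplied by the local statistics formula~(\ref{sec2:localstats}): with $a=x$, $b=ux$ these are integrals of rational functions of $z,w$ with denominator $P(z,w)$ and numerators of the form $z^{n-1}w^{-n-1}(z^2-w^2)$ (and its $v_3$-$v_4$ analogue). For fixed $w$ on the unit circle, $P(\,\cdot\,,w)$ is a quadratic in $z$ with exactly one root $y_2(w)$ inside $|z|=1$ (this uses the root count of Section~\ref{sec:exp:analysys}), so the $z$-integral is evaluated by a single residue. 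This leaves a one-dimensional integral over $|w|=1$ whose integrand contains $\sqrt{\Delta(w)}$, where $\Delta$ is, up to an explicit prefactor, the $z$-discriminant of $P(\,\cdot\,,w)$.

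Next I would rationalise $\sqrt{\Delta(w)}$ and deform the contour onto its branch cut. Writing $u=u_\g$ and using again Section~\ref{sec:exp:analysys}, the branch points of $\sqrt{\Delta}$ sit near $w=-1$ at separation of order $x$, and after the substitutions below they rescale to $\pm e_1(\g),\pm e_2(\g)$. Concretely I would perform the same chain of M\"obius changes of variable as in the proof of Lemma~\ref{uc:lem:longrangeh} — one turning $\sqrt{\Delta(w)}$ into the square root of a product of linear factors, then a further substitution $w\mapsto p$ combined with the reparametrisation $n=\a/x$ — and then push the $|w|=1$ contour onto the cut, which in the $p$-variable is the segment from $e_2(\g)$ to $e_1(\g)$ (a genuinely complex segment when $\g>1$). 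The contribution of the spurious pole arising from the $(1+2w-x^2)$-type factor in the denominator has to be carried along; as for $\g=-1$ it only enters at order $x$ (for $v_3$-$v_4$ it is what would produce an $E_3$-type correction, here absorbed in the $O(x)$).

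The heart of the proof is the limit $x\to 0$, which must be taken \emph{after} the substitution $n=\a/x$: a factor of the shape $(\,\mathrm{base}(p)\,)^{n}$ whose base tends to $1$ at rate $x$ produces the exponential $e^{-\a p}$, exactly as the factor $e^{-2i\a/t+2i\a t}$ appeared in the $\g=-1$ computation, while the algebraic prefactor converges to $\big((p^2-e_1(\g)^2)(p^2-e_2(\g)^2)\big)^{-1/2}$; here one checks the identity $4+8\g+4\g^2-12p^2+4\g p^2+p^4=(p^2-e_1(\g)^2)(p^2-e_2(\g)^2)$, which follows from $e_1(\g)^2+e_2(\g)^2=4(3-\g)$ and $e_1(\g)^2e_2(\g)^2=4(1+\g)^2$. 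Since the rescaled contour is compact and the integrand is uniformly bounded on it for all small $x$ (the branch-point singularities being integrable and staying at positive distance from the bulk of the contour), bounded convergence permits interchanging limit and integral, yielding $E_1^\g(\a)$ and $E_2^\g(\a)$ with the stated numerators $2p$ and $\half(-2-2\g-p^2)$. The three regimes $\g<-1$, $-1<\g<1$, $\g>1$ enter only through which pair of branch points the cut joins — precisely the content of the indicators $\mathbb{I}_{\g>-1},\mathbb{I}_{\g<-1}$ in the definition of $e_2(\g)$ — and through whether $e_2(\g)$ is real or $e_1(\g),e_2(\g)$ form a conjugate pair; I would carry out $|\g|<1$ in detail and obtain $\g>1$ and $\g<-1$ by analytic continuation in $\g$, as was done for the elliptic integrals in Theorem~\ref{expect:thm:expectb}. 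The main obstacle is the bookkeeping in the chain of substitutions together with the uniform bound near the branch points needed to justify bounded convergence; everything else is a direct, if lengthy, residue-and-deformation computation.
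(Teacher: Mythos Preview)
Your proposal is correct and would work, but the paper takes a shorter route that avoids the chain of M\"obius substitutions from Lemma~\ref{uc:lem:longrangeh}. Instead of integrating in $z$ first and then rationalising $\sqrt{\Delta(w)}$, the paper exploits the specific index pattern $z^{n}w^{-n}$ by substituting $z=\zeta w$ at the outset. This collapses the bivariate factor to $\zeta^{n}$ and makes $\tilde P(\zeta w,w)$ a plain quadratic in $w$, so the $w$-residue is immediate and one is left with a single integral over $|\zeta|=1$ whose denominator is $\sqrt{A(\zeta)}$ with $A$ a quartic in $\zeta$. The scaling window is then captured by the linear change $\zeta=1-x\xi$: the four roots of $A$ sit at $1\pm e_j(\g)x+O(x^2)$, so in $\xi$ they converge to $\pm e_1(\g),\pm e_2(\g)$, the contour is pushed around the two roots with positive real part, and dominated convergence gives the integrals $E_1^\g,E_2^\g$ directly. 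Your approach buys generality --- as the paper itself remarks just after the proof, the method of Lemma~\ref{uc:lem:longrangeh} extends to unequal powers of $z$ and $w$, which the $z=\zeta w$ trick does not --- while the paper's approach buys brevity: no rationalisation, no semi-circular contours, and no need to track the spurious pole from $(1+2w-x^2)$ separately.
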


We have that $e_1,e_2$ are real for $\g<1$ and are complex for $\g>1$.  Note that $E_1$ and $E_2$ are real for all values of $\g \not =1$. 

Lemma \ref{inbet:lem:res} provides the inverse Kasteleyn entries which can be used to compute correlations.   For $\g>1$, we have that $\overline{e_1(\g)}=e_2(\g)$. For $\g<1$,  it can be shown that $|E_1^{\g}(\alpha)|>|E_2^{\g}(\alpha)|$.  This implies that any pair of particles is positively correlated.  However, for $\g>1$, it can be shown that $|E_1^{\g}(\alpha)|<|E_2^{\g}(\alpha)|$, which shows that any pair of particles is negatively correlated.

The entry for $K^{-1}(v_3,v_4)$ can also be computed up to $O(x)$ in a similar fashion as the other entries in Lemma \ref{inbet:lem:res} . Let $e(\gamma)=\lim_{x \to 0}1/x (1- x |K^{-1}(v_3,v_4)|)$ which is the density of seeing a particle because
\begin{equation}
 x|K^{-1}(v_3,v_4)|= x \Pf \left( \begin{array}{cc}
                            0 & K^{-1}(v_3,v_4) \\
                            -K^{-1} (v_3,v_4) & 0
                           \end{array} \right)
\end{equation}
 is the probability of seeing an {\bf a} edge.  We have
\( \label{inbet:probofedge}
	e(\gamma)=\lim_{x\to 0} \frac{1}{x}\left(1- x|K^{-1}(v_3, v_4)| \right)=\frac{1}{\pi i} \int_{e_2(\g)}^{e_1(\g)}  \frac{2-2 \g+p^2 }{2\sqrt{4+8\g+4 \g ^2-12p^2+4\g p^2+p^4}} dp.
\)
Under the scaling window $(x,x^2)$ and the limit $x \to 0$, the particle locations are given by Theorem \ref{intro:mainthm}.

\begin{proof}[Proof of Theorem \ref{intro:mainthm} for $u \not = u_c$]
 The structure of the matrix comes from Lemma \ref{pfaffexp}.  The relevant entries come from Lemma \ref{inbet:lem:res}.  The proof of weak convergence is the same as the proof in Theorem \ref{intro:mainthm} for $u=u_c$.
\end{proof}

It remains to compute the inverse Kasteleyn entries. 

\begin{proof}[Proof of Lemma \ref{inbet:lem:res}]
We only compute $K^{-1}(v_3,v_3(n,-n))$ as $K^{-1}(v_3,v_4(n,-n))$ can be computed following the same computational steps.  The main idea of the proof is similar to the main idea of the proof of Lemma~\ref{uc:lem:longrangeh}.  That is, we rearrange and expand the integral until we have a term of the form $(1+\beta x +O(x^2))^n$ where $\beta$ is independent of $x$ and $n$ while ensuring that the integral is well-defined.

By the cofactor expansion given in Appendix~\ref{app:cofactor}, we have
\(  \nonumber
	K^{-1}(v_3,v_3(n,-n))=\frac{1}{(2\pi i)^2} \int_{\mathbb{T}^2} \frac{-u^2 x^2(w^{-n+1} z^{n-1}-w^{-n-1} z^{n+1})}{ P(z,w)} \frac{dz}{z}\, \frac{dw}{w}.
\)	
Taking the transformation $z= v w$ gives 
\(  \nonumber
	K^{-1}(v_3,v_3(n,-n))= \frac{1}{(2\pi i)^2} \int_{\mathbb{T}^2} \frac{u^2 x^2 v^{ n-1} ( v^2-1)} { v w P( v w, w)} dw \,dv.
\)
Using Appendix~\ref{app:sec:factorization}, we can write
\begin{equation}
      v wP(vw,w)= u x^2 (1-u^2 x^2)v (1+v) (w-Y_-(v))(w-Y_+(v))
\end{equation}
where both $Y_\pm(w)$ are defined in~\eqref{app:eqn:Ypm}.  As we have $|Y_+(w)|<1$ for $|w|=1$ and $|Y_-(w)|>1$ for $|w|=1$, we have 
\(  
	K^{-1}(v_3,v_3(n,-n))=\frac{1}{2\pi i} \int_{|v|=1}  \frac{u v^{ n-2} ( v-1)}{(1-u^2 x^2)A(v)} d v \label{inbet:lempr:trans}
\)
 where $A(v)$ is defined in~\eqref{app:expect:thmproof:av}.  
 
Recall the expressions $e_1(\g)$ and $e_2(\g)$ and denote $e_3(\g)=3-\g + 2 \sqrt{2(1-\g)}$ and $e_4(\g)=3-\g-2 \sqrt{2(1-\g)}$.  From Appendix~\ref{app:expect:thmproof:av}, we have that $(A(v))^2$ is a quartic polynomial in $v$.  Using the expansions of the roots of $(A(v))^2$ given in~\eqref{app:rootrug} and~\eqref{app:rootsug}, we can write
 \begin{align}
 	(A(v))^2=(v - (1 + e_1 x + e_3 x^2)) (v - (1 + e_2 x + e_4 x^2))\nonumber \\
		\times (v - (1 - e_1 x + e_3 x^2)) (v - (1 - e_2 x + e_4 x^2))+O(x^3). \label{inbet:lempr:den}
 \end{align}
 Taking the change of variables $v= 1-x \xi$ transforms $(A(v))^2$ given in (\ref{inbet:lempr:den}) into $(\tilde{A}(\xi))^2$ which is given by
 \begin{align}
 	(\tilde{A}(\xi))^2:= x^4( (\xi - ( e_1  + e_3 x)) (\xi - ( e_2  + e_4 x)) \nonumber \\
	\times (\xi + ( e_1  - e_3 x)) (\xi + ( e_2  - e_4 x))+O(x^6).  \nonumber
 \end{align}
 In other words, this transformation ensures that the roots of the quartic polynomial will not pairwise contract to the same point when taking the limit as $x$ tends to 0.  
 Under these  change of variables, $v=1-x \xi$,  (\ref{inbet:lempr:trans}) becomes
 \(  \nonumber
	K^{-1}(v_3,v_3(n,-n))=-\frac{1}{2\pi i}\int_{|\xi-1/x|=1/x} \frac{x u ( 1-x \xi)^{n-2}( ( 1-x \xi)-1)}{ \tilde{A}(\xi)} d\xi. 
\) 
 The contour of integration can be deformed to a contour independent of $x$ surrounding the two roots of $A(\xi)$ with positive real part, namely a contour surrounding $e_1(\g)$ and $e_2(\g)$ which means we can now take limits as $x$ tends to 0 of the integrand.  The numerator of the above integrand is equal to  
 \(  \nonumber
 	-\half e^{-\alpha \xi} \xi x^2 +O(x^3)
\)
as where $A(\xi)$ is equal to 
\(  \nonumber
	-\frac{1}{4} x^2 \sqrt{4+8\g + 4\g^2-12\xi^2+4 \g \xi^2+\xi^4} +O(x^3)
\)
 By the dominated convergence theorem, we obtain
 \(  \nonumber
 	K^{-1}(v_3,v_3(n,-n))=-\frac{1}{2\pi i}\int \frac{2 e^{-\alpha \xi} \xi}{\sqrt{4+8\g + 4\g^2-12\xi^2+4 \g \xi^2+\xi^4}} d\xi.
\)
where the integral is over a contour surrounding $e_1(\g)$ and $e_2(\g)$.  Splitting up the contour into two line integrals gives the required result.

\end{proof}

Although Lemma \ref{inbet:lem:res} incorporates Lemma \ref{uc:lem:longrangeh}, the technique used to calculate Lemma  \ref{uc:lem:longrangeh} is more general.  Moreover, we can use the same technique found in Lemma  \ref{uc:lem:longrangeh} when the exponents of $z$ and $w$ are different.   The calculation for Lemma \ref{inbet:lem:res} cannot be generalized in this way.

\subsection{Correlation Length}

This subsection focuses on proving Lemma \ref{intro:lem:corlen}. Recall  that the correlation length $\xi(\g)$ given in  (\ref{intro:def:corlength}), is defined in terms $C(\alpha, \g)$ where $C(\alpha,\g)=E_1^\g (\alpha)^2-E_2^\g(\alpha)^2$ for $\g\not=1$ and $C(\alpha,1)=0$ for all $\alpha \geq 0$.

\begin{proof}[Proof of Lemma \ref{intro:lem:corlen}]

We  consider first $\g<-1$ and $-1<\g<1$ simultaneously.  When $\g<-1$ and $-1<\g<1$ we have $e_2(\gamma)<e_1(\gamma)$.  The numerator of the integral expansion of $E_1^\g(\a)+E_2^\g(\a)$ is $-(\g+1)-2p-1/2p^2$ which has roots $e_1(\gamma)$ and $e_2(\gamma)$.  Therefore, we can write
\(  \nonumber
	E_1^\g (\a ) +E_2^\g (\a)= \frac{1}{2\pi} \int_{e_2(\g)}^{e_1(\g)} e^{-\alpha p} \sqrt{\frac{(p-e_1(\g))(p-e_2(\g))}{((e_1(\g)-4)-p)(p-(e_2(\g)-4))}}\, dp.
\)
Similarly, the numerator of the integral expansion of  $E_1(\g)-E_2(\g)$ is $\g+1-2p+1/2p^2$ which has roots $e_1(\g)-4$ and $e_2(\gamma)-4$.  Therefore, we can write
\(  \nonumber
E_1^\g (\a ) -E_2^\g (\a)= \frac{1}{2\pi} \int_{e_2(\g)}^{e_1(\g)} e^{-\alpha p} \sqrt{\frac{(p-(e_1(\g)-4))(p-(e_2(\g)-4))}{(e_1(\g)-p)(p-e_2(\g))}}\, dp.
\)
For large values of $\alpha$, we can show that $E_1^\g(\a) \pm E_2^\g(\a)$ are approximately $e^{-\alpha e_2(\gamma)}$.  Indeed for  $\g<-1$ and $-1<\g<1$, large values of $\a$ and for $\e>0$, we can write for $C_1=1/(2\pi) \sqrt{(e_1(\g)-e_2(\g))/(4(4-e_1(\g)+e_2(\g)))}$
\begin{align}
	e^{\alpha e_2(\gamma)} (E_1^\g (\a ) +E_2^\g (\a))&=\frac{1}{2\pi} \int_0^{e_1(\gamma)-e_2(\gamma)} e^{-\alpha p} \sqrt{\frac{p(p-e_1(\g)+e_2(\g))}{(e_1(\g)-e_2(\g)-4-p)(p+4))}}\, dp.  \nonumber\\
	&= C_1 \int_0^{\e} e^{-\alpha p} \sqrt{p} \, dp  +O(\alpha^{-1})  \nonumber \\
	&= C_1 \int_0^{\infty } e^{-\alpha p} \sqrt{p} \,dp +O(\alpha^{-1})  \nonumber\\
	&=\frac{C_1 \sqrt{\pi}}{ 2 \sqrt{ \a^3}} +O(\alpha^{-2})  \nonumber
\end{align}
Using a similar argument, we can write for $C_2=1/(2\pi) \sqrt{4(4-e_1(\g)+e_2(\g))/(e_1(\g)-e_2(\g))}$
\begin{align}
	e^{\alpha e_2(\gamma)} (E_1^\g (\a ) -E_2^\g (\a))=\frac{C_2 \sqrt{\pi}}{2 \sqrt{ \a}} +O(\a^{-1}).  \nonumber
\end{align}
As $|C(\alpha,\g)|=|(E_1(\g)+E_2(\g))(E_1(\g)-E_2(\g))|$, we have for large values of  $\alpha$
\(  \nonumber
	\log|C(\a,\g)|=  -2 \alpha e_2(\g) +o(\log \alpha) 
\)
This means that $\xi(\gamma)$ is equal to $1/(2e_2(\g))$ for $\g<-1$ and $-1<\g<1$.  
 
The above argument does not work for $\g=-1$ or for $\g=1$.  However, by taking a series expansion in $\alpha$ of the terms computed in Lemma~\ref{uc:lem:longrangeh}, we have   $C(\alpha,-1) \sim 1/\alpha^2$.  This means that $\xi(-1)=\infty$. For $\g=1$, we have $C(\alpha, 1)=0$ for all $\a \geq 0$ which implies that $\xi (1)=0$. 

For $\g>1$, the numerator of the integral expansion of $E_1^\g(\alpha)+E_2^\g(\alpha)$ is $-(\g+1)-2p-1/2p^2$ which has roots $e_1(\g)$ and $e_2(\g)$ where $\overline{e_1(\g)}=e_2(\g)$.  Therefore, using the change of variables $p \mapsto 2+ip$, we can write
\begin{align}
E_1^\g (\a ) +E_2^\g (\a) &= \frac{1}{2\pi} \int_{e_2(\g)}^{e_1(\g)} e^{-\alpha p} \sqrt{\frac{(p-e_1(\g))(p-e_2(\g))}{((e_1(\g)-4)-p)(p-(e_2(\g)-4))}} \, dp \nonumber \\
&= \frac{e^{-2 \alpha}}{2\pi} \int_{-r}^r \sqrt{ \frac{r^2 -p^2}{r^2-(p+4i)^2}} e^{-i \alpha p} \, dp \label{ch4:corlength:finala}
\end{align}
where $r=+\sqrt{2(\g-1)}$.  Note that $E_1^\g(\alpha)+E_2^\g(\alpha)$ is real.  We have that 
\(  \nonumber
	\frac{1}{2\pi} \int_{-r}^r \sqrt{r^2-p^2}e^{-i \alpha p} dp=\frac{r}{2 \a}  B_J[1,\a r]
\)
where $B_J[n,z]$ is the Bessel J function of the first kind of order $n$.  The right hand side of the above equation decays at rate $\alpha^{-3/2}$ as $\alpha \to \infty$.  

Suppose that 
\(
	\lim_{\a \to \infty} \alpha^{3/2} \int_{-r}^r  \sqrt{ \frac{r^2 -p^2}{r^2-(p+4i)^2}} e^{-i \alpha p}\, dp = 0.\label{ch4:corlength:gbig1}
\)
We have that $1/\sqrt{r^2-(p+4i)^2}$ is analytic on $[-r,r]$. By Mergelyan's theorem \cite{Gam:69}, we can approximate $1/\sqrt{r^2-(p+4i)^2}$ uniformly by polynomials on $[-r,r]$.  Therefore we can write $1/\sqrt{r^2-(p+4i)^2}=\sum_j p_j(p)$ for $p\in[-r,r]$ where $p_j(z)=\a_j z^j$ where $\a_j \in \mathbb{C}$.  By the assumption, for $\e >0$ there exists $N$ such that
\(  \nonumber
	\lim_{\a \to \infty} \left| \alpha^{3/2}  \int_{-r}^r  \sum_{j=1}^N p_j(p) \sqrt{r^2-p^2} e^{-i \a p} dp  \right| < C\e
\)
where $C$ is an arbitrary constant.  However, we have that
\(  \nonumber
	\lim_{\a \to \infty} \left| \alpha^{3/2}  \int_{-r}^r p_{N+1} (p)  \sqrt{r^2-p^2} e^{-i \a p} dp \right| \not = 0
\)
because the above integral can be computed explicitly and can be shown to decay at rate $\alpha^{-3/2}$.  This means that
\(  \nonumber
	\lim_{\a \to \infty} \left| \alpha^{3/2}  \int_{-r}^r  \sum_{j=1}^{N+1} p_j(p) \sqrt{r^2-p^2} e^{-i \a p} dp  \right| > C\e
\)
which contradicts (\ref{ch4:corlength:gbig1}).  Therefore, we have 
\(\label{ch4:corlength:final1}
	\lim_{\a \to \infty} \alpha^{3/2} \int_{-r}^r  \sqrt{ \frac{r^2 -p^2}{r^2-(p+4i)^2}} e^{-i \alpha p} dp \not = 0.
\)

We have a similar argument for $E_1^\g(\alpha)-E_2^\g(\alpha)$. The numerator of the integral expansion of $E_1^\g(\alpha)-E_2^\g(\alpha)$ is given by $\g+1-2p+1/2p^2$ which has roots $e_1(\g)-4$ and $e_2(\g)-4$ where $\overline{e_1(\g)-4}=e_2(\g)-4$.  Therefore, using the change of variables $p \mapsto 2+ip$, we can write
\begin{align}
E_1^\g (\a ) -E_2^\g (\a) &= \frac{1}{2\pi} \int_{e_2(\g)}^{e_1(\g)} e^{-\alpha p} \sqrt{\frac{((e_1(\g)-4)-p)(p-(e_2(\g)-4))}{(p-e_1(\g))(p-e_2(\g))}} dp \nonumber \\
&= \frac{e^{-2 \alpha}}{2\pi} \int_{-r}^r \sqrt{ \frac{r^2-(p+4i)^2}{r^2 -p^2}} e^{-i \alpha p} dp \label{ch4:corlength:finalb}
\end{align}
where $r=+\sqrt{2(\g-1)}$.  We have that 
\(  \nonumber
	\frac{1}{2\pi} \int_{-r}^r \frac{e^{-i \alpha p}}{\sqrt{r^2-p^2}} dp=\frac{1}{2}  B_J[0,\a r]
\)
where $B_J[n,z]$ is the Bessel function of the first kind of order $n$.  The right hand-side of the above equation decays at rate $\alpha^{-1/2}$ as $\alpha \to \infty$.  

Suppose that 
\( 
	\lim_{\a \to \infty} \alpha^{1/2} \int_{-r}^r  \sqrt{ \frac{r^2-(p+4i)^2}{r^2 -p^2}} e^{-i \alpha p} dp = 0.\label{ch4:corlength:gbig2}
\)
We have that $\sqrt{r^2-(p+4i)^2}$ is analytic on $[-r,r]$. By Mergelyan's theorem \cite{Gam:69}, we can approximate $\sqrt{r^2-(p+4i)^2}$ uniformly by polynomials on $[-r,r]$.  Therefore we can write $\sqrt{r^2-(p+4i)^2}=\sum_j p_j(p)$ for $p\in[-r,r]$ where $p_j(z)=\a_j z^j$ where $\a_j \in \mathbb{C}$.  By the assumption, for $\e >0$  there exists $N$ such that
\(  \nonumber
	\lim_{\a \to \infty} \left| \alpha^{1/2}  \int_{-r}^r  \sum_{j=1}^N p_j(p) \frac{e^{-i \a p}}{\sqrt{r^2-p^2}}\, dp \right| < C\e
\)
where $C$ is an arbitrary constant.  However, we have that
\(  \nonumber
	\lim_{\a \to \infty} \left| \alpha^{3/2}  \int_{-r}^r p_{N+1} (p)   \frac{e^{-i \a p}}{\sqrt{r^2-p^2}}\,dp \right|   \not = 0
\)
because the above integral can be computed explicitly and can be shown to decay at rate $\alpha^{-1/2}$.  This means that 
\(  \nonumber
	\lim_{\a \to \infty} \left| \alpha^{1/2}  \int_{-r}^r  \sum_{j=1}^N p_j(z) \frac{e^{-i \a p}}{\sqrt{r^2-p^2}} dp  \right| > C\e
\)
which contradicts (\ref{ch4:corlength:gbig2}). Therefore, we have 
\( \label{ch4:corlength:final2}
	\lim_{\a \to \infty} \alpha^{1/2} \int_{-r}^r  \sqrt{ \frac{r^2-(p+4i)^2}{r^2 -p^2}} e^{-i \alpha p} dp \not = 0.
\)

Therefore, using (\ref{ch4:corlength:finala}), (\ref{ch4:corlength:final1}), (\ref{ch4:corlength:finalb}) and (\ref{ch4:corlength:final2}), we have that for $\g>1$ and large values of $\alpha$,
\(  \nonumber
	\log|C(\alpha,\g)| = \log|(E_1^\g(\alpha) + E_2^\g(\alpha))(E_1^\g(\alpha) - E_2^\g(\alpha))|=-4 \a + o(\log(\a^2)).
\)
We can substitute the above expression for $\log|C(\alpha,\g)|$  into the definition of $\xi(\gamma)$.

\end{proof}


\section{Conclusion} \label{sec:con}

In this paper, we considered a new parameterization of the dimer model on the Fisher lattice.  We were able to completely determine the behavior in the thermodynamic limit when $x \to 0$ and in the scaling window $(x,x^2)$ when $u=u_i$ and obtain partial results in the scaling window for $u=u_\g$, $\g \not=1$.  

For $0<x<1$ and $u=u_c$, David Wilson conjectured (personal communication) that the scaling limit of the particle model with appropriate re-scaling is conformally invariant.  In the same way that the scaling limit of percolation (see \cite{cam:06}) required SLE and CLE (see \cite{Wen:04} or \cite{law:05}),  we expect that under the correct aspect ratio, the scaling limit of the paths in the particle model are given by a random family of curves and loops belonging to $\mathrm{SLE}_3$.

As noted before, we have been unable to find the limiting measure for the scaling window $(x,x^2)$ when $u=(1-\sqrt{1-\g x^2})/(\g x^2)$ for $\g \not =0$ or $1$.  It remains an open problem to determine the scaling window measure for $\g \not =1$.  In particular, we expect that for $\g=-1$, the scaling window measure is scale invariant but not rotationally invariant.   This is due to the underlying dimer model (or Ising model) being anisotropic.    

If we take $u=u_i+\e^2$ where $\e$ is order $x^\alpha$ for $\alpha \in (0,1)$, then some computations show that
\(
	\E[N_b]=\E[N_a]+O(x)=C \e +O(x)
\)
where $C$ is a constant and 
\(
	\E[N_X]=x^2+ \e O(x^2 \log x).
\)
By taking the scaling window $(x^\alpha, x^{2 \alpha})$ implies that the density of particles is of constant order.  For $\alpha < 2/3$, there is a dense set of creations in the scaling window.  For $\alpha>2/3$, then there are no creations or annihilations seen in the scaling window $(x^\alpha, x^{2 \alpha})$.  We expect that the limiting measure to be Dyson Brownian motions.  For $\alpha=2/3$, the creations have a finite density.  Roughly speaking, we expect the repulsion of the particles to be weak enough to allow some annihilations.  


\begin{appendix}

\section{Overview of Appendix}

This appendix provides some additional information about the model including details of the some computations used in this paper.  In Appendix~\ref{app:secA}, we analyze the characteristic polynomial for different values of $u$ and $x$.  In Appendix~\ref{app:cofactor}, we give some of the entries of the cofactor matrix.  In Appendix~\ref{app:localprobs}, we give the computations that were used in Section~\ref{sec:exp}.  Finally, in Appendix~\ref{app:H} we give an expansion of $H$, defined in~\ref{definitionofH}, in terms of $x$ for $u<1/2$, $u=u_c$ before taking the scaling window.  We also compute $H$ exactly when $u=u_i$.

\section{Analysis of $P(z,w)$} \label{app:secA}

The characteristic polynomial for the dimer model on the Fisher lattice with parameters $u$ and $x$ is given by
\(
	P(z,w)=x^2+2u^2 x^2 + u^4 x^6+ \left( u x^2 -u^3 x^4 \right) \left(w+\frac{1}{w}+z+\frac{1}{z} \right) + \left( u^2 x^2-u^2 x^4 \right) \left(\frac{z}{w} + \frac{w}{z} \right)
\)

\subsection{Critical Values}
The \emph{critical values} of the dimer model is defined to be the parameters which give $P(z,w)=0$ for $(z,w) \in \mathbb{T}^2$.  By \cite{Li:10, Li1:10}, for $(z,w)\in \mathbb{T}^2$ we have $P(z,w)=0$ when $(z,w)$ is real.   The values of $u$ when $P(-1,1)=0$ and $P(1,-1)=0$ do not have much significance for this paper.  By considering $P(1,1)=0$ and $P(-1,-1)=0$ we have that
\( \label{ap:charpoly}
	x^2 \pm 4 u x^2 + 4 u^2 x^2 - 2 u^2 x^4 \pm 4 u^3 x^4 + u^4 x^6=0
\)
Solving (\ref{ap:charpoly}) for $u$ in terms of $x$ gives $u=(-1 \pm \sqrt{1+x^2})/x^2$ and $u=(1 \pm \sqrt{1+x^2})/x^2$.  Therefore, the critical values for the dimer model on the Fisher lattice with parameters $(u,x)$ are given by $u=(\pm1+\sqrt{1+x^2})/x^2$ because the other two solutions are negative when $x>0$.  

For fixed $x<1$, we have that the critical value $u=(-1+\sqrt{1+x^2})/x^2 \in (0,1)$  while the other critical value is not in the interval $(0,1)$.  Recall that we defined $u_c=(-1+\sqrt{1+x^2})/x^2$.

\subsection{Factorization of $P(z,w)$} \label{app:sec:factorization}
In this subsection, we consider a factorization of the characteristic polynomial for $u \in (0,1)$ and for $x<1$.  We find that this factorization leads to a degree 4 polynomial (in one variable) whose roots turn out to govern the behavior of the model.  We introduce these roots and analyze their values for $u\in (0,1)$ and $x <1$.

Set $z=v w$, we have that
\( \nonumber
	P(v w,w)v w = C(u,x,v)\left(w- Y_+(v)\right) \left(w-Y_-(v) \right)
\)
where 
\( \nonumber
    C(u,x,v)=u x^2 (1-u^2 x^2)v   (1+v),
\) 
\( \label{app:eqn:Ypm}
	Y_{\pm}(v)=\frac{-u^2 - v - 2 u^2 v - u^2 v^2 + u^2 x^2 + u^2 v^2 x^2 - u^4 v x^4\pm A(v)}{2 (u v + u v^2 - u^3 v x^2 - u^3 v^2 x^2)}.
\)
and
 \begin{align} \label{app:expect:thmproof:av}
	(A(v))^2=(-4 (u + u v - u^3 x^2 - u^3 v x^2) (u v + u v^2 - u^3 v x^2 - u^3 v^2 x^2) +  \\ 				(u^2 + v + 2 u^2 v + u^2 v^2 - u^2 x^2 - u^2 v^2 x^2 + u^4 v x^4)^2). \nonumber
\end{align} 
We have that $|Y_+(v)|<1$ for all $|v|=1$ and $|Y_-(v)|>1$.  We have that $(A(v))^2$ is a polynomial of degree 4 in $v$ and its roots are given by $r(u,x), 1/r(u,x), s(u,x)$ and $1/s(u,x)$ where 
\( \label{ap:root1}
	r(u,x)=\frac{1-2u^2+u^4 x^4 +(1-u^2x^2)\sqrt{(1 - 2 u + u^2 x^2) (1 + 2 u + u^2 x^2)}}{2u^2 (1 + x)^2}
\)
and 
\( \label{ap:root2}
	s(u,x)=\frac{1-2u^2+u^4 x^4 +(1-u^2x^2)\sqrt{(1 - 2 u + u^2 x^2) (1 + 2 u + u^2 x^2)}}{2u^2 (1 - x)^2}
\)
Therefore, we can write
\( \nonumber
	(A(v))^2=  u^4(1-x^2)^2 (v-r(u,x)) ( v-{r(u,x)}^{-1} ) (v-s(u,x) ) ( v -{s(u,x)}^{-1} ).
\)
We now consider the values of the roots of $(A(v))^2$ for fixed $x<1$ and $u\in(0,1)$. 

By plugging in $u=u_c$ into (\ref{ap:root1}) and (\ref{ap:root2}), we obtain $s(u_c,x)=(1+x)^2/(1-x)^2$ and $r(u_c,x)=1$.  Therefore, the roots of $(A(v))^2$ are given by $(1-x)^2/(1+x)^2$, $1$ (twice) and $(1+x)^2/(1-x)^2$ when we take $u=u_c$.

For fixed $0<x<1$ and $0<u<u_c$, we have that $r(u,x)$ and $s(u,x)$ is real due to $1-2u+u^2 x^2>0$.  As the denominator of $r(u,x)$ is greater than the denominator of $s(u,x)$, we have that $r(u,x)<s(u,x)$.  We also have that $s(u,x)<1$. 

When $u=u_i$, we have that $r(u_i,x)=(1-x)/(1+x)$ and $s(u_i,x)=(1+x)/(1-x)$.  Furthermore, we can write 
\( \nonumber
	A(v)=\frac{\left(1-\sqrt{1-x^2}\right)^2}{x^4}(1-x^2)\left( v- \frac{1-x}{1+x} \right) \left( v- \frac{1+x}{1-x} \right)
\)
when $u=u_i$. 

For fixed $0<x<1$ and $u_c<u<u_i$, we still have $r(u,x)$ and $s(u,x)$ are both real due to $1-2u+u^2 x^2>0$ and $r(u,x)<s(u,x)$.  However, we now have $r(u,x)<1<s(u,x)$.

When $u>u_i$, both $r(u,x)$ and $s(u,x)$ are complex.  By manipulating the expression of $r(u,x)$, we have that $\overline{r(u,x)}=1/s(u,x)$.   It follows that the roots of $(A(v))^2$ are given by $r(u,x)$, $1/r(u,x)$, $\overline{r(u,x)}$ and $1/\overline{r(u,x)}$.  

The roots of $(A(v))^2$ can be summarized as follows:
\begin{itemize}
\item For $u<u_c$, the roots are $r(u,x),s(u,x),1/r(u,x)$ and $1/s(u,x)$ with $r(u,x), s(u,x) \in \R$ with $1<r(u,x)<s(u,x)$.
\item For $u=u_c$, the roots are $s(u_c,x),1$ (twice) and $1/s(u_c,x)$ where $s(u_c,x)=(1+x)^2/(1-x)^2>1$.
\item For $u_c<u<u_i$, the roots are $r(u,x),s(u,x),1/r(u,x)$ and $1/s(u,x)$ with $r(u,x)<1<s(u,x)$ and $r(u,x), s(u,x) \in \R$
\item For $u=u_i$, the roots are $r(u_i,x)$ (twice) and $1/r(u_i,x)$ (twice) where $r(u_i,x)=(1-x)/(1+x)<1$.
\item For $u>u_i$ ($u<1/x$), the roots are $r, \overline{r(u,x)}, 1/r(u,x)$ and $1/\overline{r(u,x)}$ for   $r(u,x) \in \mathbb{C}$ with $|r|<1$.
\end{itemize}

Finally, we consider the behavior of the roots defined in (\ref{ap:root1}) and (\ref{ap:root2}) when $x$ is small.  We have that for $u<1/2$ (not a function of $x$)
\( \label{app:rootruless}
	r(u,x)=\frac{1-2u^2+\sqrt{1-4u^2}}{2u^2} - \frac{1-2u^2+\sqrt{1-4u^2}}{u^2} x +O(x^2)
\) 
and
\( \label{app:rootsuless}
	s(u,x)=\frac{1-2u^2+\sqrt{1-4u^2}}{2u^2} + \frac{1-2u^2+\sqrt{1-4u^2}}{u^2} x +O(x^2).
\)

When $u=u_\g=(1-\sqrt{1-\g x^2})/(\g x^2)$ for $\g \not=0$ and $u_0=1/2$, we have that
\( \label{app:rootrug}
	r(u_\g,x)= 1 -(2+\sqrt{2-2\g} )x+(3+2\sqrt{2-2 \g}-\g) x^2+O(x^3)
\)
and
\( \label{app:rootsug}
	s(u_\g,x)=1 +(2+\sqrt{2-2\g} )x+(3-2\sqrt{2-2 \g}-\g) x^2+O(x^3).
\)

When $u>1/2$ and not a function of $x$, we have
\( \label{app:rootrubig}
	r(u,x)=\frac{1-2u^2 +i \sqrt{4u^2-1}}{2u^2} - \frac{1-2u^2+i \sqrt{4u^2-1}}{u^2} x +O(x^2)
\) 
and
\( \label{app:rootsubig}
	s(u,x)=\frac{1-2u^2 + i \sqrt{4u^2-1}}{2u^2} + \frac{1-2u^2+i \sqrt{4u^2-1}}{u^2} x +O(x^2).
\)
which means that $| |r(u,x)|-1|$ is $\Theta(x)$.  

Finally, we make the following remarks based on the above expansions:  if we choose $u<1/2$ independently of $x$, then we have $ \lim_{x \to 0} r(u,x)-s(u,x)=0$ with $\lim s(u,x)>1$.     However, for $u \geq u_c$, we have that  $\lim_{x \to 0} |r(u,x)| =\lim_{x \to 0} |s(u,x)|=1$.

\section{Cofactor Matrix of $K(z,w)$} \label{app:cofactor}

Let $C(z,w)_{i,j}$ be the $(i,j)^{th}$ entry of the cofactor matrix of $K(z,w)$ defined in (\ref{Kasteleyn}).  The complete cofactor matrix is a rather large expression; we will only list a few important entries of the cofactor matrix that are required in this paper.  Due to $K(z,w)$ is anti-symmetric, we have $C(z,w)_{i,j}=\overline{C(z,w)_{j,i}}$. We have
\( \nonumber
	C(z,w)_{1,1}=u x^2 ( w-w^{-1}),
\)
\( \nonumber
	C(z,w)_{1,2}=x^2( u^2 x^2 z w^{-1}-1-uw^{-1} -u z),
\)
\( \nonumber
	C(z,w)_{1,5}=x(1 + u w  + u  z - u^2 w x^2 z),
\)
\( \nonumber
	C(z,w)_{1,6}= x(1- u^2 x^2 + u w(1  - x^2) + u  z(1+u^2 x^4) - u^2 x^2 z w^{-1} - u^2 w x^2 z ),
\)
\( \nonumber
	C(z,w)_{2,2}=u x^2 (z^{-1} - z),
\)
\( \nonumber
	C(z,w)_{2,5}=x(1  - u^2 x^2+ uw(1+u^2 x^4)  +u z(1- x^2)  - u^2 w x^2 z^{-1} + u  z  - 
 u^2 w x^2 z),
\)
\( \nonumber
	C(z,w)_{2,6}=x(1+ u w  + u  z - u^2 w x^2 z),
\)
\( \nonumber
	C(z,w)_{3,3}= u^2 x^2(    z w^{-1}-w  z^{-1}),
\)
\( \nonumber
	C(z,w)_{3,4}=x(1+u^4 x^4 + u w(1 - u^2  x^2) +  u  z(1- u^2 x^2) - u^2 w x^2 z^{-1}  - u^2 x^2 z w^{-1}),
\)
\( \nonumber
	C(z,w)_{4,4}=u^2 x^2 (w  z^{-1} -  z w^{-1}),
\)
\( \nonumber
	C(z,w)_{5,5}=u x^2( z-z^{-1}),
\)
\( \nonumber
	C(z,w)_{5,6} =x^2(1 + u  w^{-1} + u z - u^2 x^2 z w^{-1}),
\)
and 
\( \nonumber
	C(z,w)_{6,6}=u x^2( w^{-1} -  w )
\)
where $a=x$ and $b=u x$.  

\section{Computations of Local Probabilities} \label{app:localprobs}

In this section, we provide some of the explicit computations used to compute the exact and approximations of the local probabilities of this model.  We also provide the proofs of Lemma~\ref{expect:lem:expand} and Lemma~\ref{exp:lem:sing}.  Due to the technical nature of these computation, we used the aid of computer algebra.

We have that
\begin{lemma} \label{app:therm:exp}
\begin{align} \label{app:exp:therm1}
	-b\frac{\partial}{\partial b}\log Z =  -\frac{2u^2 x^2}{1-u^2 x^2} +	\left\{ \begin{array}{ll} 
					f(x,u,r) & \mbox{if } u<u_c \\
					f(x,u,1) &  \mbox{if } u=u_c \\
					f(x,u,1/r) &   \mbox{if } u>u_c\\ \end{array}\right.
\end{align}
where $f(x,u,r)$ is defined in Theorem \ref{expect:thm:expectb} and $b=u x$.  In the case when $u=u_i$, we obtain
\begin{equation}
    -b\frac{\partial}{\partial b}\log Z=1-\sqrt{\frac{1-x}{1+x}}
\end{equation}

\end{lemma} 

\begin{proof}
From~\eqref{setup:logZ}, we can differentiate both sides with respect to $b$ and we obtain
\begin{equation*}
      -b\frac{\partial}{\partial b}\log Z = -\frac{b}{2(2\pi i)^2} \int_{|w|=1} \int_{|z|=1} \frac{ \der{b} P(z,w)}{P(z,w)} \frac{dz}{z} \, \frac{dw}{w}
\end{equation*}
by differentiation under the integral sign.  Under the change of variables $z \mapsto w v$, we have 
\begin{equation} \label{appB:expectedbchangeofvariables}
\begin{split}
      &-b\frac{\partial}{\partial b}\log Z = \frac{1}{2(2\pi i )^2} \int_{|v|=1}  \int_{|w|=1} dw \, dv \\
      &\frac{u (-(1 + v) (1 + 2 u (1 + v) w + v w^2) + u (2 (1 + v^2) w + 3 u (1 + v) (1 + v w^2)) x^2 - 4 u^3 v w x^4)}{v w (-(u + u v + v w) (1 + u w + u v w) + 
   u^2 (w + v^2 w + u (1 + v) (1 + v w^2)) x^2 - u^4 v w x^4)}.
\end{split}
\end{equation}
 The roots of the denominator of the integrand of~\eqref{appB:expectedbchangeofvariables} with respect to $w$ are given by 0, $Y_{\pm}(v)$ which are both defined in (\ref{app:eqn:Ypm}).  The residue at $w=0$ for the right hand side of~\eqref{appB:expectedbchangeofvariables} gives
\begin{equation}
	\frac{1}{2}\frac{1}{2 \pi i} \int_{|v|=1} \frac{-1 + 3 u^2 x^2}{ v  (-1 + u^2 x^2)} dv=  \frac{1 - 3 u^2 x^2}{2(1 - u^2 x^2)}. \label{expect:thm:wres0}
\end{equation}
As we have $|Y_+(v)|<1$ for all $|v|=1$ and $|Y_-(v)|>1$ for all $|v|=1$, we can compute the residue with respect to $w$ at $Y_+(v)$ for the right hand side of~\eqref{appB:expectedbchangeofvariables}.  This gives 
\begin{equation} \label{expect:thm:w1res}
	\frac{1}{2 \pi i}\int_{|v|=1}  \frac{Q(v)}{2 v (-1+u^2 x^2) A(v)} dv 
\end{equation}
where 
\begin{equation*}
	Q(v)=(1 + u^2 x^2) (v + u^4 v x^4 + u^2 (-1 + x^2 + v^2 (-1 + x^2) - 2 v (1 + 2 x^2))).
\end{equation*}

Recall that the roots of $A(v)$ are given $r(u,x),s(u,x),1/r(u,x),1/s(u,x)$, where $r(u,x)$ and $s(u,x)$ are defined in (\ref{ap:root1}) and (\ref{ap:root2}).  These roots were analyzed in Appendix \ref{app:sec:factorization}. For all values of $u \in (0,1)$, we have $|s(u,x)|>1$.  For $u<u_c$, we have $|r(u,x)|>1$.  For $u=u_c$, we have $r(u_c,x)=1$ and for $u>u_c$, we have $|r(u,x)|<1$.  We will consider~\eqref{expect:thm:w1res} under the three regimes $u<u_c$, $u=u_c$, $u>u_c$ and the special case when $u=u_i$ separately.   We denote $\mathcal{C}(\tilde{r},\tilde{s})$ to be a positively oriented contour surrounding the points $\tilde{r}(u,x)$ and $\tilde{s}(u,x)$ which does not contain the origin where $\tilde{r}$ and $\tilde{s}$ are functions of $u$ and $x$.

For $u<u_c$, the contour of integration for the integral in (\ref{expect:thm:w1res}) can be deformed to a contour surrounding the branch cut from $1/r(u,x)$ to $1/s(u,x)$ and a contour around $v=0$.  We can take the residue at $v=0$ and we find that~\eqref{expect:thm:w1res} is equal to 
\begin{equation}
\label{expect:thm:w1resuless}
	-\frac{1+u^2 x^2}{2(1-u^2 x^2)} +  \frac{1}{2 \pi i}\int_{\mathcal{C}(1/r,1/s)}  \frac{Q(v)}{2 v (-1+u^2 x^2) A(v)} dv. 
\end{equation}
The integral in the above equation can be rewritten as two line integrals.  We have that~\eqref{expect:thm:w1resuless} becomes 
\begin{equation}\label{expect:thm:w1resuless1}
      -\frac{1+u^2 x^2}{2(1-u^2 x^2)} +\frac{1}{2 \pi i}\int_{1/r}^{1/s}  \frac{Q(v)}{ v (-1+u^2 x^2) A(v)} dv. 
\end{equation}
Adding~\eqref{expect:thm:wres0} to~\eqref{expect:thm:w1resuless1} we find that for $u<u_c$
\begin{equation*}
-b \der{b} \log Z=-\frac{2u^2 x^2}{1-u^2 x^2} -\frac{1}{2 \pi i}\int_{1/s}^{1/r}  \frac{Q(v)}{ v (-1+u^2 x^2) A(v)} dv.
\end{equation*}
As $Q(v)$ is a polynomial in $v$ of degree 2, we have that the above line integral is a linear combination of  elliptic integrals.  We can simplify this linear combination of  elliptic integrals to obtain the expression found in~\eqref{app:exp:therm1} for $u<u_c$.

For $u=u_c$, the contour of integration for the integral in (\ref{expect:thm:w1res}) can be deformed to a contour surrounding the branch cut from $1/s(u_c,x)=(1-x)^2/(1+x)^2$ to $1$ and another contour around $v=0$.  We also have that $A(v)$ is given by 
\begin{equation} \label{expect:thm:u=ucAv}
	u_c^2(1-x^2)(v-1)\sqrt{(v-s(u_c,x))(v-1/s(u_c,x))}.
\end{equation}
Using~\eqref{expect:thm:u=ucAv}, the above contour deformation and computing the residue at $v=0$,~\eqref{expect:thm:w1res} becomes
\begin{equation*}
      -\frac{1+u^2 x^2}{2(1-u^2 x^2)} +  \frac{1}{2 \pi i}\int_{\mathcal{C}(1,1/s(u_c,x))}  \frac{Q(v)}{2 v u_c^2(1-x^2)(v-1)\sqrt{(v-s(u_c,x))(v-1/s(u_c,x))}} dv. 
\end{equation*}
The above integral can be reduced to two line integrals and we obtain
\begin{equation} \label{expect:thm:w1resu=uc}
      -\frac{1+u^2 x^2}{2(1-u^2 x^2)} +  \frac{1}{2 \pi i}\int_1^{1/s(u_c,x)}  \frac{Q(v)}{ v u_c^2(1-x^2)(v-1)\sqrt{(v-s(u_c,x))(v-1/s(u_c,x))}} dv. 
\end{equation}
Adding~\eqref{expect:thm:wres0} to~\eqref{expect:thm:w1resu=uc} we find that for $u=u_c$
\begin{equation*}
-b \der{b} \log Z=-\frac{2u^2 x^2}{1-u^2 x^2} + \frac{1}{2 \pi i}\int_1^{1/s(u_c,x)}  \frac{Q(v)}{ v u_c^2(1-x^2)(v-1)\sqrt{(v-s(u_c,x))(v-1/s(u_c,x))}} dv.
\end{equation*}
The line integral in the above equation can be evaluated explicitly by expanding $Q(v)/( v (v-1))$ in terms of its partial fractions decomposition which gives a linear combination of standard integrals.  Evaluating  gives the expression found in~\eqref{app:exp:therm1} for $u=u_c$.

For $u>u_c$, the contour of integration for the integral in (\ref{expect:thm:w1res}) can be deformed to a contour surrounding the branch cut from $r(u,x)$ to $1/s(u,x)$ and a contour around $v=0$.  We can take the residue at $v=0$ and we find that~\eqref{expect:thm:w1res} is equal to 
\begin{equation}
\label{expect:thm:w1resubig}
	-\frac{1+u^2 x^2}{2(1-u^2 x^2)} +  \frac{1}{2 \pi i}\int_{\mathcal{C}(r,1/s)}  \frac{Q(v)}{2 v (-1+u^2 x^2) A(v)} dv. 
\end{equation}
The integral in the above equation can be rewritten as two line integrals.  We have that~\eqref{expect:thm:w1resubig} becomes 
\begin{equation}\label{expect:thm:w1resubig1}
      -\frac{1+u^2 x^2}{2(1-u^2 x^2)} +\frac{1}{2 \pi i}\int_{r}^{1/s}  \frac{Q(v)}{ v (-1+u^2 x^2) A(v)} dv. 
\end{equation}
Adding~\eqref{expect:thm:wres0} to~\eqref{expect:thm:w1resubig1} we find that for $u>u_c$
\begin{equation*}
-b \der{b} \log Z=-\frac{2u^2 x^2}{1-u^2 x^2} -\frac{1}{2 \pi i}\int_{1/s}^{r}  \frac{Q(v)}{ v (-1+u^2 x^2) A(v)} dv.
\end{equation*}
The line integral in the above equation can be evaluated giving the elliptic integral found in~\eqref{app:exp:therm1} for $u>u_c$
      
Finally, when $u=u_i$, we could either simplify the expression for $\E[N_b]$ given in Theorem~\ref{expect:thm:expectb} for $u>u_c$ because $k=0$ for $u=u_i$ or we could use residue calculus.  We choose the latter.  By setting $u=u_i$ and simplifying, we find that~\eqref{expect:thm:w1res} is equal to 
\begin{equation} \label{expect:thm:wres1ui}
      \frac{1}{2\pi i}\int_{|v|=1} \frac{(-1 + v)^2 - (1 + (-6 + v) v) x^2}{2 v (1 + v (-1 + x) + x) (-1 + v + x + v x) \sqrt{1 - x^2}} dv=-\frac{1-2 x}{\sqrt{1-x^2}}
\end{equation}
by computing the residues at $v=0$ and $v=(1-x)/(1+x)$.  Adding~\eqref{expect:thm:wres0} to~\eqref{expect:thm:wres1ui} and setting $u=u_i$ gives the result.

\end{proof}

We now prove Lemma~\ref{expect:lem:expand}.

\begin{proof}[Proof of Lemma~\ref{expect:lem:expand}]
For $u<u_c$ (fixed), we have that $\E[N_b]$ is of the form 
\begin{equation}
      -\frac{2u^2 x^2}{1-u^2 x^2}+c_1(u,x) \mathcal{K}(k)+ c_2(u,x)(\Pi(rk,k)-\Pi(k/r,k)).
\end{equation}
by Theorem~\ref{expect:thm:expectb}.  We can use the expansions of $r(u,x)$ and $s(u,x)$ given in~\eqref{app:rootruless} and~\eqref{app:rootsuless} to find that
\begin{equation} \label{app:expectexpandcoeffuless1}
    c_1(u,x)=-\frac{2}{\pi}x+ \frac{2}{\pi \sqrt{1-4u^2}}x^2+O(x^3) 
\end{equation}
and
\begin{equation}\label{app:expectexpandcoeffuless2}
    c_2(u,x)=\frac{1}{\pi}+ \frac{2(2u^2-1)}{\pi\sqrt{1-4u^2}}x +O(x^2).
\end{equation}
Using the expansions of $r(u,x)$ and $s(u,x)$ given in~\eqref{app:rootruless} and~\eqref{app:rootsuless}, we can find the expansion of the modulus of the elliptic integrals.  This is given by
\begin{equation}
k= \frac{s-r}{rs-1}=\frac{4u^2 }{\sqrt{1-4u^2}} x+O(x^3).
\end{equation}
The expansions of the elliptic integrals are given by
\( \label{expect:lem:expandlemproof1}
	\mathcal{K}(k)= \frac{\pi}{2} + \frac{k^2 \pi}{4} +O(k^4)
\)
and 
\( \label{expect:lem:expandlemproof2}
	\Pi(rk,k)-\Pi(k/r,k)= \frac{k \pi (-1+r^2)}{4 r} +\frac{3 k^2 \pi (-1+r^4)}{16 r^2} + O(k^3).
\)
We can substitute the series expansions of $k$ and $r$ into~\eqref{expect:lem:expandlemproof1} and~\eqref{expect:lem:expandlemproof2} to give series expansions in terms of $x$ which are given by
\begin{equation}
\mathcal{K}(k)=\frac{\pi}{2} + \frac{4 u^4 \pi }{1-4u^2} x^2 +O(x^4)
\end{equation}
and 
\begin{equation}
      \Pi(rk,k)-\Pi(k/r,k)= \pi x+ \frac{1-2u^2}{\sqrt{1-4u^2}} \pi x^2 +O(x^3).
\end{equation}
These series expansions in $x$ can be substituted into the expression for $\E[N_b]$ given in Theorem~\ref{expect:thm:expectb} along with the expansions of the coefficients $c_1(u,x)$ and $c_2(u,x)$ computed in~\eqref{app:expectexpandcoeffuless1} and~\eqref{app:expectexpandcoeffuless2} to find the series expansion of $\E[N_b]$ for $u<u_c$ (fixed). 

For $u=u_c$ and $u=u_i$,  we have explicit expressions for $\E[N_b]$ which do not involve elliptic integrals given in Theorem ~\ref{expect:thm:expectb}.  In both cases, we can take a series expansions in $x$.

For $u>u_i$ (fixed), we also have that $\E[N_b]$ is of the form 
\begin{equation}
      -\frac{2u^2 x^2}{1-u^2 x^2}+c_1(u,x) \mathcal{K}(k)+ c_2(u,x)(\Pi(k/r,k)-\Pi(r k,k)).
\end{equation}
by Theorem~\ref{expect:thm:expectb}.  Using the expansions of $r(u,x)$ and $s(u,x)$ given in~\eqref{app:rootrubig} and~\eqref{app:rootsubig}, we have
\begin{equation} \label{app:expectexpandcoeffubig1}
      c_1(u,x)= - i\frac{\sqrt{4u^2-1}}{2\pi u^2} +O(x)
\end{equation}
and 
\begin{equation} \label{app:expectexpandcoeffubig2}
      c_2(u,x)= - i \frac{\sqrt{4u^2-1}}{4 \pi u^2 x} +O(x^0).
\end{equation}
 The modulus of each elliptic integral is given by $k=i/k'$ where $k'=|s/r -1|/|s-1/r|$ and $i=\sqrt{-1}$.  Using the expansions of $r(u,x)$ and $s(u,x)$ given in~\eqref{app:rootrubig} and~\eqref{app:rootsubig}, we have
\begin{equation}
    k'=\frac{4u^2}{\sqrt{4u^2-1}}x+O(x^2)
\end{equation}
The expansions of the elliptic integrals with modulus equal to $k=i/k'$ are given by
\( \label{expect:lem:expand2lemproof1}
	\mathcal{K}\left(\frac{i}{k'} \right)=k'\log \left( \frac{2}{k'} \right) +  k' \log 2+O(k'^2)
\)
and
\( \label{expect:lem:expand2lemproof2}
	\Pi\left(\frac{i}{r k'}, \frac{i}{k'} \right)-\Pi\left(\frac{i r}{k'}, \frac{i}{k'} \right)=-k' \log \frac{1}{r} +O(k'^2).
\)
These expansions are both computed explicitly in Chapter 3 of \cite{sc:11}.  We can rewrite the expansions given in~\eqref{expect:lem:expand2lemproof1} and~\eqref{expect:lem:expand2lemproof2} in terms of $u$ and $x$. We obtain
\(
    \mathcal{K}\left(\frac{i}{k'} \right)=i\frac{4u^2}{\sqrt{4u^2-1}} \left( \frac{1}{2} \log (4u^2-1) -2\log u -\log x \right) x +O(x^2)
\)
and
\begin{equation}
 \begin{split}
    \Pi\left(\frac{i}{r k'}, \frac{i}{k'} \right)-\Pi\left(\frac{i r}{k'}, \frac{i}{k'} \right)&= -i\frac{4u^2}{\sqrt{4u^2-1}} \log \left( \frac{1-2u^2 -i\sqrt{4u^2-1}}{2u^2} \right) x +O(x^2)\\ 
    &= -i\frac{4u^2}{\sqrt{4u^2-1}}  \arccos\left(\frac{1}{2u^2}-1 \right)x +O(x^2)\\
    &=-i\frac{8u^2}{\sqrt{4u^2-1}}  (\theta- \pi)x +O(x^2)
\end{split}
\end{equation}
where $\theta = \arccos(-1/(2u))$.  We can substitute these expansions of the elliptic integral and the expansions of the coefficients $c_1(u,x)$ and $c_2(u,x)$ computed in~\eqref{app:expectexpandcoeffubig1} and~\eqref{app:expectexpandcoeffubig2}  back into the expression given in Theorem~\ref{expect:thm:expectb}.  Extracting the highest order coefficient of $x$ (i.e. the $x^0$ term) gives the result. 

The last statement follows by finding the expansions  $\E [N_{a^c}]$ which can be achieved using the same method.

\end{proof}

We now give the proof of Lemma~\ref{exp:lem:sing}.

\begin{proof}
For $u=u_i$ or $u=u_c$, the same method used in Lemma  \ref{expect:lem:expand} holds.  Otherwise, notice that
$\E [N_X]$ is of the form 
\(\nonumber
	c_1(x,u) K(k)+c_2(x,u) (\Pi(r/k,k)-\Pi(1/(r k),k) +O(x^2)
\)
where $c_1(x,u), c_2(x,u)$ represent coefficients of the complete elliptic integrals and $k$ represents the appropriate modulus of the elliptic integrals, given in Theorem \ref{expect:thm:expectb}.   For $u<u_c$, $c_1$ and $c_2$ are both of order $x^2$, hence the expansions of the elliptic integrals given in the proof of Lemma \ref{expect:lem:expand} hold and we can follow the approach used in the proof of Lemma~\ref{expect:lem:expand} to obtain the result.  For $u>u_i$, $c_1(x,u)$ and $c_2(x,u)$ are both $\Theta(x)$, which implies that we can use the expansions of the elliptic integrals used in the proof of Lemma~\ref{expect:lem:expand} and follow the computation in the proof of Lemma~\ref{expect:lem:expand}.
\end{proof}

\subsection{For $u<1/2$}  \label{app:uless:examples}
 When $u<1/2$ (independent of $x$), the roots $r(u,x)$ and $s(u,x)$ defined in (\ref{ap:root1}) and (\ref{ap:root2}) are not close to the unit circle when $x$ is small.  In this case, we can use a series expansion in $x$ to find an expansion in $x$ of the inverse Kasteleyn matrix.  For example,
 \begin{lemma}
 \( \nonumber
 K^{-1}(v_3,v_4)=-\frac{1}{x}- \frac{2u^2-1+\sqrt{1-4u^2}}{\sqrt{1-4u^2}} x+O(x^3)
 \)
 \end{lemma}

\begin{proof}
We have that
\begin{equation} \label{app:uless:example1}
\begin{split}
	&K^{-1}(v_3,v_4)\\
	&= \frac{1}{(2\pi i)^2} \int_{\mathbb{T}^2} \left( \frac{-1}{x(1+uw+uz)} + \frac{(u^4 w^2 + u^3 w^3 + 2 u^4 w z + u^4 z^2 + u^3 z^3) x}{(1 + u w + u z)^2 (u w + u z + w z)}\right) \frac{dz}{z}\frac{ dw}{w} +O(x^3)
	\end{split}
\end{equation}
The first term in equation (\ref{app:uless:example1}) gives $-1/x$ when applying the residue theorem.   Computing the residue at $z=- uw/(u+w)$, the second term in (\ref{app:uless:example1}) becomes
\(
	\frac{1}{2\pi i } \int_{|w|=1} -\frac{ u^2 w^2 (4u^2+4uw+w^2)x}{(u+w)(u+w+uw^2)^2}  dw.
\)
Applying the residue theorem at $w=-u$ and $w=(-1+\sqrt{1-4u^2})/(2u)$ gives the result.

\end{proof}


\subsection{For $u=u_c$}

In this subsection, we give an example of computing an inverse Kasteleyn entry for $u=u_c$ and a list of some of the inverse Kasteleyn entries we can also compute using this technique.

\begin{lemma} \label{app:uc:exactcomputation}
For $u=u_c$, we have
\(
	K^{-1} (v_3,v_4)= \frac{ \pi x^2 -2 (1 + x^2)  \mathrm{arccotan}(x)}{2 \pi x (1 - x^2)}
\)
\end{lemma}

The computation is slightly different to the one used in Lemma~\ref{app:therm:exp} for $u=u_c$ -- we do not take the initial change of variables because for these entries  it makes the computation more involved.  

\begin{proof}
From using the cofactor expansion given in Appendix~\ref{app:cofactor}, we can write the numerator of the integrand of the integral we want to compute as
\begin{equation}
  -\frac{ u_c^2 x(w^2 x^2 + z (-2 + x^2 z) - 2 w (1 + (2 + x^2) z))}{ z w}
\end{equation}
and the denominator is given by
\begin{equation}
    z w P(z,w)= u_c^2 x^2 (w^2 (-1 + x^2 - 2 z) + z (-2 + (-1 + x^2) z) - 
  2 w (1 + (3 + x^2) z + z^2))
\end{equation}
Therefore, we can write $K^{-1}(v_3,v_4)$ as
\( \label{app:uc:k34}
	\frac{1}{(2\pi i)^2} \int_{\mathbb{T}^2} \frac{ -w^2 x^2 + z (2 - x^2 z) + 2 w (1 + (2 + x^2) z)}{w x z (w^2 (-1 + x^2 - 2 z) + z (-2 + (-1 + x^2) z) - 
   2 w (1 + (3 + x^2) z + z^2))} \frac{dz}{z}\frac{dw}{w}.
\)
The denominator of the above integrand can be rewritten as $ xz w (1+2w-x^2)(z-y_+(w))(z-y_-(w))$ where $y_{\pm}(w)$ are both defined in (\ref{hpart:uc:ypm}),  with $|y_-(w)|<1$, $|y_+(w)|>1$ for all $|z|=1$.  We have that 
\(
	\sqrt{
 1 + 4 w + 6 w^2 + 4 w^3 + w^4 + 4 w x^2 + 8 w^2 x^2 + 4 w^3 x^2}=(w+1)\sqrt{w-s(x))(w-s(x)^{-1})}
\)
where $s(x)= -1 - 2 x^2 + 2x \sqrt{1 + x^2}$.  

Computing the residue at $z=0$ of (\ref{app:uc:k34}) gives
\( \label{app:uc:k34residueatz0}
	\frac{1}{2\pi i} \int_{|w|=1}  \frac{2 - w x^2}{ w x (-2 - w + w x^2)}dw=-\frac{1}{x}.
\)
The residue at $z=y_-(w)$ gives
\begin{align}
	&\frac{1}{2\pi i} \int_{|w|=1} \frac{-1 - 2 w + w^2 x^2}{w x (1 + 2 w - x^2) (-2 + w (-1 + x^2))} \nonumber \\
	&-\frac{ (1 + w^3 x^2 + w (3 + 2 x^2) + w^2 (2 + x^2 - 2 x^4))}{ w  x (1+ 2w -x^2)(-2 + w (-1 + x^2)) \sqrt{(1 + w^2 + w (2 + 4 x^2))}} dw \label{app:uc:bigequation}
\end{align}
The above integrand has singularities at $w=0$, $w=-1/2(1-x^2)$ and $w \in [-1,s(x)]$.   The residues at $w=0$ and $w=-1/2(1-x^2)$ for the integral in (\ref{app:uc:bigequation}) give
\(\label{app:uc:residueeasy}
	\frac{1}{x(1-x^2)} -\frac{x}{1-x^2}=\frac{1}{x}.
\)
Adding the quantities from (\ref{app:uc:residueeasy}) and (\ref{app:uc:k34residueatz0}), we can rewrite $K^{-1}(v_3, v_4)$ as a line integral, namely
\( \label{app:uc:k34lineintegral}
	K^{-1} (v_3,v_4)= \frac{2}{2\pi i} \int_{-1}^{s(x)} \frac{ (1 + w^3 x^2 + w (3 + 2 x^2) + w^2 (2 + x^2 - 2 x^4))}{ w  x (1+ 2w -x^2)(-2 + w (-1 + x^2)) \sqrt{(1 + w^2 + w (2 + 4 x^2))}} dw.  
\)
In the above expression, we can write
\begin{equation*}
 \frac{ (1 + w^3 x^2 + w (3 + 2 x^2) + w^2 (2 + x^2 - 2 x^4))}{ w  x (1+ 2w -x^2)(-2 + w (-1 + x^2))}
\end{equation*}
in terms of partial fractions which means that~\eqref{app:uc:k34lineintegral} is a linear combination of standard integrals.  Each of these integrals can be computed and summing up gives the result. 
\end{proof}

\begin{lemma} \label{app:uc:exactcomputation2}
For $u=u_c$, we have
\begin{align}
	K^{-1} (v_4,v_5)&=-K^{-1}(v_4,v_6) \nonumber \\
	&= -\frac {\pi  \left(-1+x^2\right) \left(1+\sqrt{1+x^2}\right)+\sqrt{1+x^2} \left(3-3 x^2+\sqrt{1+x^2}\right) \arccos x }{4 \pi  \left(-1+x^2\right)} \nonumber \\
	&+ \frac{\sqrt{1+x^2} \left(-1+x^2+\sqrt{1+x^2}\right) \arctan \left(\frac{1-3 x^2}{3 x-x^3}\right)}{4 \pi  \left(-1+x^2\right)}, \nonumber \\
	&=-\frac{x}{\pi}+\frac{x^2}{2} +O(x^3) \nonumber
\end{align}
\begin{align} \nonumber
	K^{-1}(v_5,v_6)&= -\frac{(1+\sqrt{1+x^2}) (\pi (1-x^2)+\sqrt{1+x^2}(2-\sqrt{1+x^2}) (\pi -4 \arctan x ) }{4 \pi (1-x^2)}  \\
	&=-1+\frac{2 x}{\pi} -\frac{3x^2}{4} +O(x^3), \nonumber
\end{align}
\end{lemma}

\begin{proof}
	These values can be computed in a very similar fashion to Lemma \ref{app:uc:exactcomputation}.
\end{proof}

\section{Expansions and Computations of $H_u$ } \label{app:H}

In this section, we prove Lemma~\ref{uless:lem:H}, Lemma~\ref{hp:uc1lemma} and Lemma~\ref{cor:lemmacritical2}.

\subsection{$u<1/2$ (chosen independently of $x$) } \label{app:c:uless}

In this subsection, we prove Lemma \ref{uless:lem:H}.

\begin{proof}[Proof of Lemma \ref{uless:lem:H}]

We can use the analysis of the roots in Appendix \ref{app:sec:factorization} to show that $P(z,w)$ is analytic for $u<1/2$ for $(z,w)\in \mathbb{T}^2$ with $x \to 0$.  By letting 
\( \label{uless:lemproof:ptilde}
	\tilde{P}(z,w)= \lim_{x \rightarrow 0}\frac{1}{x^2} z w \det K(z,w)=(1+ z u +w u)(w z + u z + u w),
\)  
we can take a series expansion in terms of $x$ to find that
\(
	H_u(m,n)= \frac{x^{-2}}{(2\pi i)^2}\int_{|w|=1} \int_{|z|=1}\frac{z^m w^n}{\tilde{P}(z,w)}dz\, dw +O(x^0).
\)
by the Dominated Convergence Theorem and so $H_u^0(m,n)=\lim_{x\to 0 } x^2 H_u(m,n)$ for $m,n \in \Z$ exists. 

As $|u w|/|u+w|<1$ for all $|w|=1$, using the factorization of $\tilde{P}(z,w)$ in (\ref{uless:lemproof:ptilde}), we can compute the residue at $z=uw/(u+w)$ which gives
\(
	H_u^0(m,n)= \frac{1}{2\pi i} \int_{|w|=1} \left(-\frac{ u w}{u+w} \right)^n\frac{w^m}{u+w+u w^2} dw . \label{uless:green's}
\)
By partial fractions, we have
\( \label{uless:partialfractions}
	\frac{1}{u+w+u w^2}= \frac{1}{u(r_1-r_2)}\left( \frac{1}{w-r_1} -\frac{1}{w-r_2} \right),
\)
where $r_1$ and $r_2$ are the roots of the polynomial $u+w+uw^2$ with $|r_1|<1$ and $|r_2|>1$.  As $|r_1|<1$, the contribution for (\ref{uless:green's}) obtained by deforming the contour of integration to $|w-r_1|=\e$ is given by
\( \label{uless:lem:Hprooffirstcontribtuion}
	\frac{(-u)^nr_1^{m+n}}{u(r_2-r_1)(r_1+u)^n}.
\)
The remaining contribution for (\ref{uless:green's}) comes from deforming the contour of integration to $|w+u|=\e$.  For $v \in \mathbb{C}$ with $v\not= u$,  using the residue theorem at $w=-u$ gives
\begin{align}
	\frac{1}{2\pi i } \int_{|w+u|=\e}  \left(-\frac{ u w}{u+w} \right)^n\frac{w^m}{w+v} dw &= (-u)^{m+n}(m+n)!\sum_{k=0}^{n-1} \frac{(-1)^{n-k-1} }{k!(m+n-k)!} \left(\frac{u}{u-v}\right)^{n-k}   \nonumber \\
	&=- (-1)^m\left(-\frac{u}{u-v} \right)^n v^{m+n}   \nonumber \\
	&-(-u)^{m+n} \frac{(m+n)!}{m! n!} {_2 F_1(1,-m,1+n, 1-v/u)}. \label{uless:interactionlemma1}
\end{align}
We can substitute $v=-r_1$ and $v=-r_2$ and use the partial fractions decomposition given in~\eqref{uless:partialfractions} which gives the contribution of~\eqref{uless:green's} when the contour of integration is deformed to $|w+u|=\e$. This gives
\begin{equation}  \label{uless:lem:Hproofsecondcontribtuion}
 \begin{split}
 &\frac{(-u)^{m+n} (m+n)!}{u(r_1-r_2)} \left( \frac{{_2 F_1(1,-m,1+n, s_1^{-1})}}{m! n!} - s_2 \sum_{k=0}^{n-1} \frac{(-s_2)^{n-1-k}}{k! (m+n-k)!} \right)- \frac{(-u)^nr_1^{m+n}}{u(r_2-r_1)(r_1+u)^n}  
 \end{split}
\end{equation}
where $s_1$ and $s_2$ are defined in the statement of the lemma.  Adding~\eqref{uless:lem:Hprooffirstcontribtuion} and~\eqref{uless:lem:Hproofsecondcontribtuion} gives $H_u^0(m,n)$. 
	
For the third statement of the lemma, without loss of generality, suppose that $n \geq m$. We have
\(
	H_u^0(-m,n)= \frac{1}{(2\pi i)^2} \int_{|w|=1} \int_{|z|=1} \frac{w^n}{z^m (1+z u+ w u)(uw+uz+wz)} dz \,dw.
\)
By a partial fractions decomposition, we have
\(
	\frac{1}{(1+z u+ w u)(uw+uz+wz)} = \frac{1}{u+w+u w^2} \left(\frac{u+w}{uw+uz+wz}-  \frac{u}{1+uw+uz} \right).
\)
Completing the residue calculations with respect to $z$ leads to the simplification
\begin{align}
	H_u^0(-m,n)&=\frac{1}{2\pi i} \int_{|w|=1} \frac{(-1)^{m} u^m w^n }{(u+w+u w^2)(1+u w)^m} dw   \\
	&=\frac{(-1)^m u^m r_1^{n}}{(1+u r_1)^m\sqrt{1-4u^2} } 
\end{align}
where $r_1$ is the root lying inside of the unit circle of the polynomial $u + w+uw^2$. Simplifying the above equation gives the result.
\end{proof}

\subsection{Local dynamics $u=u_c$} \label{app:uc:local}

In this subsection, we prove Lemma \ref{hp:uc1lemma}.

\begin{proof}[Proof of Lemma \ref{hp:uc1lemma}]

The denominator of the integrand is equal to $P(z,w) z w$ which can be rewritten as  $u_c^2 x^2 (1+2w-x^2) (z-y_+(w))(z-y_-(w))$ where $y_{\pm}(w)$ are both defined in (\ref{hpart:uc:ypm}). As  we have $|y_-(w)|<1$ and $|y_+(w)|>1$ for all $|z|=1$ ,   we can compute the first integral with respect to $z$ by taking the residue at $y_-(w)$.  This gives
\(
	H_{u_c}(m,n)=\frac{1}{2\pi i} \int_{|w|=1}\frac{ x^2 \left(-(-1)^{m-n}+w^{-n} \left(-\frac{1+w^2+w \left(3+x^2\right)-\sqrt{(1+w)^2 \left(1+2 w+w^2+4 w x^2\right)}}{1+2 w-x^2}\right)^m\right)}{2 \left(2+x^2-2 \sqrt{1+x^2}\right) \sqrt{(1+w)^2 \left(1+w^2+w \left(2+4 x^2\right)\right)}} dw. \label{par:crit1cont}
\)

Note that there is a branch cut situated along the negative real axis from $1-2x^2 - 2 x\sqrt{1+x^2}$ to $s(x)=-1-2 x^2 + 2 x\sqrt{1+x^2}$. Therefore, the integral over $|w|=1$ in (\ref{par:crit1cont}) can be deformed to an integral from $w=-1$ to $w=s(x)$ (above the branch cut), an integral from $w=s(x)$ to $w= -1$ (below the branch cut)  and a contour integral around $w=\half(x^2-1)$. 

The contributions given near the branch cut can be computed  as follows.  By taking a series expansion of the numerator about the point $w=-1$ gives an integral of the following form

\(\label{par:crit1cont1}
	\frac{1}{2\pi i} \int_{-1}^{s(x)} \frac{x^2}{2(2+x^2-2 \sqrt{1+x^2})} \sum_{k=0}^\infty a_k \frac{(w+1)^{k+1}}{\sqrt{(w+1)^2(1+w^2+w(2+4 x^2))}} dw   
\)
where $a_k$ is a function of $m,n$ and $x$ which can be computed using the integrand given in~\eqref{par:crit1cont}.  For example, we find that
\begin{equation}
      a_0=\frac{(-1)^{m - n} x^2 (m + n - m x^2 + n x^2)}{2 (1 + x^2) (2 + x^2 - 2 \sqrt{1 + x^2})}.
\end{equation}
For $i \geq 1$, we define the integrals 
\(
	I_i :=\frac{1}{2\pi i} \int_{-1}^{s(x)} \frac{(w+1)^{i-1}}{\sqrt{(w+1)^2 +4 w x^2}} dw
\)
which are  standard integrals and can be evaluated explicitly.  To find a series expansion of~\eqref{par:crit1cont1}, it is enough to compute $I_i$ for $i=2$ up to $i=5$ because $I_i=O(x^{i-1})$, each $a_k$ is $O(x^{-2})$ and there is no $I_1$ in the expression~\eqref{par:crit1cont1}.  By expanding each $I_i$ and $a_k$ in terms of $x$, we can find a series expansion for~\eqref{par:crit1cont1} in $x$ which is given by  
$$-F_0 (m,n)x^{-2}+F_1(m,n)x^{-1}-F_2(m,n)+F_3(m,n) x.$$

It remains to calculate the other contribution in (\ref{par:crit1cont}), namely the contour integral around $w=\half(x^2-1)$.  Taking a series expansion in $x$ gives
\(  \label{par:lastcontribution}
	\frac{1}{2\pi i}\int_{\left| w+\half \right|=\e} \frac{2 (-1)^m w^{m-n} }{x^2(1+2 w)^m(1+w)^2}-\frac{2 \left((-1)^m w^{1+m-n} \left(1+2 w+m (1+w)^2\right)\right)}{(1+2 w)^{m+1}(1+w)^4} dw +O(x^2).
\)
This can be computed using the following integral
\(
	G(l)=:\frac{1}{2\pi i}\int_{\left| w+\half \right|=\e}\frac{w^{m-n}}{(1+2 w)^m (1+w)^l} dw.
\)
These can all be found by a higher dimensional residue formula and are given by
 $G(1)=(-1)^n$, $G(2)=(-1)^{n}(m+n)$
\(G(3)=\half (-1)^n (-2 + m^2 + n + n^2 + m (3 + 2 n))\) and
\(G(4)=\frac{1}{6} (-1)^n (2 + m + n) (-6 + m^2 + n + n^2 + m (7 + 2 n)).\)  
This means that we can compute~\eqref{par:lastcontribution} explicitly and hence we have computed all the terms which account for~\eqref{par:crit1cont}.
\end{proof}

\subsection{Local dynamics for $u=u_i$} \label{app:ui:local}


\begin{proof}[Proof of Lemma \ref{cor:lemmacritical2}]
We only provide the calculation for $m \geq n$ as the other case is analogous.   To simplify calculations, set $x$ to $\sqrt{2 t-1}/t$ as this removes the square root term. Using this simplification, a calculation gives
\begin{equation}
 P(z,w)= \frac{(  (2 + w + z)t-w-z) ( (w+ z+2 w z)t-w-z)}{t^2 ( 2 t-1) w z}
\end{equation}
We can compute the residue of $H_{u_i}(m,n)$ at $z=(w - t w)/(t + 2 t w-1)$ which gives
\(
 	H_{u_i}(m,-n)=\frac{1}{2\pi i} \int_{|w|=1} \frac{t(2t-1) w^{-n} (w-t w)^m}{2 (t+2t w-1)^{m}(t+2tw +tw^2-w^2-1)} dw.
\label{par:critlemma}
\)
The singularities of the integrand inside the region $|w|\leq 1$ are located at $w=(-t+\sqrt{2t-1})/(-1+t)$ and $w=(1-t)/(2 t)$.  The contribution from the residue at $w=(-t+\sqrt{2t-1})/(-1+t)$  is given by
\(
	\frac{1}{4} t \sqrt{2t-1} \left(\frac{t+\sqrt{2t-1}}{1-t} \right)^{m+n}.  \label{par:crit2contrib}
\)

It remains to find the residue for (\ref{par:critlemma}) at $w=(1-t)/(2 t)$.  This can be achieved by splitting the term $(-1+t+2t w-w^2+t w^2)=(t-1)(w-c_1 )(w-c_2)$, where $c_1=1/c_2$ and $|c_1|<1$.  Hence, the denominator in (\ref{par:critlemma}) can be separated into 
\(
	\frac{1}{(-1+t+2t w-w^2+t w^2)} = \frac{1}{2 \sqrt{2t-1}} \left( \frac{1}{w-c_1}-\frac{1}{w-c_2} \right).
\)
Each term can be computed using the high dimensional residue formula: 
\(
	\frac{1}{2 \pi i} \int_{\left|w+\half \frac{1-t}{2t}\right|=\epsilon} \frac{t\sqrt{2t-1} w^{m-n}(1-t)^m}{2(-1+t+ 2t w)^m (w-c_1) }  =-\frac{1}{4} t \sqrt{2t-1} \left(\frac{t+\sqrt{2t-1}}{1-t} \right)^{m+n}   \label{par:crit2contr1}
\)  
and
\(
	-\frac{1}{2 \pi i} \int_{\left|w+\half \frac{1-t}{2t}\right|=\epsilon} \frac{t\sqrt{2t-1} w^{m-n}(1-t)^m}{2(-1+t+ 2t w)^m (w-c_2) }=\frac{1}{4} t \sqrt{2t-1}  \left(\frac{t-\sqrt{2t-1}}{1-t} \right)^{m+n}.  \label{par:crit2contr2}
\)
Notice that contributions from (\ref{par:crit2contrib}),(\ref{par:crit2contr1}) and (\ref{par:crit2contr2}) sum up to give (\ref{par:critlemma}).  As the contributions from (\ref{par:crit2contrib}) and (\ref{par:crit2contr1}) cancel out, we obtain
\(
	H_{u_i}(m,-n)=\frac{1}{4} t \sqrt{2t-1}  \left(\frac{t-\sqrt{2t-1}}{1-t} \right)^{m+n}.
\) 
Setting $t=(1+\sqrt{1-x^2})/x^2$ gives 
\begin{equation} \nonumber
    H_{u_i}(m,-n)=\frac{(1+\sqrt{1-x^2})^2}{4x^3} \left( \frac{(-1+x)(1+\sqrt{1-x^2})}{1-x^2+\sqrt{1-x^2}} \right)^{m+n}
\end{equation}
which can be simplified to the expression given in Lemma~\ref{cor:lemmacritical2}.  
\end{proof}

\end{appendix}

\end{document}